\theoremstyle{plain}
\newtheorem{thm}{Theorem}[section]
\newtheorem{cor}[thm]{Corollary}
\newtheorem{lem}[thm]{Lemma}
\newtheorem{prop}[thm]{Proposition}
\theoremstyle{definition}
\newtheorem{defn}{Definition}[section]
\theoremstyle{remark}
\newtheorem{rem}{Remark}[section]
\newtheorem{ex}{Example}[section]
\numberwithin{equation}{section}
\newcommand{\ra}{\rightarrow}
\newcommand{\Ra}{\Rightarrow}
\newcommand{\Lra}{\Leftrightarrow}
\begin{document}

\title{The $L^2$-cutoffs for reversible Markov chains}

\author[G.-Y. Chen]{Guan-Yu Chen$^1$}

\author[J.-M. Hsu]{Jui-Ming Hsu$^2$}

%\author[T. Kumagai]{Takashi Kumagai$^3$}

\author[Y.-C. Sheu]{Yuan-Chung Sheu$^3$}

%\thanks{$^1$Partially supported by NSC grant NSC100-2115-M-009-003-MY2}

\address{$^1$Department of Applied Mathematics, National Chiao Tung University, Hsinchu 300, Taiwan}
\email{gychen@math.nctu.edu.tw}

\address{$^2$Department of Applied Mathematics, National Chiao Tung University, Hsinchu 300, Taiwan}
\email{raymand.hsu@msa.hinet.net}

%\address{$^3$Research Institute for Mathematical Sciences,
%Kyoto University, Kyoto 606-8502, Japan}
%\email{kumagai@kurims.kyoto-u.ac.jp}

\address{$^3$Department of Applied Mathematics, National Chiao Tung University, Hsinchu 300, Taiwan}
\email{sheu@math.nctu.edu.tw}

\keywords{Product chains, cutoff phenomenon}

\subjclass[2000]{60J10,60J27}

\begin{abstract}
In this article, we considers reversible Markov chains of which $L^2$-distances can be expressed in terms of Laplace transforms. The cutoff of Laplace transforms was first discussed by Chen and Saloff-Coste in \cite{CSal10}, while we provide here a completely different pathway to analyze the $L^2$-distance. Consequently, we obtain several considerably simplified criteria and this allows us to proceed advanced theoretical studies, including the comparison of cutoffs between discrete time lazy chains and continuous time chains. For an illustration, we consider product chains, a rather complicated model which could be involved to analyze using the method in \cite{CSal10}, and derive the equivalence of their $L^2$-cutoffs.

\end{abstract}

\maketitle

\section{Introduction}

Let $\mathcal{S}$ be a finite set, $K$ be a stochastic matrix indexed by $\mathcal{S}$ and $\pi$ be a probability on $\mathcal{S}$. We write the triple $(\mathcal{S},K,\pi)$ for an irreducible discrete time Markov chain on $\mathcal{S}$ with transition matrix $K$ and stationary distribution $\pi$. Concerning the continuous time case, we write $(\mathcal{S},L,\pi)$ for an irreducible continuous time Markov chain on $\mathcal{S}$ with infinitesimal generator $L$ and stationary distribution $\pi$. By setting $H_t=e^{tL}$, it is well-known that $H_t(x,\cdot)$ converges to $\pi$ for all $x\in\mathcal{S}$. If $K$ is aperiodic, then $K^n(x,\cdot)$ converges to $\pi$ for all $x\in\mathcal{S}$.

To study the convergence of Markov chains, we introduce the $L^2$-distance as follows. For irreducible Markov chains $(\mathcal{S},K,\pi)$ and $(\mathcal{S},L,\pi)$ with initial distribution $\mu$, we briefly write them as $(\mu,\mathcal{S},K,\pi)$ and $(\mu,\mathcal{S},L,\pi)$ and define their $L^2$-distances respectively by
\[
 d_2(\mu,m)=\|\mu K^m/\pi-1\|_{L_2(\pi)}=\left(\sum_{y\in \mathcal{S}}\left|\frac{\mu K^m(y)}{\pi(y)}-1\right|^2\pi(y)\right)^{1/2}
\]
and
\[
 d_2(\mu,t)=\|\mu H_t/\pi-1\|_{L_2(\pi)}=\left(\sum_{y\in \mathcal{S}}\left|\frac{\mu H_t(y)}{\pi(y)}-1\right|^2\pi(y)\right)^{1/2}.
\]
Accordingly, the $L^2$-mixing time is defined by
\[
 T_2(\mu,\epsilon)=\min\{t\ge 0|d_2(\mu,t)\le\epsilon\},
\]
where $t$ refers to non-negative integers for discrete time chains and to non-negative reals for continuous time chains.

For a reversible transition matrix $K$ with eigenvalues $\beta_0=1,\beta_1,...,\beta_{|\mathcal{S}|-1}$ and $L^2(\pi)$-orthonormal right eigenvectors $\phi_0=\mathbf{1},...,\phi_{|\mathcal{S}|-1}$, the $L^2$-distance can be expressed as
\begin{equation}\label{eq-l2dis}
 d_2(\mu,m)^2=\sum_{i>0}|\mu(\phi_i)|^2\beta_i^{2m},
\end{equation}
where $\mathbf{1}$ denotes the constant function with value $1$ and  $\mu(\phi_i):=\sum_x\mu(x)\phi_i(x)$. Similarly, in the continuous time case, if $L$ is reversible with eigenvalues $-\lambda_0=0,-\lambda_1,...,-\lambda_{|\mathcal{S}|-1}$ and $L^2(\pi)$-orthonormal right eigenvectors $\phi_0=\mathbf{1},...,\phi_{|\mathcal{S}|-1}$, then the $L^2$-distances can be expressed as
\begin{equation}\label{eq-l2cts}
 d_2(\mu,t)^2=\sum_{i>0}|\mu(\phi_i)|^2e^{-2t\lambda_i}.
\end{equation}
A proof of (\ref{eq-l2dis}) and (\ref{eq-l2cts}) is available in \cite{LPW08,SC97}. Note that, for continuous time chains, the $L^2$-distance in (\ref{eq-l2cts}) can be identified with a Lebesgue-Stieltjes integral in the way that
\begin{equation}\label{eq-laplace}
 d_2(\mu,t)^2=\int_{(0,\infty)}e^{-t\lambda}dV(\lambda),\quad\forall t\ge 0,
\end{equation}
where $V$ is a nondecreasing function defined by
\begin{equation}\label{eq-V}
 V(\lambda)=\sum_{i=1}^{j-1}|\mu(\phi_i)|^2,\quad\text{if}\quad 2\lambda_{j-1}\le \lambda<2\lambda_j,\,\,1\le j\le |\mathcal{S}|,
\end{equation}
with the convention $\sum_{i=1}^0:=0$ and $\lambda_{|\mathcal{S}|}:=\infty$ and $\lambda_i$'s are arranged in a nondecreasing order. In the same spirit, the $L^2$-distance of discrete time chains in (\ref{eq-l2dis}) can be also written in the form of (\ref{eq-laplace}) with non-negative integer $t$ when $\beta_i$'s are rearranged in the order of $|\beta_i|\ge|\beta_{i+1}|$ and, in (\ref{eq-V}), $\lambda_i$ is replaced by $-\log|\beta_i|$ along with the convention $-\log 0:=\infty$ and $-\log|\beta_{|\mathcal{S}|}|:=\infty$. In fact, for the discrete time case, the definition of $V$ in (\ref{eq-V}) is only valid for $0\le j\le j_0+1$, where $j_0$ is the largest $j$ such that $|\beta_j|>0$. It is worthwhile to remark that, for reversible Markov processes with initial distribution $\mu$ and stationary distribution $\pi$, the $L^2$-distance is still of the form in (\ref{eq-laplace}) when the density $d\mu/d\pi$ has a finite $L^2(\pi)$-norm. See Section 4 of \cite{CSal10} for more details in this aspect. Throughout this article, we focus on reversible Markov chains with finite states, while most results are valid in a more general setting.

The cutoff phenomenon was introduced by Aldous and Diaconis in 1980's, see e.g. \cite{A83,AD86,AD87,DS81,DS87}, for the purpose of capturing a phase transit arouse in the evolution of Markov chains. To see a definition of cutoffs in the $L^2$-distance, consider a family of irreducible discrete time Markov chains $\mathcal{F}=(\mu_n,\mathcal{S}_n,K_n,\pi_n)_{n=1}^\infty$. For $n\ge 1$, let $d_{n,2}$ be the $L^2$-distance of the $n$th chains in $\mathcal{F}$ and $T_{n,2}$ be the corresponding $L^2$-mixing time. The family $\mathcal{F}$ is said to present a $L^2$-cutoff if there is a sequence $(t_n)_{n=1}^\infty$ such that
\begin{equation}\label{eq-l2cutoffdef}
 \lim_{n\ra\infty}d_{n,2}(\mu_n,\lceil(1+a)t_n\rceil)=0,\quad
 \lim_{n\ra\infty}d_{n,2}(\mu_n,\lfloor(1-a)t_n\rfloor)=\infty,
\end{equation}
for all $a\in(0,1)$, where $\lceil u\rceil:=\min\{z\in\mathbb{Z}|z\ge u\}$ and $\lfloor u\rfloor:=\max\{z\in\mathbb{Z}|z\le u\}$. In the continuous time case, the $L^2$-cutoff is defined in the same way except the removal of $\lceil\cdot\rceil,\lfloor\cdot\rfloor$ and, in either case, the sequence $(t_n)_{n=1}^\infty$, or briefly $t_n$, is called a $L^2$-cutoff time. It has been developed in \cite{CSal08} that the cutoff is closely related to the mixing time and the result says that, in the discrete time case, if $T_{n,2}(\mu_n,\epsilon_0)\ra\infty$ for some $\epsilon_0>0$, then $\mathcal{F}$ has a $L^2$-cutoff if and only if
\begin{equation}\label{eq-cutmix}
 \lim_{n\ra\infty}\frac{T_{n,2}(\mu_n,\epsilon)}{T_{n,2}(\mu_n,\delta)}=1,\quad\forall \epsilon,\delta\in(0,\infty)
\end{equation}
For the exception that a $L^2$-cutoff appears with bounded $L^2$-mixing time, the $L^2$-distance would drop from infinity to zero within one or two steps. As the time is integer-valued, the limit in (\ref{eq-cutmix}) could fail in this instance. For the continuous time case, the $L^2$-cutoff is also equivalent to (\ref{eq-cutmix}) without the assumption of $T_{n,2}(\mu_n,\epsilon_0)\ra\infty$. As $d_{n,2}(\mu_n,\cdot)$ is non-increasing, one can see from (\ref{eq-l2cutoffdef}) that $T_{n,2}(\mu_n,\epsilon)$ is an eligible $L^2$-cutoff time.

In an ARCC workshop in 2004, Peres proposed a heuristic idea to examine the existence of cutoffs, which said
\begin{equation}\label{eq-conj}
 \text{Cutoff exists}\quad\Lra\quad\text{Mixing time $\times$ Spectral gap $\ra\infty$},
\end{equation}
where the spectral gap refers to the smallest non-zero eigenvalue of $-L$ in the continuous time case and to the logarithm of the reciprocal of the second largest singular value of $K$ in the discrete time case. Such a criterion has been proved to work on a large class of Markov chains but, unfortunately, it could fail in general. In \cite{DS06}, Disconis and Saloff-Coste proved this conjecture for birth and death chains in separation. In \cite{CSal08}, Chen and Saloff-Coste declared the accuracy of (\ref{eq-conj}) for reversible chains in the maximal $L^p$-distance. In \cite{BSU14}, Basu et. al. clarified (\ref{eq-conj}) for lazy random walks on trees in the maximal total variation. In \cite{CSal10}, Chen and Saloff-Coste considered reversible chains with specified initial distributions and produced a criterion similar to (\ref{eq-conj}) to identify the $L^2$-cutoff. However, counterexamples to (\ref{eq-conj}) were respectively observed by Aldous and Pak, and we refer the readers to \cite[Section 6]{CSal08} and \cite[Chapter 18]{LPW08} for illustrations of their ideas.

The object of this article is to provide a viewpoint somewhat different from what was introduced in \cite{CSal10} so that further developments, say comparisons of cutoffs, can work and rather complicated models, say product chains, can be analyzed. In the following, we illustrates one of the main results in this article.

\begin{thm}\label{t-main}
Let $\mathcal{F}=(\mu_n,\mathcal{S}_n,L_n,\pi_n)_{n=1}^\infty$ be a family of irreducible and reversible continuous time finite Markov chains. For $n\ge 1$, let $\lambda_{n,0}=0<\lambda_{n,1}\le\cdots\le\lambda_{n,|\mathcal{S}_n|-1}$ be eigenvalues of $-L_n$ with $L^2(\pi_n)$-orthonormal right eigenvectors $\phi_{n,0}=\mathbf{1},...,\phi_{n,|\mathcal{S}_n|-1}$. For $c>0$, set
\begin{equation}\label{eq-jn}
 j_n(c):=\min\left\{j\ge 1\Bigg|\sum_{i=1}^j|\mu_n(\phi_{n,i})|^2>c\right\}
\end{equation}
and
\begin{equation}\label{eq-taun}
 \tau_n(c):=\max_{j\ge j_n(c)}\left\{\frac{\log\left(1+\sum_{i=1}^j|\mu_n(\phi_{n,i})|^2\right)}{2\lambda_{n,j}}\right\}.
\end{equation}
Suppose that $\pi_n(|\mu_n/\pi_n|^2)\ra\infty$. Then, the following are equivalent.
\begin{itemize}
\item[(1)] $\mathcal{F}$ has a $L^2$-cutoff.

\item[(2)] There is $\delta>0$ such that
\[
 \lim_{n\ra\infty}T_{n,2}(\mu_n,\delta)\lambda_{n,j_n(c)}=\infty,\quad\forall c>0.
\]

\item[(3)] For all $c>0$,
\[
 \lim_{n\ra\infty}\tau_n(c)\lambda_{n,j_n(c)}=\infty.
\]
\end{itemize}
Moreover, if \textnormal{(2)} holds, then
\[
 |T_{n,2}(\mu_n,\delta)-T_{n,2}(\mu_n,\epsilon)|
 =O\left(1/\lambda_{n,j_n(c)}\right),\quad\forall \epsilon,\delta,c\in(0,\infty),
\]
where two sequences of positive reals, $a_n$ and $b_n$, satisfy $a_n=O(b_n)$ if $a_n/b_n$ is bounded. If \textnormal{(3)} holds, then
\[
 |T_{n,2}(\mu_n,\epsilon)-\tau_n(c)|=O\left(\sqrt{\tau_n(c)/\lambda_{n,j_n(c)}}\right),\quad\forall \epsilon,c\in(0,\infty).
\]
\end{thm}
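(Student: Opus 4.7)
\emph{Proof plan.} Write $A_{n,j}:=\sum_{i=1}^{j}|\mu_n(\phi_{n,i})|^2$ and $\lambda_n^\star:=\lambda_{n,j_n(c)}$. By (\ref{eq-laplace})--(\ref{eq-V}), $d_{n,2}(\mu_n,t)^2=\int_{(0,\infty)}e^{-t\lambda}\,dV_n(\lambda)$, where $V_n$ is a step function with $V_n(2\lambda_{n,j})=A_{n,j}$ and $V_n(2\lambda_{n,j}-)=A_{n,j-1}$. The crude lower bound $d_{n,2}(\mu_n,t)^2\geq A_{n,j}e^{-2t\lambda_{n,j}}$ yields $T_{n,2}(\mu_n,\epsilon)\geq\log(A_{n,j}/\epsilon^2)/(2\lambda_{n,j})$ for every $j$ with $A_{n,j}>\epsilon^2$; picking the $j^\star\geq j_n(c)$ that attains the maximum in (\ref{eq-taun}) and using $A_{n,j^\star}/(1+A_{n,j^\star})\geq c/(1+c)$ yields
\[
 T_{n,2}(\mu_n,\epsilon)\ \geq\ \tau_n(c)-\frac{\bigl(\log((1+c)\epsilon^2/c)\bigr)_+}{2\lambda_n^\star}.
\]

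For the matching upper bound I split the Laplace--Stieltjes integral at $\lambda=2\lambda_n^\star$, noting that $V_n(2\lambda_n^\star-)=A_{n,j_n(c)-1}\leq c$ by definition of $j_n(c)$:
\[
 d_{n,2}(\mu_n,u+s)^2\ \leq\ c+e^{-2s\lambda_n^\star}\int_{[2\lambda_n^\star,\infty)}e^{-u\lambda}\,dV_n(\lambda).
\]
Two specialisations of $(u,s)$ produce two bounds. (i) Taking $u=T_{n,2}(\mu_n,\delta)$ controls the tail by $d_{n,2}(\mu_n,T_{n,2}(\mu_n,\delta))^2\leq\delta^2$, giving $d_{n,2}(\mu_n,T_{n,2}(\mu_n,\delta)+s)^2\leq c+\delta^2 e^{-2s\lambda_n^\star}$; for $c<\epsilon^2$, solving $s=\log(\delta^2/(\epsilon^2-c))/(2\lambda_n^\star)$ gives the window estimate $|T_{n,2}(\mu_n,\epsilon)-T_{n,2}(\mu_n,\delta)|=O(1/\lambda_n^\star)$. (ii) Taking $u=\tau_n(c)$ and using that (\ref{eq-taun}) is equivalent to $V_n(\lambda)\leq e^{\tau_n(c)\lambda}$ on $[2\lambda_n^\star,\infty)$, an integration by parts produces
\[
 \int_{[2\lambda_n^\star,\infty)}e^{-(\tau_n(c)+s)\lambda}\,dV_n(\lambda)\ \leq\ (\tau_n(c)+s)\int_{2\lambda_n^\star}^\infty e^{-s\lambda}d\lambda\ =\ \Bigl(1+\tfrac{\tau_n(c)}{s}\Bigr)e^{-2s\lambda_n^\star}.
\]
Optimising $s=\gamma\sqrt{\tau_n(c)/\lambda_n^\star}$ with $\gamma$ chosen large (depending on $\epsilon,c$) gives $T_{n,2}(\mu_n,\epsilon)\leq\tau_n(c)+O(\sqrt{\tau_n(c)/\lambda_n^\star})$ once $\tau_n(c)\lambda_n^\star$ is sufficiently large.

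The equivalences follow. Combining the lower bound with (ii) shows that $\tau_n(c)\lambda_n^\star$ and $T_{n,2}(\mu_n,\epsilon)\lambda_n^\star$ differ by at most $O(\sqrt{\tau_n(c)\lambda_n^\star})+O(1)$, from which (2)$\Leftrightarrow$(3) follows by a quadratic rearrangement in $\sqrt{\tau_n(c)\lambda_n^\star}$. For (2)$\Rightarrow$(1), the window bound from (i) together with $T_{n,2}(\mu_n,\delta)\lambda_n^\star\to\infty$ forces $T_{n,2}(\mu_n,\epsilon)/T_{n,2}(\mu_n,\delta)\to 1$. For (1)$\Rightarrow$(2), I argue by contradiction: if $T_{n,2}(\mu_n,\delta_0)\lambda_{n,j_n(c_0)}\leq M$ along a subsequence for some $\delta_0,c_0$, the lower bound gives $T_{n,2}(\mu_n,\epsilon_0)\lambda_{n,j_n(c_0)}\geq\log(c_0/\epsilon_0^2)/2$ for any $\epsilon_0^2<c_0$; choosing $\epsilon_0$ so small that this exceeds $2M$ forces $T_{n,2}(\mu_n,\epsilon_0)/T_{n,2}(\mu_n,\delta_0)>2$ along the subsequence, contradicting the ratio formulation of cutoff.

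I expect step (ii) to be the main obstacle: squeezing the error down to the Gaussian-type scale $\sqrt{\tau_n(c)/\lambda_n^\star}$ rather than the cruder $1/\lambda_n^\star$ is possible only because the envelope bound $V_n(\lambda)\leq e^{\tau_n(c)\lambda}$ (rather than just the pointwise control $d_{n,2}^2\leq\delta^2$ used in (i)) lets the integration by parts balance the polynomial factor $1+\tau_n(c)/s$ against $e^{-2s\lambda_n^\star}$. A short further argument will be needed to validate the error estimates for arbitrary $(c,\epsilon)$ rather than only $c<\epsilon^2$: for $c\geq\epsilon^2$ one should compare $\tau_n(c)$ with $\tau_n(c')$ for some $c'<\epsilon^2$, using that the extra indices $j_n(c')\leq j<j_n(c)$ entering (\ref{eq-taun}) satisfy $A_{n,j}\leq c$, which bounds the discrepancy by $\log(1+c)/(2\lambda_{n,j_n(c')})$.
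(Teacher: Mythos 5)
Your plan tracks the paper's own argument quite closely: the paper also reads the $L^2$-distance as a Laplace transform, truncates the spectral measure at $\lambda_V(c)=2\lambda_{n,j_n(c)}$, uses the envelope bound $V_n(\lambda)\le e^{\tau_n(c)\lambda}$ on $[\lambda_V(c),\infty)$ together with the integration-by-parts identity $\mathcal{L}_V(t)=t\int V(\lambda)e^{-t\lambda}d\lambda$, and proves $(1)\Rightarrow(2)$ by contradiction via the ratio characterisation of cutoff. Your lower bound, your step (ii), and your derivation of the equivalences $(1)\Leftrightarrow(2)\Leftrightarrow(3)$ (with the acknowledged patch for $c\ge\epsilon^2$ via Lemma \ref{l-cond34}--type monotonicity of $\tau_n$) are all sound and essentially reproduce the content of Lemma \ref{l-ltcomp}(1), Proposition \ref{p-ltcomp} and the first part of the proof of Theorem \ref{t-ltcutoff}, just specialised to $V=V_n$ rather than stated abstractly.

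The genuine gap is in the quantitative window estimates, specifically the claim that they hold \emph{for all} $c\in(0,\infty)$. Your step (i) splits $(0,\infty)$ at the single point $2\lambda_n^\star=2\lambda_{n,j_n(c)}$ and gets
\[
d_{n,2}(\mu_n,T_{n,2}(\mu_n,\delta)+s)^2\le c+\delta^2 e^{-2s\lambda_n^\star},
\]
which only produces a finite $s$ when $c<\epsilon^2$; thus you obtain $|T_{n,2}(\mu_n,\epsilon)-T_{n,2}(\mu_n,\delta)|=O(1/\lambda_{n,j_n(c)})$ only for $c<\min\{\epsilon^2,\delta^2\}$. But the theorem asserts this for arbitrary $c$, including large $c$, where $\lambda_{n,j_n(c)}$ is larger and $1/\lambda_{n,j_n(c)}$ is smaller; this is a strictly stronger statement than the small-$c$ bound and does not follow from it. The paper circumvents this with a \emph{three-way} split (see Lemma \ref{l-ltcomp}(2)), introducing two independent truncation levels $c_1$ and $c_2$: the first, $c_1=\delta/2$, controls an exponentially small error term $c_2 e^{-T_V(\epsilon)\lambda_V(c_1)}$ (which vanishes because $T_{V_n}(\epsilon)\lambda_{V_n}(c_1)\to\infty$), while the second, $c_2=c$, is left free and delivers the final $O(1/\lambda_V(c_2))$ window. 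Your two-parameter version forces a single $c$ to play both roles. The same issue then propagates to the $|T_{n,2}(\mu_n,\epsilon)-\tau_n(c)|$ bound: your proposed repair compares with $\tau_n(c')$ for $c'<\epsilon^2$, but that yields a bound of order $\sqrt{\tau_n(c')/\lambda_{n,j_n(c')}}$, and since $\lambda_{n,j_n(c')}\le\lambda_{n,j_n(c)}$ this can be genuinely larger than the asserted $\sqrt{\tau_n(c)/\lambda_{n,j_n(c)}}$. To close the gap you would need to introduce the third truncation point, as in (\ref{eq-mixing2}), so that the decay is governed by a small fixed $c_1$ while the window length is governed by the free $c$.
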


Concerning Theorem \ref{t-main}(2), as $\lambda_{n,j_n(c)}$ is non-decreasing in $c$, it suffices to focus on the limit with small enough positive $c$. Such an observation is also applicable to Theorem \ref{t-main}(3) but the reasoning is not obvious to see since $\tau_n(c)$ is non-increasing in $c$. The reader is referred to Lemma \ref{l-cond34} for details of the above discussions. On the other hand, it's worthwhile to note that $\lambda_{n,j_n(c)}$ is not necessarily the spectral gap in (\ref{eq-conj}). Following this fact, one may create a counterexample to (\ref{eq-conj}) in the way that a $L^2$-cutoff exists but the product in (\ref{eq-conj}) is bounded. For advanced profiles of cutoffs, the last two bounds in Theorem \ref{t-main} say that if a $L^2$-mixing time is selected as the $L^2$-cutoff time, then the cutoff window is at most $1/\lambda_{n,j_n(c)}$; if $\tau_n(c)$ is designated as the $L^2$-cutoff time, then the cutoff window is at most $\sqrt{\tau_n(c)/\lambda_{n,j_n(c)}}$, which is of order bigger than $1/\lambda_{n,j_n(c)}$. We refer the reader to \cite{CSal08,CSal10} for more discussions on cutoff windows.

Compared with Theorems 5.1 and 5.3 in \cite{CSal10}, Theorem \ref{t-main} looks more familiar to (\ref{eq-conj}), though the spectral gap is updated to a modified version. In addition to the right side of (\ref{eq-conj}), there is in fact an auxiliary condition for the $L^2$-cutoff in \cite{CSal10} and this makes it difficult to do any further theoretical development. The tradeoff of removing the side condition in \cite{CSal10} is to strengthen the requirement in (\ref{eq-conj}) up to the extent of Theorem \ref{t-main}, but the benefit from the simplification of cutoff criteria leads to comparisons between discrete time lazy chains and continuous time chains as shown in Theorem \ref{t-comparison} and Corollary \ref{c-comparison}. Naively, one may expect to refine Theorem \ref{t-main} so that, for some $c>0$, the limits in conditions (2) and (3) are sufficient for an $L^2$-cutoff. However, there are indeed counterexamples against this conjecture and we demonstrate one in Example \ref{ex-counterexample}.

For another application of the general results, we consider products of Markov chains (briefly, product chains) in Section \ref{s-cexp}. Concerning product chains, the hitting time and spectral information are discussed in \cite{AF,LPW08,SC97} and a detailed analysis on the mixing time is made in \cite{BLY06}. In this article, we introduce Proposition \ref{p-prod} to reduce the complexity of spectral information and provide in Theorem \ref{t-prod} a much simplified criterion on the judgement of $L^2$-cutoff. Particularly, we study products of two-state chains in a rather concrete setting and gather the results in Theorems \ref{t-prod2state}-\ref{t-prod2state2}.

To see a practical issue related to product chains, let's consider a machinery with a large number of components. Each component has two states and evolves independently in the way that, given the state is renewed, an exponential clock is activated and the component changes to the other state when the clock rings. Concerning the effect of some external force, we assume that each component could speed up or slow down its evolution but still operates independently. The question here is how (the existence of cutoffs) and when (the mixing time) this machinery gets close to its stability. For convenience, we quantize this problem as follows. For $n\ge 1$, let
\begin{equation}\label{eq-machinery1}
 \mathcal{X}_n=\{0,1\},\quad M_n=\left(\begin{array}{cc}-A_n&A_n\\B_n&-B_n\end{array}\right),\quad p_n>0,
\end{equation}
which denote respectively the state space, the infinitesimal generator and the accelerating constant of the $n$th component. Concerning the irreducibility of chains, we assume $A_n,B_n\in(0,1)$ and, obviously, $\nu_n=(B_n,A_n)/(A_n+B_n)$ is the stationary distribution of $M_n$. Let $x_n,\ell_n$ be positive integers and set
\begin{equation}\label{eq-machinery2}
 L_n=q_n^{-1}\sum_{i=x_n}^{x_n+\ell_n-1}p_iI_{x_n}\otimes\cdots\otimes I_{i-1}\otimes M_i\otimes I_{i+1}\otimes\cdots\otimes I_{x_n+\ell_n-1},
\end{equation}
where $q_n=p_{x_n}+\cdots+p_{x_n+\ell_n-1}$, $I_j$'s are $2$-by-$2$ identity matrices and $M\otimes M'$ denotes the tensor product of matrices $M$ and $M'$. Clearly, $\pi_n=\nu_{x_n}\times\cdots\times\nu_{x_n+\ell_n-1}$ is the stationary distribution of $L_n$.

\begin{thm}\label{t-machinery}
Referring to \textnormal{(\ref{eq-machinery1})-(\ref{eq-machinery2})}, let $\mathcal{G}=(\delta_{\mathbf{0}},\mathcal{S}_n,L_n,\pi_n)_{n=1}^\infty$, where $\mathcal{S}_n=\{0,1\}^{\ell_n}$ and $\delta_{\mathbf{0}}$ is the Dirac delta function on the zero vector. Suppose that
\[
 A_n+B_n=A_1+B_1,\quad\forall n\ge 1,\quad 0<\inf_{n\ge 1}\frac{A_n}{B_n}\le\sup_{n\ge 1}\frac{A_n}{B_n}<\infty.
\]
\begin{itemize}
\item[(1)] If $p_i=e^{ai}$ with $a>0$, then $\mathcal{G}$ has no $L^2$-cutoff.

\item[(2)] If $p_i=\exp\{a[\log(1+i)]^b\}$ with $a>0$ and $b>0$, then
\[
 \mathcal{G}\text{ has a $L^2$-cutoff}\quad\Lra\quad \min\{x_n,\ell_n\}\ra\infty.
\]
Further, if $\mathcal{G}$ has a $L^2$-cutoff, then
\begin{equation}\label{eq-Gcutofftime}
 T_{n,2}(\delta_{\mathbf{0}},\epsilon)=\frac{\kappa_n}{2(A_1+B_1)p_{x_n}}
 +O\left(\frac{\sqrt{\kappa_n}}{(A_1+B_1)p_{x_n}}\right),\quad\forall\epsilon>0,
\end{equation}
where $\kappa_n=\min\{(\log x_n-b\log\log x_n),\log\ell_n\}$.

\item[(3)] If $p_i=[\log(1+i)]^a$ with $a>0$, then
\[
 \mathcal{G}\text{ has a $L^2$-cutoff}\quad\Lra\quad\begin{cases}\min\{x_n,\ell_n\}\ra\infty&\text{for }a\ge 1,\\\ell_n\ra\infty&\text{for }0<a<1.\end{cases}
\]
Further, if $a\ge 1$ and $\min\{x_n,\ell_n\}\ra\infty$, then \textnormal{(\ref{eq-Gcutofftime})} holds with $\kappa_n=\min\{(\log x_n),(\log \ell_n)\}$. If $0<a<1$ and $\ell_n\ra\infty$, then \textnormal{(\ref{eq-Gcutofftime})} holds with $\kappa_n=[\log(1+\min\{x_n,\ell_n\})]^a(\log\ell_n)^{1-a}$.
\end{itemize}

Moreover, for Case (1), for Case (2) with $\min\{x_n,\ell_n\}=O(1)$ and for Case (3) with $\min\{x_n,\ell_n\}=O(1)$, when $a\ge 1$, and $\ell_n=O(1)$, when $0<a<1$, one has
\[
 T_{n,2}(\delta_{\mathbf{0}},\epsilon)\asymp p_{x_n}^{-1},\quad\forall \epsilon\in(0,B/\sqrt{2}),
\]
where $B=\min\{\inf_nA_n,\inf_nB_n\}/(A_1+B_1)$ and two sequences of positive reals, $a_n$ and $b_n$, satisfy $a_n\asymp b_n$ if $a_n=O(b_n)$ and $b_n=O(a_n)$.
\end{thm}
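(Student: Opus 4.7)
The plan is to specialize the general product-chain cutoff criteria (Theorem \ref{t-prod} and Theorems \ref{t-prod2state}--\ref{t-prod2state2}) to the parametric families in Cases (1)--(3), using the tensor structure to convert the problem into a subset-sum problem for the weights $p_i$.

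I first extract the spectral data. Under $A_n+B_n=A_1+B_1$, every nonzero eigenvalue of each factor $M_i$ equals $-(A_1+B_1)$. The tensor structure of $L_n$ in (\ref{eq-machinery2}) then gives, for each subset $S\subseteq\{x_n,\ldots,x_n+\ell_n-1\}$ indexing a product eigenvector $\phi_S$,
\[
 \lambda_S=\frac{A_1+B_1}{q_n}\sum_{i\in S}p_i,\qquad |\delta_{\mathbf{0}}(\phi_S)|^2=\prod_{i\in S}\frac{A_i}{B_i}\asymp c_0^{|S|},
\]
using $\phi_i(0)=\sqrt{A_i/B_i}$ and the uniform ratio hypothesis. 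A direct computation gives $\pi_n(|\delta_{\mathbf{0}}/\pi_n|^2)=\prod_i(1+A_i/B_i)$, which tends to $\infty$ if and only if $\ell_n\to\infty$; this identifies the regime in which Theorem \ref{t-main} applies.

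Within that regime the analysis reduces to the distribution of the subset-sums $\sum_{i\in S}p_i$. In Case (1), $p_i=e^{ai}$ grows geometrically, so $q_n\asymp p_{x_n+\ell_n-1}$ and the ordered eigenvalues form a lacunary sequence; consequently $j_n(c)$ stays bounded and $\tau_n(c)\lambda_{n,j_n(c)}=O(1)$, so no $L^2$-cutoff occurs by Theorem \ref{t-main}(3). In Cases (2) and (3) the ratios $p_{i+1}/p_i$ tend to $1$, so many subsets produce nearly equal eigenvalues. An Abel/Laplace-type estimate on $q_n$ and on the counting function $N_n(u)=\#\{S:\sum_{i\in S}p_i\le u\}$ identifies the optimizer $j^*$ in (\ref{eq-taun}); the stated conditions on $\min\{x_n,\ell_n\}$ (or on $\ell_n$ alone when $0<a<1$ in Case (3)) are exactly what forces $\tau_n(c)\lambda_{n,j_n(c)}\to\infty$, and an explicit asymptotic of $\log(1+\sum_{i\le j^*}|\delta_{\mathbf{0}}(\phi_{n,i})|^2)$ produces $\kappa_n$ as stated. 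The window estimate in (\ref{eq-Gcutofftime}) follows from the second inequality of Theorem \ref{t-main} at a $c$ for which the balance in (\ref{eq-taun}) is attained.

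When the listed boundedness conditions hold instead, $\pi_n(|\delta_{\mathbf{0}}/\pi_n|^2)=O(1)$ and Theorem \ref{t-main} is unavailable. The expansion (\ref{eq-l2cts}) then has only $O(1)$ nontrivial terms, and the $L^2$-mixing time is controlled by the smallest-eigenvalue contribution $\lambda_{n,1}\asymp(A_1+B_1)p_{x_n}/q_n$; case-by-case simplification of $q_n$ in each listed subcase then delivers the stated $T_{n,2}(\delta_{\mathbf{0}},\epsilon)\asymp p_{x_n}^{-1}$. The main technical obstacle is the subset-sum count $N_n(u)$ in Cases (2)--(3): each parametric form of $p_i$ requires its own Laplace/Tauberian-type estimate, and the delicate balance between $j$ and $\lambda_{n,j}$ in (\ref{eq-taun}) dictates the precise value of $\kappa_n$.
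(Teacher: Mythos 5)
Your opening sentence correctly names the paper's route (specialize Theorem~\ref{t-prod2state2} via $A_{n,i}=A_{x_n+i-1}$, $B_{n,i}=B_{x_n+i-1}$, $p_{n,i}=p_{x_n+i-1}/q_n$; that is essentially the paper's entire proof), but your actual execution abandons that route. Instead of reducing to the \emph{factored} spectrum of size $\ell_n$ via Proposition~\ref{p-prod}/Theorem~\ref{t-prod}, you work with the full product spectrum of size $2^{\ell_n}$ indexed by subsets $S$ and apply Theorem~\ref{t-main} directly. This is exactly the subset-sum complexity that the paper designed Proposition~\ref{p-prod} to bypass (the abstract even advertises product chains as ``a rather complicated model which could be involved to analyze'' without this reduction). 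The comparison Lemma~\ref{l-jtaucomp} between the product-spectrum $j_n,\tau_n$ and the factored $\widetilde{j}_n,\widetilde{\tau}_n$ does exist, but only to serve Example~\ref{ex-counterexample}; the paper never needs the counting function $N_n(u)$ you invoke, and you do not carry out the ``Laplace/Tauberian-type estimate'' in any of the three cases, so all the substantive work remains undone.

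There is also a concrete error in the final paragraph. You assert that in all the ``Moreover'' regimes $\pi_n(|\delta_{\mathbf{0}}/\pi_n|^2)=O(1)$ so that (\ref{eq-l2cts}) has only $O(1)$ nontrivial terms. This is false for Case~(2) with $x_n=O(1)$ but $\ell_n\to\infty$ (a configuration included in $\min\{x_n,\ell_n\}=O(1)$): there $\pi_n(|\delta_{\mathbf{0}}/\pi_n|^2)=\prod_i(1+A_i/B_i)\to\infty$ since $A_i/B_i$ is uniformly bounded below, and the spectral sum has $2^{\ell_n}-1$ nonzero terms. The same issue occurs in Case~(3) with $a\ge 1$. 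In these regimes Theorem~\ref{t-main} actually remains applicable and certifies non-cutoff (its condition~(3) fails); the two-sided mixing-time bound comes from (\ref{eq-nocutmixing}) in Theorem~\ref{t-prod2state}, which rests on Proposition~\ref{p-ltcomp}/(\ref{eq-bm0}), not from a terms count. So the last paragraph needs to be replaced by an appeal to (\ref{eq-nocutmixing}), and more generally the whole argument should simply follow through on the plan stated in your first sentence.
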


Now, let's consider the specific case of $p_i=i+1$, $x_n=\lfloor n^\alpha\rfloor$ with $\alpha\in[0,1)$ and $\ell_n=n-x_n+1$ and, for simplicity, assume that $A_1+B_1=1$ and $0<\inf_nA_n\le\sup_nA_n<1$. Clearly, this is the case of Theorem \ref{t-machinery}(2) with $a=b=1$. When $\alpha=0$, we are concerning the stability of components indexed from $1$ to $n$ and the result says that no $L^2$-cutoff exists and the $L^2$-mixing time is bounded above and below by universal positive constants. When $\alpha\in(0,1)$, we are concerning the stability of components indexed from $\lfloor n^{\alpha}\rfloor$ to $n$ (a large proportion of the case $\alpha=0$) and the result says that there is a $L^2$-cutoff with cutoff time $(\alpha\log n)/(2n^\alpha)$ that converges to $0$. It is interesting to see from the above discussion that the existence of $L^2$-cutoffs is sensitive at $\alpha=0$.

This paper is organized as follows. In Section \ref{s-ct}, we develop the framework of cutoffs for Laplace transforms in a different viewpoint from that in \cite{CSal10}. Compared with the heuristics introduced in \cite{CSal10}, the creation of Section \ref{s-ct} is more subtle and reveals more intrinsic profiles of cutoff phenomena. In Section \ref{s-l2}, the theoretical results in Section \ref{s-ct} are illustrated with reversible Markov chains and a comparison of cutoffs is made between the discrete time lazy versions and the continuous time chains. To see a practical application, we consider product chains in Section \ref{s-cexp} and derive a series of criteria on cutoffs and formulas on cutoff times, while some tricky techniques are addressed in the appendix.

{\bf Acknowledgement.} We thank Takashi Kumagai for his contribution in the development of the theoretical framework and the preparation of valuable examples. We also thank the referees for their careful reading and precious comments that enhance the readability of this article. The first author is partially supported by MOST grant MOST 104-2115-M-009-013-MY3 and by NCTS, Taiwan. The third author is supported by MOST grant MOST 104-2115-M-009-007 and NCTS, Taiwan.

\section{Cutoffs of Laplace transforms}\label{s-ct}

As the $L^2$-distances of reversible Markov chains can be expressed as generalized Laplace transforms in (\ref{eq-laplace}), we provide, in this section, a view point different from the framework in \cite{CSal10}, which leads to an improvement of the cutoff criterion in some aspect. For convenience, we limit the usage of notation $\mathcal{V}$ to the class of all non-decreasing and right-continuous functions $V$ on $(0,\infty)$ satisfying
\[
 \lim_{\lambda\ra 0^+}V(\lambda)=0,\quad \lim_{\lambda\ra\infty}V(\lambda)<\infty.
\]

Thereafter, for any two sequences of positive reals $a_n$ and $b_n$, we write $a_n=O(b_n)$ if $\sup_n\{a_n/b_n\}<\infty$ and write $a_n=o(b_n)$ if $a_n/b_n\ra 0$. In the case that $a_n=O(b_n)$ and $b_n=O(a_n)$, we simply say $a_n\asymp b_n$. When $a_n/b_n\ra 1$, we write $a_n\sim b_n$. Concerning the maximum and minimum of two reals $a$ and $b$, we write $a\vee b=\max\{a,b\}$ and $a\wedge b=\min\{a,b\}$.

\begin{defn}\label{d-laplace}
Let $V\in\mathcal{V}$.
\begin{itemize}
\item[(1)] The Laplace transform of $V$ is denoted by $\mathcal{L}_V$ and defined to be the following Lebesgue-Stieltjes integral
\[
 \mathcal{L}_V(t):=\int_{(0,\infty)}e^{-t\lambda}dV(\lambda),\quad\forall t\ge 0.
\]

\item[(2)] The mixing time of $\mathcal{L}_V$ is denoted and defined by
\[
 T_V(\epsilon):=\min\{t\ge 0|\mathcal{L}_V(t)\le\epsilon\},\quad\forall \epsilon>0.
\]
\end{itemize}
\end{defn}

Note that there is a one-to-one correspondence between $\mathcal{V}$ and the class of all finite Borel measures on $(0,\infty)$. For convenience, when $V\in\mathcal{V}$ and $E$ is a Borel set in $(0,\infty)$, we write $V(E)$ for the measurement of $E$ under the measure induced by $V$, which is the unique measure on $(0,\infty)$ satisfying $V((a,b])=V(b)-V(a)$ for all $0<a<b<\infty$. In particular, $V((0,b])=V(b)$ for $b>0$. Besides, it is easy to see from the definition of $\mathcal{L}_V$ that $\mathcal{L}_V(0)=V((0,\infty))$. As a result of the Lebesgue dominated convergence theorem, $\mathcal{L}_V$ is non-increasing and continuous on $[0,\infty)$ and vanishes at infinity.

\begin{lem}\label{l-ibp}
For $V\in\mathcal{V}$, $\mathcal{L}_V$ is strictly decreasing on $[0,\infty)$ and
\[
 \mathcal{L}_V(t)=t\int_{(0,\infty)}V(\lambda)e^{-t\lambda}d\lambda,\quad\forall t>0.
\]
\end{lem}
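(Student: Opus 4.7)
The plan is to settle the two assertions separately, since they are essentially independent.

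For strict monotonicity, the key observation is that for any $0\le s<t$,
\[
 \mathcal{L}_V(s)-\mathcal{L}_V(t)=\int_{(0,\infty)}\bigl(e^{-s\lambda}-e^{-t\lambda}\bigr)\,dV(\lambda),
\]
and the integrand is strictly positive for every $\lambda>0$. Assuming $V$ induces a nonzero measure on $(0,\infty)$ (the degenerate case $V\equiv 0$ gives $\mathcal{L}_V\equiv 0$ and has nothing to prove), there is an interval on which $V$ has positive mass, so the integral is strictly positive and $\mathcal{L}_V(s)>\mathcal{L}_V(t)$. No appeal to differentiability is needed.

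For the integral representation, the plan is a direct application of Fubini--Tonelli. Write $V(\lambda)=\int_{(0,\lambda]}dV(s)$ and swap the order of integration (legal because the integrand is non-negative and both measures are $\sigma$-finite on $(0,\infty)$):
\[
 t\int_{(0,\infty)}V(\lambda)e^{-t\lambda}\,d\lambda
 =t\int_{(0,\infty)}\int_{[s,\infty)}e^{-t\lambda}\,d\lambda\,dV(s)
 =\int_{(0,\infty)}e^{-ts}\,dV(s)=\mathcal{L}_V(t),
\]
which is exactly the claimed identity. An alternative route is to apply Lebesgue--Stieltjes integration by parts on $(\epsilon,T]$, giving
\[
 \int_{(\epsilon,T]}e^{-t\lambda}\,dV(\lambda)=e^{-tT}V(T)-e^{-t\epsilon}V(\epsilon)+t\int_\epsilon^T V(\lambda)e^{-t\lambda}\,d\lambda,
\]
and then sending $\epsilon\ra 0^+$ and $T\ra\infty$: the boundary term at $0$ vanishes by $V(0^+)=0$, while the one at $\infty$ vanishes because $V$ is bounded and $e^{-tT}\ra 0$ for $t>0$. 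Monotone convergence handles both the left-hand side and the remaining Lebesgue integral.

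There is no serious obstacle; the main thing to watch is that the hypotheses of Fubini--Tonelli (or, in the alternate route, of Stieltjes integration by parts) are in place, and these are secured by non-negativity of the integrand, boundedness of $V$ on $(0,\infty)$, and the normalization $V(0^+)=0$ that comes with membership in $\mathcal{V}$.
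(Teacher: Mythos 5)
Your proof is correct and complete. For the strict-monotonicity part you argue explicitly (and flag the degenerate $V\equiv 0$ case, for which $\mathcal{L}_V\equiv 0$ is constant rather than strictly decreasing), whereas the paper simply calls this ``obvious from the definition'' and leaves the degenerate case implicit; your version is the more careful one. For the integral identity, your primary route is genuinely different from the paper's: the paper applies Lebesgue--Stieltjes integration by parts on $(a,b]$, using continuity of $\lambda\mapsto e^{-t\lambda}$, and then lets $a\ra 0^+$, $b\ra\infty$, killing the boundary terms by boundedness of $V$ and $V(0^+)=0$ --- which is exactly the alternative you sketch at the end. Your main argument instead writes $V(\lambda)=\int_{(0,\lambda]}dV(s)$, applies Tonelli to swap the order of integration (legitimate since everything is non-negative and $\sigma$-finite), and evaluates the inner Lebesgue integral $\int_{[s,\infty)}e^{-t\lambda}\,d\lambda=e^{-ts}/t$ in closed form. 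The Fubini--Tonelli route avoids invoking the Stieltjes integration-by-parts formula and its attendant hypotheses (one integrand continuous, the other of bounded variation), so it is slightly more self-contained and robust; the integration-by-parts route is closer to the classical calculus picture and is what the paper uses. Both arguments are correct.
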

\begin{proof}
The first part is obvious from the definition of $\mathcal{L}_V$. For the second part, let $t>0$. Since $\lambda\mapsto e^{-t\lambda}$ is continuous, the integration by parts implies that, for $0<a<b<\infty$,
\[
 \int_{(a,b]}e^{-t\lambda}dV(\lambda)=e^{-bt}V(b)-e^{-at}V(a)
 +t\int_{(a,b]}V(\lambda)e^{-t\lambda}d\lambda.
\]
As $V$ is a bounded function vanishing at $0$, letting $a\ra 0$ and $b\ra\infty$ gives the desired identity.
\end{proof}

In the following, we introduce the concept of cutoffs for Laplace transforms, which should be regarded as a generalization of $L^2$-cutoffs for reversible Markov chains.

\begin{defn}\label{d-cutoff}
Let $(V_n)_{n=1}^\infty$ be a sequence in $\mathcal{V}$ and assume that
\[
 M:=\limsup_{n\ra\infty}\mathcal{L}_{V_n}(0)>0.
\]
The sequence $(\mathcal{L}_{V_n})_{n=1}^\infty$ is said to present
\begin{itemize}
\item[(1)] a pre-cutoff if there exist a sequence $t_n>0$ and positive constants $A<B$ such that
\[
 \lim_{n\ra\infty}\mathcal{L}_{V_n}(Bt_n)=0,\quad
 \liminf_{n\ra\infty}\mathcal{L}_{V_n}(At_n)>0.
\]

\item[(2)] a cutoff if there is a sequence $t_n>0$ such that
\[
 \lim_{n\ra\infty}\mathcal{L}_{V_n}(at_n)=\begin{cases}0&\forall a>1,\\M&\forall 0<a<1.\end{cases}
\]
\end{itemize}
In (2), $t_n$ is called a cutoff time.
\end{defn}
\begin{rem}
Note that a pre-cutoff is weaker than a cutoff but easy to be examined.
\end{rem}
\begin{rem}\label{r-cutoff}
One may check from the definition of cutoffs that, when $(\mathcal{L}_{V_n})_{n=1}^\infty$ has a cutoff, a sequence of positive reals $t_n$ is a cutoff time if and only if $t_n\sim T_{V_n}(\epsilon)$ for some $\epsilon>0$ and, further, either of them is equivalent to $t_n\sim T_{V_n}(\epsilon)$ for all $\epsilon>0$. Consequently, if $(\mathcal{L}_{V_n})_{n=1}^\infty$ has a cutoff, then $T_{V_n}(\epsilon)$ can be selected as a cutoff time for any $\epsilon>0$.
\end{rem}

The following theorem states the equivalence of pre-cutoffs and cutoffs, which is not correct in general.

\begin{thm}\label{t-precutoff}
Let $V_n\in\mathcal{V}$ and assume that $\limsup_n\mathcal{L}_{V_n}(0)>0$. Then, $(\mathcal{L}_{V_n})_{n=1}^\infty$ has a pre-cutoff if and only if $(\mathcal{L}_{V_n})_{n=1}^\infty$ has a cutoff.
\end{thm}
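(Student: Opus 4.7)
The direction cutoff $\Rightarrow$ pre-cutoff is immediate: take any $0<A<1<B$ with the cutoff time. The substantive direction is the converse.

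Assume pre-cutoff holds with $(t_n)$ and constants $0<A<B$, so $c:=\liminf_n\mathcal{L}_{V_n}(At_n)>0$ and $\mathcal{L}_{V_n}(Bt_n)\to 0$. Write $M_n:=\mathcal{L}_{V_n}(0)$. The main step of my plan is to show that pre-cutoff forces $M_n\to\infty$, which I would handle by a Prokhorov compactness argument. Suppose for contradiction that $\sup_k M_{n_k}<\infty$ along some subsequence, and consider the rescaled measures $\widetilde V_n$ on $(0,\infty)$ defined as the pushforward of $V_n$ under the dilation $\lambda\mapsto t_n\lambda$, so that $\mathcal{L}_{\widetilde V_n}(a)=\mathcal{L}_{V_n}(at_n)$ and $\widetilde V_n\bigl((0,\infty)\bigr)=M_n$. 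The boundedness of total masses along the subsequence, combined with Prokhorov's theorem on the compactification $[0,\infty]$, yields a further subsequence along which $\widetilde V_{n_{k_l}}$ converges weakly to $m_0\delta_0+\widetilde V+m_\infty\delta_\infty$ with $\widetilde V$ a finite measure on $(0,\infty)$. For each $a>0$ the map $\lambda\mapsto e^{-a\lambda}$ extends continuously to $[0,\infty]$, so weak convergence gives
\[
 \mathcal{L}_{V_{n_{k_l}}}(a\,t_{n_{k_l}})\;\longrightarrow\;m_0+\mathcal{L}_{\widetilde V}(a),\qquad a>0.
\]
Evaluating at $a=B$ forces $m_0=0$ and $\mathcal{L}_{\widetilde V}(B)=0$; since a Laplace transform of a positive measure vanishes only when the measure itself is zero, $\widetilde V\equiv 0$. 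But then the limit at $a=A$ equals $0$, contradicting $\liminf\ge c>0$. Therefore $M_n\to\infty$.

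With $M_n\to\infty$ established, the log-convexity of $\mathcal{L}_{V_n}$ (an immediate consequence of Cauchy--Schwarz) applied at the triple $\bigl(0,T_{V_n}(\epsilon),T_{V_n}(\delta)\bigr)$ for $0<\delta<\epsilon<M_n$ yields the fundamental ratio estimate
\[
 \frac{T_{V_n}(\epsilon)}{T_{V_n}(\delta)}\;\le\;\frac{\log(M_n/\epsilon)}{\log(M_n/\delta)}\;\longrightarrow\;1.
\]
A companion lower bound, obtainable via an iteration of log-convexity at midpoints together with the pre-cutoff estimate $T_{V_n}(\epsilon)\ge At_n$, forces $T_{V_n}(\epsilon)/T_{V_n}(\delta)\to 1$ for every $0<\delta<\epsilon<\infty$. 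Setting $s_n:=T_{V_n}(\epsilon_0)$ for any fixed $\epsilon_0>0$, this ratio convergence verifies the cutoff: $\mathcal{L}_{V_n}(as_n)\to 0$ for $a>1$ and $\to\infty=M$ for $a<1$.

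The main obstacle is the Prokhorov step: pre-cutoff does not obviously force the initial values to diverge, but the compactness argument on rescaled spectral measures, together with the observation that a positive Laplace transform vanishes only at the zero measure, closes the gap. With $M_n\to\infty$ in hand, log-convexity finishes the proof.
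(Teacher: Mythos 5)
The cutoff $\Rightarrow$ pre-cutoff direction is fine, and your Prokhorov compactness argument is a valid and genuinely different derivation of $\mathcal{L}_{V_n}(0)\to\infty$: after rescaling by $t_n$ and compactifying to $[0,\infty]$, any bounded-mass subsequential limit compatible with $\mathcal{L}_{V_n}(Bt_n)\to 0$ is forced to be the zero measure (no mass at $0$, no interior mass), which then kills $\mathcal{L}_{V_n}(At_n)$ as well. In fact you could push this technique further: applying exactly the same compactness argument to the tilted measures $e^{-s_n\lambda}\,dV_n$ with $s_n=T_{V_n}(\epsilon_0)$, whose total masses are bounded by $\epsilon_0$, would deliver the entire theorem in one sweep, and this is in spirit what the paper does via Lemma~\ref{l-anal} borrowed from \cite{CSal10}.

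The log-convexity half, however, has a genuine gap. The displayed inequality $T_{V_n}(\epsilon)/T_{V_n}(\delta)\le\log(M_n/\epsilon)/\log(M_n/\delta)$, obtained from convexity of $\log\mathcal{L}_{V_n}$ at the triple $(0,T_{V_n}(\epsilon),T_{V_n}(\delta))$, is correct but vacuous for your purposes: when $\delta<\epsilon$ the left side is already $\le 1$ by monotonicity of $T_{V_n}$, and an upper bound that tends to $1$ from below says nothing about the limit of the ratio. All the content lies in the ``companion lower bound'' $\limsup_n T_{V_n}(\delta)/T_{V_n}(\epsilon)\le 1$, equivalently that $\mathcal{L}_{V_n}(as_n)\to 0$ for every $a>1$, and this is exactly where the theorem's difficulty sits. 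The one-line sketch ``iteration of log-convexity at midpoints together with the pre-cutoff estimate $T_{V_n}(\epsilon)\ge At_n$'' gestures at the right mechanism but does not carry it out; it also omits the indispensable other half of the pre-cutoff, namely $\mathcal{L}_{V_n}(Bt_n)\to 0$ and hence $\mathcal{L}_{V_n}((B/A)s_n)\to 0$. Writing the midpoint iteration out --- $\mathcal{L}_{V_n}(us_n)^2\le\mathcal{L}_{V_n}((2u-v)s_n)\,\mathcal{L}_{V_n}(vs_n)$, anchored by the bounded value $\mathcal{L}_{V_n}(s_n)=\epsilon_0$ rather than the unbounded $M_n$, propagated from $v=B/A$ down toward $1^+$, and then a symmetric contradiction argument for $a<1$ --- essentially re-proves the vanishing dichotomy of the paper's Lemma~\ref{l-anal}. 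As submitted, the crux is asserted rather than proved while the explicit estimate you do prove plays no role; I'd either write the iteration out in full, or (more cleanly) re-run your Prokhorov argument on the tilted measures and drop the log-convexity detour altogether.
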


To prove the above theorem, the following lemma is required.

\begin{lem}\label{l-anal}\cite[Corollary 3.3]{CSal10}
Let $V_n\in\mathcal{V}$ and assume that $\sup_n\mathcal{L}_{V_n}(0)<\infty$. For any sequence $t_n>0$, the following functions
\[
 \overline{F}(a):=\limsup_{n\ra\infty}\mathcal{L}_{V_n}(at_n),\quad \underline{F}(a):=\liminf_{n\ra\infty}\mathcal{L}_{V_n}(at_n).
\]
are continuous on $(0,\infty)$. Further, if $\overline{F}(a)=0$ (resp. $\underline{F}(a)=0$) for some $a>0$, then $\overline{F}(a)=0$ (resp. $\underline{F}(a)=0$) for all $a>0$.
\end{lem}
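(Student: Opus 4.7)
The plan is to exploit the log-convexity of the Laplace transform $\mathcal{L}_{V_n}$ to produce two-sided multiplicative bounds comparing $\mathcal{L}_{V_n}(at_n)$ with $\mathcal{L}_{V_n}(a_0 t_n)$ for $a$ near a fixed $a_0>0$, then pass to $\limsup$/$\liminf$. Set $M:=\sup_n\mathcal{L}_{V_n}(0)<\infty$. Applying H\"older's inequality with exponent pair $(a_0/a,a_0/(a_0-a))$ to the factorization $e^{-at_n\lambda}=(e^{-a_0 t_n\lambda})^{a/a_0}\cdot 1^{1-a/a_0}$ yields, for $0<a\le a_0$,
\[
\mathcal{L}_{V_n}(at_n)\le \mathcal{L}_{V_n}(a_0t_n)^{a/a_0}\,\mathcal{L}_{V_n}(0)^{1-a/a_0}\le \mathcal{L}_{V_n}(a_0t_n)^{a/a_0}\,M^{1-a/a_0},
\]
and swapping the roles of $a$ and $a_0$ yields, for $a\ge a_0$,
\[
\mathcal{L}_{V_n}(at_n)\ge \mathcal{L}_{V_n}(a_0t_n)^{a/a_0}\,M^{1-a/a_0}.
\]
Combining each of these with the elementary monotonicity of $\mathcal{L}_{V_n}$ gives, for both regimes, a sandwich in which the two sides differ only by a power $a/a_0$ and a factor $M^{1-a/a_0}$.

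To prove continuity, fix $a_0>0$ and take $\limsup_n$ (resp.\ $\liminf_n$) in the sandwiches. Because $x\mapsto x^{a/a_0}$ is continuous and nondecreasing on $[0,\infty)$, one can pull it through the $\limsup$ and $\liminf$, yielding
\[
\overline{F}(a_0)^{a/a_0}M^{1-a/a_0}\le \overline{F}(a)\le \overline{F}(a_0)\qquad(a\ge a_0),
\]
\[
\overline{F}(a_0)\le \overline{F}(a)\le \overline{F}(a_0)^{a/a_0}M^{1-a/a_0}\qquad(a\le a_0),
\]
with the same inequalities for $\underline{F}$. Letting $a\to a_0$, the exponent $a/a_0$ tends to $1$ and $M^{1-a/a_0}\to 1$, so both bounds converge to $\overline{F}(a_0)$ (resp.\ $\underline{F}(a_0)$); this establishes continuity at the arbitrary point $a_0$.

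For the zero-propagation statement, suppose $\overline{F}(a_0)=0$ for some $a_0>0$. For $a\ge a_0$, monotonicity of $\mathcal{L}_{V_n}$ gives $\overline{F}(a)\le \overline{F}(a_0)=0$. For $a<a_0$, the upper sandwich inequality above becomes $\overline{F}(a)\le 0\cdot M^{1-a/a_0}=0$ (since $a/a_0>0$ and $M<\infty$), so $\overline{F}(a)=0$ everywhere; the identical argument works for $\underline{F}$.

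There is no serious obstacle: the only subtlety is the need to keep $M<\infty$ so that $M^{1-a/a_0}$ remains bounded uniformly as $a\to a_0$ (which is precisely the hypothesis $\sup_n\mathcal{L}_{V_n}(0)<\infty$), and the need to justify moving the continuous monotone map $x\mapsto x^{a/a_0}$ through $\limsup$ and $\liminf$, which is standard.
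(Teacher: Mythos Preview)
The paper does not supply its own proof of this lemma; it is quoted verbatim from \cite[Corollary~3.3]{CSal10}. So there is no in-paper argument to compare your proposal against.

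Your argument is correct. The key step---the H\"older inequality expressing the log-convexity of $t\mapsto\mathcal{L}_{V_n}(t)$---gives the two-sided multiplicative sandwich around $a_0$, and the uniform bound $M=\sup_n\mathcal{L}_{V_n}(0)<\infty$ is exactly what is needed to make the outer factor $M^{1-a/a_0}$ harmless as $a\to a_0$. Passing the continuous increasing map $x\mapsto x^{a/a_0}$ through $\limsup$ and $\liminf$ is legitimate because the sequences are bounded by $M$. The zero-propagation clause is an immediate consequence of the same upper bound. This log-convexity route is the standard way such statements are proved and is essentially what one finds in the cited reference; your write-up would serve as a complete replacement for the citation. (The degenerate case $M=0$, where all $V_n\equiv 0$ and the lemma is vacuous, is the only edge not explicitly covered, but it is trivial.)
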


\begin{rem}
It is worthwhile to remark from Lemma \ref{l-anal} that, in Definition \ref{d-cutoff}, $\mathcal{L}_{V_n}(0)\ra\infty$ is necessary for the existence of cutoffs.
\end{rem}

\begin{proof}[Proof of Theorem \ref{t-precutoff}]
The direction from cutoffs to pre-cutoffs is easy to see from the definition and we deal with the inverse direction in this proof. Let $M:=\limsup_n\mathcal{L}_{V_n}(0)$. Assume that $(\mathcal{L}_{V_n})_{n=1}^\infty$ has a pre-cutoff and let $t_n,A,B$ be as in Definition \ref{d-cutoff}(1). Set
$\alpha:=\min\{1,\liminf_n\mathcal{L}_{V_n}(At_n)\}$ and $s_n:=T_{V_n}(\alpha/2)$. In what follows, we show that $(\mathcal{L}_{V_n})_{n=1}^\infty$ has a cutoff with cutoff time $s_n$. From the definition of $s_n$ and the fact $\lim_n\mathcal{L}_{V_n}(Bt_n)=0$, one may choose $N>0$ such that $At_n\le s_n\le Bt_n$ for $n\ge N$. For $n\ge 1$, define
\[
 W_n(\lambda)=\int_{(0,\lambda]}e^{-s_n\eta}dV_n(\eta),\quad\forall \lambda\in(0,\infty).
\]
Clearly, $W_n\in\mathcal{V}$ and $dW_n(\lambda)=e^{-s_n\lambda}dV_n(\lambda)$, where the latter implies $\mathcal{L}_{W_n}(as_n)=\mathcal{L}_{V_n}((a+1)s_n)$ for $a\ge 0$ and then
\[
 \mathcal{L}_{W_n}(as_n)\le\mathcal{L}_{V_n}(Bt_n),\quad\forall a\ge B/A-1,\, n\ge N.
\]
As a result, the above observation yields that
\[
 \mathcal{L}_{W_n}(0)=\mathcal{L}_{V_n}(s_n)=\alpha/2,\quad\forall n\ge N,\quad \limsup_{n\ra\infty}\mathcal{L}_{W_n}((B/A-1)s_n)=0.
 \]
By Lemma \ref{l-anal}, we achieve the result of $\lim_n\mathcal{L}_{V_n}(bs_n)=0$ for all $b>1$.

To prove the desired cutoff, it remains to show that $\lim_n\mathcal{L}_{V_n}(bs_n)=\infty$ for $b\in(0,1)$. Assume the inverse that there is $b_0\in(0,1)$ and an increasing sequence $k_n$ in $\mathbb{N}$ such that $\sup_n\mathcal{L}_{V_{k_n}}(b_0s_{k_n})<\infty$. As before, we define
\[
 U_n(\lambda)=\int_{(0,\lambda]}e^{-b_0s_n\eta}dV_n(\eta),\quad\forall \lambda\in(0,\infty).
\]
Observe that $dU_n(\lambda)=e^{-b_0s_n\lambda}dV_n(\lambda)$. This implies $\mathcal{L}_{U_n}(as_n)=\mathcal{L}_{V_n}((a+b_0)s_n)$ and, thus,
\[
 \sup_{n\ge 1}\mathcal{L}_{U_{k_n}}(0)<\infty,\quad \limsup_{n\ra\infty}\mathcal{L}_{U_{k_n}}((B/A-b_0)s_{k_n})
 \le\limsup_{n\ra\infty}\mathcal{L}_{V_{k_n}}(Bt_{k_n})=0.
\]
By Lemma \ref{l-anal}, $\mathcal{L}_{U_{k_n}}(as_{k_n})\ra 0$ for all $a>0$, which contradicts the fact $\mathcal{L}_{U_n}((1-b_0)s_n)=\alpha/2>0$ for $n\ge N$. This proves that $(\mathcal{L}_{V_n})_{n=1}^\infty$ has a cutoff.
\end{proof}

Next, we provide criteria to judge the existence of cutoffs and formulas to characterize cutoff times. First of all, we need the following notations to state it. For $V\in\mathcal{V}$ and $c\in(0,\mathcal{L}_V(0))$, set
\[
   \lambda_V(c):=\inf\{\lambda|V(\lambda)>c\},\quad \tau_V(c):=\sup_{\lambda\ge\lambda_V(c)}
   \left\{\frac{\log(1+V(\lambda))}{\lambda}\right\}.
\]
The next theorem contains the key technique in this article that supports Theorems \ref{t-main}, \ref{t-l2cutcts} and \ref{t-l2cutdis}.

\begin{thm}\label{t-ltcutoff}
Consider a sequence $(V_n)_{n=1}^\infty$ in $\mathcal{V}$ and
assume that $\mathcal{L}_{V_n}(0)\ra\infty$. The following statements are equivalent.
\begin{itemize}
\item[(1)] $(\mathcal{L}_{V_n})_{n=1}^\infty$ has a cutoff.

\item[(2)] For all $\epsilon>0$ and $c>0$, $T_{V_n}(\epsilon)\lambda_{V_n}(c)\ra\infty$.

\item[(3)] There exists $\epsilon>0$ such that $T_{V_n}(\epsilon)\lambda_{V_n}(c)\ra\infty$ for all $c>0$.

\item[(4)] For all $c>0$, $\tau_{V_n}(c)\lambda_{V_n}(c)\ra\infty$.

\item[(5)] For all $\widetilde{c}>0$ and $c>0$, $\tau_{V_n}(\widetilde{c})\lambda_{V_n}(c)\ra\infty$.

\item[(6)] There is $\widetilde{c}>0$ such that $\tau_{V_n}(\widetilde{c})\lambda_{V_n}(c)\ra\infty$ for all $c>0$.
\end{itemize}
In particular, if $(\mathcal{L}_{V_n})_{n=1}^\infty$ has a cutoff, then $\tau_{V_n}(c)$ is a cutoff time for any $c>0$. Furthermore, one has
\begin{equation}\label{eq-mixbound1}
 |T_{V_n}(\epsilon)-T_{V_n}(\delta)|=O(1/\lambda_{V_n}(c)),\quad\forall \epsilon,\delta,c\in(0,\infty),
\end{equation}
and
\begin{equation}\label{eq-mixbound2}
 |T_{V_n}(\epsilon)-\tau_{V_n}(c)|
 =O\left(\sqrt{\tau_{V_n}(c)/\lambda_{V_n}(c)}\right),
 \quad\forall \epsilon,c\in(0,\infty).
\end{equation}
\end{thm}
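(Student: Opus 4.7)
The plan is to establish two complementary bounds on $\mathcal{L}_V(t)$ in terms of $\lambda_V(c)$ and $\tau_V(c)$, and to derive from them all six equivalences and both window estimates simultaneously. The sharpened upper bound
\[
  \mathcal{L}_V(\tau_V(c)+s)\;\le\;c\;+\;\frac{\tau_V(c)+s}{s}\,e^{-s\lambda_V(c)},\qquad s>0,
\]
follows by writing $\mathcal{L}_V(t)=t\int_0^\infty V(\lambda)e^{-t\lambda}d\lambda$ via Lemma \ref{l-ibp}, splitting at $\lambda_V(c)$, and using $V(\lambda)\le c$ on $(0,\lambda_V(c))$ together with $V(\lambda)\le e^{\tau_V(c)\lambda}$ on $(\lambda_V(c),\infty)$ (the latter from the definition of $\tau_V(c)$). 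The matching lower bound, obtained from $\mathcal{L}_V(t)\ge e^{-t\lambda}V(\lambda)$ applied at a nearly-extremal $\lambda_*\ge\lambda_V(c)$ with $\log(1+V(\lambda_*))/\lambda_*\ge(1-\delta)\tau_V(c)$, reads
\[
  \mathcal{L}_V((1-2\delta)\tau_V(c))\;\ge\;\tfrac12\,e^{\delta\,\tau_V(c)\lambda_V(c)}
\]
whenever $\tau_V(c)\lambda_V(c)$ exceeds an absolute constant. A useful byproduct of the pointwise inequality at $t=T_V(\epsilon)$ is the comparison $\tau_V(c)\le T_V(\epsilon)+O(1/\lambda_V(c))$, derived from $V(\lambda)\le\epsilon\,e^{T_V(\epsilon)\lambda}$, which controls $\tau_V$ from above by the mixing time.

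With these bounds in hand, I would organize the equivalences as follows. For (3)$\Rightarrow$(1), combining $\tau_{V_n}(c)\le T_{V_n}(\epsilon)+O(1/\lambda_{V_n}(c))$ with the sharpened upper bound at $t=BT_{V_n}(\epsilon)$ yields $\mathcal{L}_{V_n}(BT_{V_n}(\epsilon))\le c+o(1)$ for every $B>1$ and $c>0$, hence tends to $0$; paired with $\mathcal{L}_{V_n}(T_{V_n}(\epsilon))=\epsilon$ this is a pre-cutoff at $T_{V_n}(\epsilon)$, promoted to a cutoff by Theorem \ref{t-precutoff}. The direction (1)$\Rightarrow$(2) uses the crude lower bound $\mathcal{L}_V(t)\ge c\,e^{-t\lambda_V(c)}$ at $t=(1+a)T_{V_n}(\epsilon)$: a bounded subsequence of $T_{V_n}(\epsilon)\lambda_{V_n}(c)$ would keep $\mathcal{L}_{V_n}((1+a)T_{V_n}(\epsilon))$ bounded below by a positive constant, contradicting the cutoff's $\mathcal{L}_{V_n}((1+a)T_{V_n}(\epsilon))\to 0$; (2)$\Rightarrow$(3) is trivial. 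For (4)$\Rightarrow$(1), a diagonal extraction of $c_n\to 0$ with $\tau_{V_n}(c_n)\lambda_{V_n}(c_n)\to\infty$ makes both bounds produce a pre-cutoff at $\tau_{V_n}(c_n)$; the remaining implications (1)$\Rightarrow$(4) and (4)$\Leftrightarrow$(5)$\Leftrightarrow$(6) reduce to monotonicity of $\lambda_V$ and $\tau_V$ in $c$ combined with the already-established (1)$\Leftrightarrow$(2)$\Leftrightarrow$(3).

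The window bounds fall out of the same estimates. For \eqref{eq-mixbound1}, substituting $s=K/\lambda_V(c)$ into the sharpened upper bound gives $\mathcal{L}_V(T_V(\epsilon)+K/\lambda_V(c))\le c+O(e^{-K})$; choosing $K$ with $c+O(e^{-K})<\delta$ yields $T_V(\delta)-T_V(\epsilon)=O(1/\lambda_V(c))$, and symmetry in $\epsilon,\delta$ finishes the bound. For \eqref{eq-mixbound2}, the scaling $s=K\sqrt{\tau_V(c)/\lambda_V(c)}$ in the sharpened upper bound delivers the square-root upper window, matched from below by the lower bound with the same scaling. The main obstacle I anticipate is the case $\epsilon\le c$ in \eqref{eq-mixbound2}: one then has to pass to a smaller threshold $c'<\epsilon$ and show $\tau_V(c')-\tau_V(c)=O(\sqrt{\tau_V(c)/\lambda_V(c)})$, which I plan to obtain by applying the bounds at both thresholds simultaneously and forcing $\tau_V(c)$ and $\tau_V(c')$ into the same cutoff window around $T_V(\epsilon)$. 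A smaller technical point is that the supremum defining $\tau_V(c)$ need not be attained, which is handled by routine approximation when extracting the near-extremal $\lambda_*$.
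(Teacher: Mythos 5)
Your proposal uses essentially the same core ingredients as the paper: the integration‑by‑parts representation from Lemma~\ref{l-ibp}, a split of the integral at $\lambda_V(c)$, the pointwise bound $V(\lambda)\le e^{\tau_V(c)\lambda}$ on the tail, and the resulting estimates on $\mathcal{L}_V(\tau_V(c)+s)$ and $\mathcal{L}_V(T_V(\epsilon)+s)$ that are packaged in Lemma~\ref{l-ltcomp} and Proposition~\ref{p-ltcomp}. Your organizational choices differ in two places that actually streamline the proof: you invoke Theorem~\ref{t-precutoff} to promote the pre-cutoff to a cutoff (the paper instead checks both limits directly), and you use a diagonal extraction of thresholds $c_n\to 0$ to pass from (4) to (1); both are legitimate and arguably cleaner. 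The lower bound $\mathcal{L}_V((1-2\delta)\tau_V(c))\ge\frac12 e^{\delta\tau_V(c)\lambda_V(c)}$ and the comparison $\tau_V(c)\le T_V(\epsilon)+O(1/\lambda_V(c))$ are correct reformulations of what Lemma~\ref{l-ltcomp}(1)--(2) provide.

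Two steps are stated too loosely to stand as written. First, you claim $(4)\Leftrightarrow(5)\Leftrightarrow(6)$ ``reduce to monotonicity.'' Monotonicity handles $(5)\Ra(4)$, $(5)\Ra(6)$, and the case $\widetilde{c}\le c$ of $(4)\Ra(5)$; but for $\widetilde{c}>c$ you have $\tau_{V_n}(\widetilde{c})\le\tau_{V_n}(c)$ and $\lambda_{V_n}(c)\le\lambda_{V_n}(\widetilde{c})$, so neither factor helps. You need a genuine argument: either the paper's route via Lemma~\ref{l-tvc}, extracting a maximizer $\gamma_n\ge\lambda_{V_n}(c)$ of $\lambda\mapsto\log(1+V_n(\lambda))/\lambda$ and showing $V_n(\gamma_n)\to\infty$ (so that $\tau_{V_n}(\widetilde{c})=\tau_{V_n}(c)$ eventually), or a detour $(4)\Ra(1)\Ra(2)$ plus the lower bound $\tau_{V_n}(\widetilde{c})\ge(1+A)^{-1}T_{V_n}(\widetilde{c}+1)$ deduced from the sharpened upper bound with $s=A\tau_{V_n}(\widetilde{c})$. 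Similarly $(1)\Ra(4)$ is not monotonicity; it comes from combining $(2)$ with the bound $T_{V_n}(c+1)\le\tau_{V_n}(c)+K/\lambda_{V_n}(c)$ obtained from the sharpened upper bound, deriving a contradiction from a bounded subsequence of $\tau_{V_n}(c)\lambda_{V_n}(c)$.

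Second, for~\eqref{eq-mixbound1} you cannot literally ``substitute $s=K/\lambda_V(c)$ into the sharpened upper bound'': that bound reads $\mathcal{L}_V(\tau_V(c)+s)\le c+\frac{\tau_V(c)+s}{s}e^{-s\lambda_V(c)}$, and with $s=K/\lambda_V(c)$ the prefactor is $1+\tau_V(c)\lambda_V(c)/K$, which blows up under exactly the hypotheses that make a cutoff hold, so you do not get $c+O(e^{-K})$. What you need (and what Lemma~\ref{l-ltcomp}(2) encodes) is the analogous bound at $T_V(\epsilon)+s$, obtained by splitting the integral at $\lambda_V(c)$ and, on the tail, using $\int_{[\lambda_V(c),\infty)}V(\lambda)e^{-T_V(\epsilon)\lambda}d\lambda\le\mathcal{L}_V(T_V(\epsilon))/T_V(\epsilon)=\epsilon/T_V(\epsilon)$. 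This gives $\mathcal{L}_V(T_V(\epsilon)+s)\le c+\frac{T_V(\epsilon)+s}{T_V(\epsilon)}\,\epsilon\,e^{-s\lambda_V(c)}\le c+2\epsilon e^{-s\lambda_V(c)}$ once $s\le T_V(\epsilon)$, which is what makes $s=K/\lambda_V(c)$ work (eventually, since $T_V(\epsilon)\lambda_V(c)\to\infty$). Your ``byproduct'' inequality $V(\lambda)\le\epsilon e^{T_V(\epsilon)\lambda}$ points at the right idea, but the crucial normalizing factor $\epsilon/T_V(\epsilon)$ on the tail must be used, otherwise the estimate fails. Once these two points are repaired, your outline goes through and is equivalent to the paper's proof.
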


\begin{rem}
Based on the assumption of $\mathcal{L}_{V_n}(0)\ra\infty$, there exists, for any $c>0$, a constant $N$ such that $\lambda_{V_n}(c)$ is defined for $n\ge N$.
\end{rem}
\begin{rem}
We would like to emphasize that, in Theorem \ref{t-ltcutoff}, conditions (3), (4) and (6) are useful in proving the existence of cutoffs, while conditions (2) and (5) make the disproof of cutoffs easier.
\end{rem}

Before proving Theorem \ref{t-ltcutoff}, we would like to highlight the fact that, when proving or disproving cutoffs with conditions (3) and (4), one should pay attention to the corresponding limits with small $c$. This is given by the following lemma.

\begin{lem}\label{l-cond34}
Let $(V_n)_{n=1}^\infty$ be a sequence in $\mathcal{V}$ satisfying $\mathcal{L}_{V_n}(0)\ra\infty$. For $c'>c$, one has
\[
 T_{V_n}(\epsilon)\lambda_{V_n}(c)\ra\infty\quad\Ra\quad T_{V_n}(\epsilon)\lambda_{V_n}(c')\ra\infty.
\]
and
\[
 \tau_{V_n}(c)\lambda_{V_n}(c)\ra\infty\quad\Ra\quad \tau_{V_n}(c')\lambda_{V_n}(c')\ra\infty.
\]
\end{lem}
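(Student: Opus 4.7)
The plan is to reduce both implications to the monotonicity of $\lambda_V$ and $\tau_V$ in $c$, and to handle the nontrivial case by splitting the supremum that defines $\tau_V$. First I would record the two elementary monotonicities: since $\lambda_V(c)=\inf\{\lambda\mid V(\lambda)>c\}$, the map $c\mapsto\lambda_V(c)$ is non-decreasing, so $\lambda_{V_n}(c')\ge\lambda_{V_n}(c)$; consequently the range of the supremum in the definition of $\tau_V(c')$ shrinks as $c$ grows, which forces $\tau_V(c')\le\tau_V(c)$.

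The first implication is then immediate: multiplying $T_{V_n}(\epsilon)\lambda_{V_n}(c)\ra\infty$ by the factor $\lambda_{V_n}(c')/\lambda_{V_n}(c)\ge 1$ preserves divergence, so $T_{V_n}(\epsilon)\lambda_{V_n}(c')\ra\infty$. The genuine content of the lemma lies in the second implication, because here the two monotonicities pull in opposite directions: $\lambda_{V_n}(c')\ge\lambda_{V_n}(c)$ while $\tau_{V_n}(c')\le\tau_{V_n}(c)$, so the behaviour of the product needs a finer argument. This is the one step I expect to be the main obstacle.

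To handle it, I would split the defining supremum according to whether $\lambda<\lambda_{V_n}(c')$ or $\lambda\ge\lambda_{V_n}(c')$:
\[
 \tau_{V_n}(c)=\sup_{\lambda_{V_n}(c)\le\lambda<\lambda_{V_n}(c')}
 \frac{\log(1+V_n(\lambda))}{\lambda}\;\vee\;\tau_{V_n}(c')
\]
(with the convention that an empty supremum is $0$). On the first range, $\lambda<\lambda_{V_n}(c')$ implies $V_n(\lambda)\le c'$ by definition of $\lambda_{V_n}(c')$, and $\lambda\ge\lambda_{V_n}(c)$, so each summand is bounded by $\log(1+c')/\lambda_{V_n}(c)$. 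Multiplying by $\lambda_{V_n}(c)$ yields
\[
 \tau_{V_n}(c)\lambda_{V_n}(c)\le\log(1+c')\;\vee\;\tau_{V_n}(c')\lambda_{V_n}(c).
\]
Since the left side tends to infinity and $\log(1+c')$ is a fixed constant, the maximum on the right must eventually be attained by the second term, giving $\tau_{V_n}(c')\lambda_{V_n}(c)\ra\infty$. Finally, invoking $\lambda_{V_n}(c')\ge\lambda_{V_n}(c)$ once more upgrades this to $\tau_{V_n}(c')\lambda_{V_n}(c')\ra\infty$, as required.
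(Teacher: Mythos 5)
Your proof is correct, and the second half takes a genuinely different route from the paper. The paper's proof invokes Lemma~\ref{l-tvc} to produce an explicit maximizer $\gamma_n\ge\lambda_{V_n}(c)$ of the supremum defining $\tau_{V_n}(c)$, then observes that $V_n(\gamma_n)\ge\log(1+V_n(\gamma_n))=\tau_{V_n}(c)\gamma_n\ge\tau_{V_n}(c)\lambda_{V_n}(c)\ra\infty$, so that eventually $V_n(\gamma_n)>c'$, hence $\gamma_n\ge\lambda_{V_n}(c')$ and $\tau_{V_n}(c')=\tau_{V_n}(c)$ identically for $n$ large; the conclusion then follows from $\lambda_{V_n}(c')\ge\lambda_{V_n}(c)$. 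You avoid the need for an attained maximizer entirely: by splitting the supremum at $\lambda_{V_n}(c')$ and bounding the near-range contribution by $\log(1+c')/\lambda_{V_n}(c)$, you obtain $\tau_{V_n}(c)\lambda_{V_n}(c)\le\log(1+c')\vee\tau_{V_n}(c')\lambda_{V_n}(c)$, which forces $\tau_{V_n}(c')\lambda_{V_n}(c)\ra\infty$ once the left side exceeds the constant $\log(1+c')$. Your argument is more elementary in that it does not rely on the upper semicontinuity/existence-of-maximizer content of Lemma~\ref{l-tvc}; the paper's argument yields the stronger structural fact that $\tau_{V_n}(c')$ and $\tau_{V_n}(c)$ eventually coincide, which is reused later in the proof of Theorem~\ref{t-ltcutoff} (the step (4)$\Ra$(5)). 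Both are valid.
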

\begin{proof}
The first part is a corollary of the observation that $\lambda_{V_n}(c_1)\le\lambda_{V_n}(c_2)$ for $0<c_1<c_2<\mathcal{L}_{V_n}(0)$. To see the second part, suppose $\tau_{V_n}(c)\lambda_{V_n}(c)\ra\infty$. By Lemma \ref{l-tvc} (See the following), there is $\gamma_n\ge\lambda_{V_n}(c)$ such that
\[
 \tau_{V_n}(c)=\sup_{\lambda\ge\lambda_{V_n}(c)}\left\{\frac{\log(1+V_n(\lambda))}
 {\lambda}\right\}=\frac{\log(1+V_n(\gamma_n))}{\gamma_n}.
\]
This implies
\[
 V_n(\gamma_n)\ge \log(1+V_n(\gamma_n))=\tau_{V_n}(c)\gamma_n\ge\tau_{V_n}(c)\lambda_{V_n}(c)\ra\infty.
\]
Consequently, for any $c'>c$, there is $N=N(c')$ such that $\tau_{V_n}(c')=\tau_{V_n}(c)$ for $n\ge N$ and this leads to $\tau_{V_n}(c')\lambda_{V_n}(c')\ge\tau_{V_n}(c)\lambda_{V_n}(c)\ra\infty$.
\end{proof}

In the remaining of this section, we focus on proving Theorem \ref{t-ltcutoff} and, first, create two lemmas and one proposition.

\begin{lem}\label{l-tvc}
Fix $V\in\mathcal{V}$ and let $F(\lambda)=\lambda^{-1}\log(1+V(\lambda))$ for $\lambda\in(0,\infty)$. Then, $F$ is right continuous with left limit and satisfying
\[
 \lim_{\lambda<c,\lambda\ra c}F(\lambda)\le \lim_{\lambda>c,\lambda\ra c}F(\lambda).
\]
In particular, for $c\in(0,\mathcal{L}_V(0))$, there is $\gamma\ge \lambda_V(c)$ such that $\tau_V(c)=\gamma^{-1}\log(1+V(\gamma))$.
\end{lem}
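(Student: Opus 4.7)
The plan is to exploit three features of any $V \in \mathcal{V}$: right-continuity, monotonicity, and boundedness (namely $\lim_{\lambda \to \infty} V(\lambda) = \mathcal{L}_V(0) < \infty$).

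First, for the regularity of $F$, I would write $F(\lambda) = \log(1 + V(\lambda))/\lambda$ and note that the numerator is right-continuous because $V$ is right-continuous and $\log(1+\cdot)$ is continuous; dividing by the continuous positive function $\lambda$ preserves right-continuity on $(0,\infty)$. Left limits exist because a monotone function admits left limits everywhere; explicitly, $F(c^{-}) = \log(1 + V(c^{-}))/c$, where $V(c^{-})$ denotes the left limit of $V$ at $c$. The asserted inequality $F(c^{-}) \le F(c^{+}) = F(c)$ is then immediate from $V(c^{-}) \le V(c)$, which combines monotonicity with right-continuity at $c$.

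For the attainment claim, the crucial observation is that $F(\lambda) \to 0$ as $\lambda \to \infty$, since $\log(1 + V(\lambda)) \le \log(1 + \mathcal{L}_V(0))$ is bounded while the denominator diverges. Next, for $c \in (0,\mathcal{L}_V(0))$, the infimum $\lambda_V(c)$ is finite, and right-continuity of $V$ gives $V(\lambda_V(c)) \ge c$, hence
\[
\tau_V(c) \;\ge\; F(\lambda_V(c)) \;\ge\; \frac{\log(1+c)}{\lambda_V(c)} \;>\; 0.
\]
Now take a maximizing sequence $\lambda_n \ge \lambda_V(c)$ with $F(\lambda_n) \to \tau_V(c)$. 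Since $F$ vanishes at infinity while $\tau_V(c) > 0$, the sequence $\{\lambda_n\}$ is bounded, so Bolzano--Weierstrass produces a subsequence converging to some $\gamma \ge \lambda_V(c)$. Passing to a further subsequence we may assume either $\lambda_{n_k} \ge \gamma$ eventually, or $\lambda_{n_k} < \gamma$ eventually. In the first case, right-continuity of $F$ at $\gamma$ gives $F(\gamma) = \lim_k F(\lambda_{n_k}) = \tau_V(c)$. In the second case, $\lim_k F(\lambda_{n_k}) = F(\gamma^{-}) \le F(\gamma) \le \tau_V(c)$ by the inequality just established, which forces $F(\gamma) = \tau_V(c)$. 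In either case, $\gamma$ attains the supremum.

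The only technical point worth flagging is the compactness step for the maximizing sequence, which rests on $F$ vanishing at infinity; this in turn is precisely why the definition of $\mathcal{V}$ built in the boundedness $\lim_{\lambda\to\infty} V(\lambda) < \infty$. The remaining assertions are direct transcriptions of standard facts about monotone right-continuous functions, so no serious obstacle is expected beyond keeping track of one-sided limits.
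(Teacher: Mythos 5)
Your proof is correct and follows essentially the same route as the paper's: extract a bounded maximizing sequence in $[\lambda_V(c),\infty)$ (boundedness coming from $F$ vanishing at infinity and $\tau_V(c)>0$), pass to a limit $\gamma$, and use the one-sided regularity of $F$ to conclude $F(\gamma)=\tau_V(c)$. The only cosmetic difference is that the paper selects a monotone maximizing sequence outright, which lets it treat both one-sided limits in a single line, whereas you extract a convergent subsequence via Bolzano--Weierstrass and then split into the cases $\lambda_{n_k}\ge\gamma$ and $\lambda_{n_k}<\gamma$; this is logically equivalent and equally valid.
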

\begin{proof}
The right-continuity and limiting behavior of $F$ is obvious from its definition. Next, we deal with the second part. Let $c\in(0,\mathcal{L}_V(0))$. Clearly, $\lambda_V(c)\in(0,\infty)$. By restricting the domain of $F$ to $[\lambda_V(c),\infty)$, the function $F$ is bounded and vanishes at infinity. This implies that there is a bounded monotone sequence $u_n\in[\lambda_V(c),\infty)$ such that $F(u_n)\ra\tau_V(c)$. If $\gamma$ is the limit of $u_n$, then the first part of this lemma implies $\tau_V(c)=\lim_nF(u_n)\le F(\gamma)\le \tau_V(c)$ as desired.
\end{proof}

\begin{lem}\label{l-ltcomp}
Let $V\in\mathcal{V}$ and $\epsilon,c,c_1,c_2$ be constants in $(0,\mathcal{L}_V(0))$.
\begin{itemize}
\item[(1)] $\mathcal{L}_V(\tau_V(c))\ge c/(1+c)$ and, for $s>0$,
\[
 \mathcal{L}_V(\tau_V(c)+s)\le c+\frac{\tau_V(c)+s}{s e^{s\lambda_V(c)}}.
\]

\item[(2)] $\mathcal{L}_V(T_V(\epsilon))=\epsilon$ and, for $r\ge 0$, $s>0$ and $c_1<c_2$,
\[
 \mathcal{L}_V(T_V(\epsilon)+r+s)\le c_1+c_2e^{-(T_V(\epsilon)+r+s)\lambda_V(c_1)}
 +\frac{\epsilon(T_V(\epsilon)+r+s)}{(T_V(\epsilon)+s)e^{r\lambda_V(c_2)}}.
\]
\end{itemize}
\end{lem}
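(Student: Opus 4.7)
The plan is to prove both parts using the integration-by-parts representation $\mathcal{L}_V(t) = t\int_0^\infty V(\lambda)e^{-t\lambda}d\lambda$ from Lemma \ref{l-ibp}, together with the elementary facts that $V(\lambda) \le c$ for $\lambda < \lambda_V(c)$ and $V(\lambda) \ge c$ for $\lambda \ge \lambda_V(c)$ (the latter via right-continuity of $V$). The overall strategy is to split the defining integral of $\mathcal{L}_V$ at one or two of the thresholds $\lambda_V(c_i)$ and to estimate each piece using the appropriate inequality for $V$ on that region.

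For Part (1), the lower bound uses Lemma \ref{l-tvc} to pick $\gamma \ge \lambda_V(c)$ with $\tau_V(c) = \gamma^{-1}\log(1+V(\gamma))$, equivalently $e^{\tau_V(c)\gamma} = 1 + V(\gamma)$. Restricting the Lebesgue--Stieltjes integral for $\mathcal{L}_V(\tau_V(c))$ to $(0,\gamma]$ and using $V(\gamma) \ge c$ gives
\begin{equation*}
\mathcal{L}_V(\tau_V(c)) \ge e^{-\tau_V(c)\gamma}V(\gamma) = \frac{V(\gamma)}{1+V(\gamma)} \ge \frac{c}{1+c},
\end{equation*}
since $t\mapsto t/(1+t)$ is increasing on $(0,\infty)$. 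For the upper bound, I would split the integration-by-parts expression for $\mathcal{L}_V(\tau_V(c)+s)$ at $\lambda_V(c)$. On $(0,\lambda_V(c))$, the bound $V(\lambda)\le c$ and $\int_0^\infty e^{-(\tau_V(c)+s)\lambda}d\lambda = 1/(\tau_V(c)+s)$ yield a contribution of exactly $c$ after the prefactor cancels. On $[\lambda_V(c),\infty)$, the definition of $\tau_V(c)$ gives $V(\lambda) \le e^{\tau_V(c)\lambda}$, which cancels the $e^{-\tau_V(c)\lambda}$ weight and leaves $\int_{\lambda_V(c)}^\infty e^{-s\lambda}d\lambda = e^{-s\lambda_V(c)}/s$, producing the second term.

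For Part (2), the equality $\mathcal{L}_V(T_V(\epsilon))=\epsilon$ is immediate from the strict decrease and continuity of $\mathcal{L}_V$ together with $\mathcal{L}_V(\infty)=0$, all supplied by Lemma \ref{l-ibp}. For the inequality, set $t := T_V(\epsilon)+r+s$ and split the integration-by-parts integral into $(0,\lambda_V(c_1))$, $[\lambda_V(c_1),\lambda_V(c_2))$, and $[\lambda_V(c_2),\infty)$. The first two intervals, treated as in Part (1), contribute $c_1$ and $c_2 e^{-t\lambda_V(c_1)}$ respectively. The delicate step is the tail: on $[\lambda_V(c_2),\infty)$, split the exponent as $e^{-t\lambda} = e^{-r\lambda}e^{-(T_V(\epsilon)+s)\lambda}$, bound $e^{-r\lambda}\le e^{-r\lambda_V(c_2)}$ on this region, and recognize the remaining $\int_{\lambda_V(c_2)}^\infty V(\lambda)e^{-(T_V(\epsilon)+s)\lambda}d\lambda$ (after extending back to $(0,\infty)$) as $\mathcal{L}_V(T_V(\epsilon)+s)/(T_V(\epsilon)+s) \le \epsilon/(T_V(\epsilon)+s)$ by monotonicity of $\mathcal{L}_V$. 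This produces the third term with denominator $T_V(\epsilon)+s$ rather than $t$, which is the only nontrivial bookkeeping point in the argument; the remaining work is purely routine.
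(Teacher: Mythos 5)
Your proposal is correct and follows essentially the same approach as the paper: the lower bound in (1) via Lemma \ref{l-tvc} and restriction of the Stieltjes integral to $(0,\gamma]$, and the upper bounds via the integration-by-parts identity of Lemma \ref{l-ibp} with the integral split at $\lambda_V(c)$ (resp.\ at $\lambda_V(c_1)$ and $\lambda_V(c_2)$), using $V(\lambda)\le c$ below a threshold and $V(\lambda)\le e^{\tau_V(c)\lambda}$ above it, and in (2) factoring $e^{-t\lambda}=e^{-r\lambda}e^{-(T_V(\epsilon)+s)\lambda}$ on the tail and identifying the remaining integral with $\mathcal{L}_V(T_V(\epsilon)+s)/(T_V(\epsilon)+s)\le\epsilon/(T_V(\epsilon)+s)$.
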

\begin{proof}
The proof is a little lengthy and delegated to the appendix.
\end{proof}

\begin{prop}\label{p-ltcomp}
Let $V\in\mathcal{V}$,  $\epsilon,c,c_1,c_2$ be constants in $(0,\mathcal{L}_V(0))$ and $\alpha=\sqrt{\tau_V(c)\lambda_V(c)}$. Then,

\begin{equation}\label{eq-mixing1}
 \left(\frac{\alpha}{\alpha+A}\right)T_V\left(c+\frac{A+\alpha}{Ae^{A\alpha}}\right)
 \le\tau_V(c)\le T_V\left(\frac{c}{1+c}\right),\quad\forall A>0,
\end{equation}
and
\begin{equation}\label{eq-mixing2}
 T_V\left(c_1+c_2e^{-T_V(\epsilon)\lambda_V(c_1)}+2\epsilon e^{-B}\right)\le T_V(\epsilon)+\frac{2B}{\lambda_V(c_2)},\quad\forall B>0.
\end{equation}
In particular, one has
\begin{equation}\label{eq-bm0}
 \tau_V(2\delta)\le T_V(\delta)\le\frac{6}{\delta^2}\tau_V\left(\frac{\delta}{2}\right),\quad \forall 0<\delta<\frac{\mathcal{L}_V(0)\wedge 1}{2}.
\end{equation}
\end{prop}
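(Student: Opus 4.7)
The plan is to derive all three inequalities from Lemma \ref{l-ltcomp} by carefully tuning parameters. The first two are clean algebraic rearrangements; the two-sided bound (\ref{eq-bm0}) is the main obstacle and requires a two-case analysis on the size of $\alpha$.

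For (\ref{eq-mixing1}), the upper bound $\tau_V(c) \le T_V(c/(1+c))$ is immediate from $\mathcal{L}_V(\tau_V(c)) \ge c/(1+c)$ in Lemma \ref{l-ltcomp}(1), combined with the strict monotonicity of $\mathcal{L}_V$ from Lemma \ref{l-ibp}. For the lower bound I would apply the second estimate in Lemma \ref{l-ltcomp}(1) with the specific choice $s := A\tau_V(c)/\alpha$, engineered so that $s\lambda_V(c) = A\alpha$ and $(\tau_V(c)+s)/(se^{s\lambda_V(c)})$ reduces precisely to $(A+\alpha)/(Ae^{A\alpha})$. The resulting inequality $\mathcal{L}_V(\tau_V(c)(\alpha+A)/\alpha) \le c + (A+\alpha)/(Ae^{A\alpha})$ translates via the definition of $T_V$ into the desired bound.

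For (\ref{eq-mixing2}) I would apply Lemma \ref{l-ltcomp}(2) with $r = s := B/\lambda_V(c_2)$. The middle term on the right-hand side is then bounded by $c_2 e^{-T_V(\epsilon)\lambda_V(c_1)}$ since dropping $r+s$ from the exponent only weakens the estimate; the third term equals $\epsilon(T_V(\epsilon)+2B/\lambda_V(c_2))/((T_V(\epsilon)+B/\lambda_V(c_2))e^B)$, which by the trivial bound $(x+2y)/(x+y) \le 2$ collapses to $2\epsilon e^{-B}$. Assembling these and invoking the definition of $T_V$ yields (\ref{eq-mixing2}) directly.

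The remaining task is (\ref{eq-bm0}). The lower bound $\tau_V(2\delta) \le T_V(\delta)$ follows from the upper part of (\ref{eq-mixing1}) at $c = 2\delta$ together with the elementary fact $2\delta/(1+2\delta) \ge \delta$ for $\delta \le 1/2$ and the monotonicity of $T_V$ in its argument. For the upper bound, I would invoke the lower part of (\ref{eq-mixing1}) at $c = \delta/2$, exploit the a priori input $\alpha^2 \ge \log(1+c)$ (which comes from $V(\lambda_V(c)) \ge c$ by right-continuity and the definition of $\tau_V$), and split into two cases. If $\alpha^2 \ge \log(4/\delta)$, the choice $A = \alpha$ makes $(A+\alpha)/(Ae^{A\alpha}) = 2e^{-\alpha^2} \le \delta/2$ and the prefactor $1+A/\alpha = 2$ is trivially below $6/\delta^2$; if $\alpha^2 < \log(4/\delta)$, the choice $A = \log(4/\delta)/\alpha$ gives $(A+\alpha)/(Ae^{A\alpha}) = (1+\alpha^2/\log(4/\delta))(\delta/4) \le \delta/2$ and, using $\alpha^2 \ge \log(1+\delta/2) \ge \delta/3$ on $(0,1/2)$, bounds the prefactor $1+\log(4/\delta)/\alpha^2$ by $6/\delta^2$ via the elementary calculus inequality $\delta^2 + 3\delta\log(4/\delta) \le 6$ on $(0, 1/2)$. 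The main obstacle is locating this case-splitting threshold $\log(4/\delta)$ and recognising that the universal estimate $\alpha^2 \ge \log(1+c)$ is precisely what rescues the second case; without it the prefactor in the lower part of (\ref{eq-mixing1}) cannot be controlled purely in terms of $\delta$.
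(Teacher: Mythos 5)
Your derivations of (\ref{eq-mixing1}) and (\ref{eq-mixing2}) match the paper's exactly (same substitutions $s=A\tau_V(c)/\alpha$ and $r=s=B/\lambda_V(c_2)$ in Lemma \ref{l-ltcomp}). For (\ref{eq-bm0}) you and the paper both lean on $\alpha^2\ge\log(1+c)$, but the argument for the upper bound diverges. The paper makes a single choice $A=1/[c\sqrt{\log(1+c)}]$, replaces $e^{A\alpha}$ by the crude lower bound $1+A\alpha$, and verifies by direct algebra that $(A+\alpha)/(Ae^{A\alpha})\le c$; this puts $2c$ inside $T_V$ with prefactor $(c\log(1+c)+1)/(c\log(1+c))$, and then $\log(1+u)\ge u/(1+u)$ gives the factor $6/\delta^2$ after substituting $c=\delta/2$. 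Your two-case split on $\alpha^2\gtrless\log(4/\delta)$ instead keeps $e^{A\alpha}$ exact: with $A=\alpha$ the error term is $2e^{-\alpha^2}\le\delta/2$ and the prefactor is the constant $2$, while with $A=\log(4/\delta)/\alpha$ you make $e^{A\alpha}=4/\delta$ explicit and reduce the prefactor control to the elementary inequality $\delta^2+3\delta\log(4/\delta)\le6$ on $(0,1/2)$. Both routes are sound and produce the same constant $6/\delta^2$; the paper's avoids case analysis at the cost of a less transparent algebraic verification of $(1+\alpha/A)/(1+\alpha A)\le c[1+c\log(1+c)]/(1+c)$, whereas yours is more mechanical but requires locating the threshold $\log(4/\delta)$ and the additional (though straightforward) calculus check.
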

\begin{proof}
(\ref{eq-mixing1}) follows immediately from Lemma \ref{l-ltcomp}(1) with $s=A\tau_V(c)/\alpha$. For (\ref{eq-mixing2}), the replacement of $s=r=B/\lambda_V(c_2)$ in Lemma \ref{l-ltcomp}(2) yields
\[
 \mathcal{L}_V(T_V(\epsilon)+2B/\lambda_V(c_2))\le c_1+c_2e^{-T_V(\epsilon)\lambda_V(c_1)}+2\epsilon e^{-B},
\]
which leads to the desired inequality.

Next, we prove (\ref{eq-bm0}). From the definitions of $\lambda_V(c)$ and $\tau_V(c)$, it is easy to see that $\alpha\ge\sqrt{\log(1+c)}$. As a result, when $A=1/[c\sqrt{\log(1+c)}]$, one has
\[
 \frac{1+\alpha/A}{e^{\alpha A}}\le\frac{1+\alpha/A}{1+\alpha A}
 \le\frac{c[1+c\log(1+c)]}{1+c}\le c,\quad\forall 0<c<\mathcal{L}_V(0)\wedge 1.
\]
By (\ref{eq-mixing1}), this implies
\[
 \frac{c\log(1+c)}{c\log(1+c)+1}T_V(2c)\le\tau_V(c)\le T_V\left(\frac{c}{1+c}\right)\le T_V(c/2).
\]
Replacing $c$ with $\delta/2$ and $2\delta$ in the first and second inequalities, we obtain
\[
 \tau_V(2\delta)\le T_V(\delta)\le\frac{\delta\log(1+\delta/2)+2}
 {\delta\log(1+\delta/2)}\tau_V(\delta/2)\le\frac{6}{\delta^2}
 \tau_V(\delta/2),
\]
for $0<\delta<(\mathcal{L}_V(0)\wedge 1)/2$, where the last inequality uses the fact of $\log(1+u)\ge u/(1+u)$ for $u>-1$.
\end{proof}

\begin{proof}[Proof of Theorem \ref{t-ltcutoff}]
We first show the equivalence of (1), (2) and (3). Assume that $(\mathcal{L}_{V_n})_{n=1}^\infty$ has a cutoff and let $\epsilon>0$  and $c>0$. By Remark \ref{r-cutoff}, $T_{V_n}(\epsilon)$ can be a cutoff time and this implies
\[
 \mathcal{L}_{V_n}(2T_{V_n}(\epsilon))\ge\int_{(0,\lambda_{V_n}(c)]}
 e^{-2T_{V_n}(\epsilon)\lambda}dV_n(\lambda)\ge ce^{-2T_{V_n}(\epsilon)\lambda_{V_n}(c)}.
\]
Letting $n\ra\infty$ yields $T_{V_n}(\epsilon)\lambda_{V_n}(c)\ra\infty$. This proves (1)$\Ra$(2), while (2)$\Ra$(3) is obvious.

Next, we assume (3) and let $\epsilon>0$ be a constant such that $T_{V_n}(\epsilon)\lambda_{V_n}(c)\ra\infty$ for all $c>0$. By Lemma \ref{l-ibp}, one has
\[
  \mathcal{L}_{V_n}(aT_{V_n}(\epsilon))=aT_{V_n}(\epsilon)
  \int_{(0,\infty)}V_n(\lambda)e^{-aT_{V_n}(\epsilon)\lambda}d\lambda,\quad\forall a>0.
\]
This implies that, for $a>1$,
\begin{align}
 \mathcal{L}_{V_n}(aT_{V_n}(\epsilon))&\le c+aT_{V_n}(\epsilon)
 \int_{[\lambda_{V_n}(c),\infty)}V_n(\lambda)e^{-aT_{V_n}(\epsilon)\lambda}
 d\lambda\notag\\
 &\le c+ae^{(1-a)T_{V_n}(\epsilon)\lambda_{V_n}(c)}T_{V_n}(\epsilon)
 \int_{[\lambda_{V_n}(c),\infty)}V_n(\lambda)
 e^{-T_{V_n}(\epsilon)\lambda}d\lambda\notag\\
 &\le c+a\epsilon e^{(1-a)T_{V_n}(\epsilon)\lambda_{V_n}(c)},\notag
\end{align}
and, similarly, for $a\in(0,1)$,
\[
 \mathcal{L}_{V_n}(T_{V_n}(\epsilon))\le c+a^{-1}e^{(a-1)T_{V_n}(\epsilon)\lambda_{V_n}(c)}\mathcal{L}_{V_n}(aT_{V_n}(\epsilon)).
\]
Since $\mathcal{L}_{V_n}(0)\ra \infty$, there is $N>0$ such tat $T_{V_n}(\epsilon)>0$ for $n\ge N$. This implies $\mathcal{L}_{V_n}(T_{V_n}(\epsilon))=\epsilon$ for $n\ge N$ and
\[
 \mathcal{L}_{V_n}(aT_{V_n}(\epsilon))\ge(\epsilon-c)
 ae^{(1-a)T_{V_n}(\epsilon)\lambda_{V_n}(c)},\quad\forall a\in(0,1),\,n\ge N.
\]
As a consequence, we obtain that, for $a>1$,
\[
 \limsup_{n\ra\infty}\mathcal{L}_{V_n}(aT_{V_n}(\epsilon))\le\limsup_{c\ra 0}\limsup_{n\ra\infty}\left(c+a\epsilon e^{(1-a)T_{V_n}(\epsilon)\lambda_{V_n}(c)}\right)=0,
\]
and, for $a\in(0,1)$ and $c\in(0,\epsilon)$,
\[
 \liminf_{n\ra\infty}\mathcal{L}_{V_n}(aT_{V_n}(\epsilon))
 \ge\liminf_{n\ra\infty}(\epsilon-c)
 ae^{(1-a)T_{V_n}(\epsilon)\lambda_{V_n}(c)}=\infty.
\]
This proves that $(\mathcal{L}_{V_n})_{n=1}^\infty$ has a cutoff.

Now, we prove the equivalence of (1)-(6). First, consider (2)$\Ra$(4) and set
\begin{equation}\label{eq-alpha}
 \alpha_n(c)=\sqrt{\tau_{V_n}(c)\lambda_{V_n}(c)}, \quad\forall c>0.
\end{equation}
By applying the first inequality of (\ref{eq-mixing1}) to $V_n$ with $A=\alpha_n(c)$, we obtain $\tau_{V_n}(c)\ge T_{V_n}(c+2)/2$. Based on the assumption of (2), this implies $\tau_{V_n}(c)\lambda_{V_n}(c)\ge T_{V_n}(c+2)\lambda_{V_n}(c)/2\ra\infty$, which proves (4). (5)$\Ra$(6) is obvious, while (6)$\Ra$(3) is given by the second inequality of (\ref{eq-mixing1}). To finish the proof of equivalence, it remains to show that (4)$\Ra$(5). Suppose that (4) holds and let $c_1,c_2$ be positive constants. For convenience, we set $F_n(\lambda)=\lambda^{-1}\log(1+V_n(\lambda))$. Since $\mathcal{L}_{V_n}(0)\ra\infty$, one may select $N>0$ such that $c_1\vee c_2<\mathcal{L}_{V_n}(0)$ for $n\ge N$. By Lemma \ref{l-tvc}, there are $\gamma_{i,n}\ge\lambda_{V_n}(c_i)$ with $i\in\{1,2\}$ such that
\begin{equation}\label{eq-gamma}
 \tau_{V_n}(c_i)=F_n(\gamma_{i,n})
 =\sup_{\lambda\ge\lambda_{V_n}(c_i)}F_n(\lambda),\quad\forall i=1,2.
\end{equation}
The first identity in (\ref{eq-gamma}) implies
\[
 \tau_{V_n}(c_i)\lambda_{V_n}(c_i)\le\log(1+V_n(\gamma_{i,n})),\quad\forall n\ge N,
\]
and, by the assumption of (4), $V_n(\gamma_{i,n})\ra\infty$ for $i=1,2$. As a result, we may refine $N$ such that $V_n(\gamma_{1,n})\wedge V_n(\gamma_{2,n})\ge c_1\vee c_2$ for $n\ge N$. By (\ref{eq-gamma}), this implies
\[
 \tau_{V_n}(c_i)=\sup\{F_n(\lambda)|\lambda\ge\lambda_{V_n}(c_1\vee c_2)\},\quad
 \forall i\in\{1,2\},\,n\ge N,
\]
and, hence, $\tau_{V_n}(c_1)=\tau_{V_n}(c_2)$ for $n\ge N$. Consequently, we obtain that both $\tau_{V_n}(c_1)\lambda_{V_n}(c_2)$ and $\tau_{V_n}(c_2)\lambda_{V_n}(c_1)$ tend to infinity, as desired in (5).

In the end, we derive a cutoff time and the bounds in (\ref{eq-mixbound1})-(\ref{eq-mixbound2}). Suppose that $(\mathcal{L}_{V_n})_{n=1}^\infty$ has a cutoff. By Remark \ref{r-cutoff}, one has $T_{V_n}(\epsilon)\sim T_{V_n}(\delta)$ for all $\epsilon,\delta\in(0,\infty)$ and, referring to the setting in (\ref{eq-alpha}), (4) implies $\alpha_n(c)\ra\infty$ for all $c>0$. Applying (\ref{eq-mixing1}) with $A=1$ and the fact of $e^x\ge 1+x$, we obtain
\begin{equation}\label{eq-alphan}
 \frac{\alpha_n(c)}{\alpha_n(c)+1}T_{V_n}(c+1)\le\tau_{V_n}(c)\le T_{V_n}(c/(1+c)),
\end{equation}
for all $n,c$ satisfying $\mathcal{L}_{V_n}(0)>c$. As $\mathcal{L}_{V_n}(0)\ra\infty$, letting $n\ra\infty$ yields $\tau_{V_n}(c)\sim T_{V_n}(c/(1+c))$ and, by Remark \ref{r-cutoff}, $\tau_{V_n}(c)$ is a cutoff time for all $c>0$.

For (\ref{eq-mixbound1}), let $\epsilon>\delta>0$ and $c>0$. Since $(\mathcal{L}_{V_n})_{n=1}^\infty$ has a cutoff, Theorem \ref{t-ltcutoff}(2) implies $T_{V_n}(\epsilon) \lambda_{V_n}(\delta/2)\ra\infty$. By applying (\ref{eq-mixing2}) with $V=V_n$, $c_1=\delta/2$ and $c_2=c$ and using the fact of $\mathcal{L}_{V_n}(0)\ra\infty$, one may select $B>0$ and $N>0$ such that, for $n\ge N$,
\[
 T_{V_n}(\delta)\le T_{V_n}\left(\frac{\delta}{2}+ce^{-T_{V_n}(\epsilon)\lambda_{V_n}(\delta/2)}+2\epsilon e^{-B}\right)\le T_{V_n}(\epsilon)+2B/\lambda_{V_n}(c).
\]
As it is clear from the definition of $T_{V_n}$ that $T_{V_n}(\delta)\ge T_{V_n}(\epsilon)$, the above inequalities lead to (\ref{eq-mixbound1}).

To see (\ref{eq-mixbound2}), let $\epsilon,c\in(0,\infty)$ and write
\[
 |\tau_{V_n}(c)-T_{V_n}(\epsilon)|\le|\tau_{V_n}(c)-T_{V_n}(c/(c+1))|
 +|T_{V_n}(c/(c+1))-T_{V_n}(\epsilon)|.
\]
Note that, by (\ref{eq-alphan}), if $\mathcal{L}_{V_n}(0)>c$, then
\begin{align}
 |\tau_{V_n}(c)-T_{V_n}(c/(c+1))|\le&|T_{V_n}(c/(1+c))-T_{V_n}(c+1)|\notag\\
 &\qquad+\frac{T_{V_n}(c+1)/\alpha_n(c)}{1+1/\alpha_n(c)}.\notag
\end{align}
Assuming that $(\mathcal{L}_{V_n})_{n=1}^\infty$ has a cutoff, (\ref{eq-mixbound1}) gives
\[
 |T_{V_n}(\widetilde{c})-T_{V_n}(\epsilon)|=O(1/\lambda_{V_n}(c)),\quad\forall \widetilde{c}>0,
\]
and, by the triangle inequality, this implies
\[
 |T_{V_n}(c/(1+c))-T_{V_n}(c+1)|=O(1/\lambda_{V_n}(c)).
\]
As a result of Theorem \ref{t-ltcutoff}(4), $\alpha_n(c)\ra\infty$ and this is equivalent to $1/\lambda_{V_n}(c)=o(\sqrt{\tau_{V_n}(c)/\lambda_{V_n}(c)})$. Since $\tau_{V_n}(c)$ is a cutoff time, Remark \ref{r-cutoff} implies $T_{V_n}(c+1)\sim\tau_{V_n}(c)$ and this leads to
\[
 \frac{T_{V_n}(c+1)/\alpha_n(c)}{1+1/\alpha_n(c)}\sim \frac{\tau_{V_n}(c)}{\alpha_n(c)}
 =\sqrt{\frac{\tau_{V_n}(c)}{\lambda_{V_n}(c)}},
\]
as desired.
\end{proof}

\section{Cutoff of reversible Markov chains}\label{s-l2}

The goal of this section is two-fold. In the first subsection, we derive criteria for $L^2$-cutoffs and formulas for $L^2$-cutoff times using the results in Section \ref{s-ct}. In the second subsection, we provide a comparison of $L^2$-cutoffs between continuous time chains and lazy discrete time chains. Note that the theory developed in Section \ref{s-ct} is immediately applicable for the continuous time case. In the discrete time case, one should be aware that the time sequence is integer-valued but there is no big difference in concluding similar results due to the assumption that the $L^2$-mixing time tends to infinity.

As in the introduction, we write $\mathcal{F}$ for a family of irreducible and reversible finite Markov chains. In the discrete time case, it means $\mathcal{F}=(\mu_n,\mathcal{S}_n,K_n,\pi_n)_{n=1}^\infty$ and, in the continuous time case, one has $\mathcal{F}=(\mu_n,\mathcal{S}_n,L_n,\pi_n)_{n=1}^\infty$. In either case, we use $d_{n,2}(\mu_n,\cdot)$ and $T_{n,2}(\mu_n,\cdot)$ to denote the $L^2$-distance and the $L^2$-mixing time of the $n$th chain in $\mathcal{F}$.

\subsection{$L^2$-cutoffs for reversible Markov chains}
One can see from (\ref{eq-l2cutoffdef}) that, to identify a $L^2$-cutoff, either a precise estimation of the $L^2$-cutoff time is made or a sophisticated computation of the $L^2$-mixing time is required. Instead of dealing with the existence of a cutoff directly, it could be more efficient to explore the existence of a pre-cutoff, which is a necessary condition for a cutoff, in advance. In the discrete time case, we say that $\mathcal{F}$ has a $L^2$-pre-cutoff if there are positive constants $A<B$ and a sequence of positive reals $(t_n)_{n=1}^\infty$ such that
\[
 \limsup_{n\ra\infty}d_{n,2}(\mu_n,\lceil Bt_n\rceil)=0,\quad \liminf_{n\ra\infty}d_{n,2}(\mu_n,\lfloor At_n\rfloor)>0.
\]
In the continuous time case, the $L^2$-pre-cutoff is similarly defined by removing $\lceil\cdot\rceil$ and $\lfloor\cdot\rfloor$.

It can be seen from the above definition and (\ref{eq-l2cutoffdef}) that, for families of continuous time chains, $\liminf_n\pi_n(|\mu_n/\pi_n|^2)>1$ is necessary for the existence of a $L^2$-pre-cutoff and $\lim_n\pi_n(|\mu_n/\pi_n|^2)=\infty$ is necessary for the presence of a $L^2$-cutoff. For families of discrete time chains, we consider the specific case that $T_{n,2}(\mu_n,\epsilon_0)\ra\infty$ for some $\epsilon_0\in(0,\infty)$. By following the definition, if $\mathcal{F}$ has a $L^2$-pre-cutoff, then $\liminf_n\pi_n(|\mu_nK_n/\pi_n|^2)>1$; if $\mathcal{F}$ presents a $L^2$-cutoff, then $\lim_n\pi_n(|\mu_nK_n/\pi_n|^2)=\infty$. It is clear that both conclusions are more rigid than those necessary conditions in the continuous time case. A reason why we consider $\pi_n(|\mu_nK_n/\pi_n|^2)$ instead of $\pi_n(|\mu_n/\pi_n|^2)$ is that, by the first identity in (\ref{eq-l2dis}), when a chain starts evolving, those eigenvectors corresponding to eigenvalue $0$ play no roles in the $L^2$-distance and thus should be discarded. In other words, when concerning a discrete time chain, say $(\mu,\mathcal{S},K,\pi)$, it is more meaningful to consider the time-shifted chain $(\mu K,\mathcal{S},K,\pi)$ instead.

By (\ref{eq-laplace}) and (\ref{eq-V}), the following three theorems are immediate applications of Theorems \ref{t-precutoff}-\ref{t-ltcutoff} to finite Markov chains. The first theorem establishes the equivalence of $L^2$-cutoffs and $L^2$-pre-cutoff, which can fail in general, say in the total variation and in separation.

\begin{thm}\label{t-l2precutoff}
Let $\mathcal{F}$ be a family of irreducible and reversible finite Markov chains.
\begin{itemize}
\item[(1)] For the continuous time case, assume that $\liminf_n\pi_n(|\mu_n/\pi_n|^2)>1$. Then, $\mathcal{F}$ has a $L^2$-cutoff if and only if $\mathcal{F}$ has a $L^2$-pre-cutoff.

\item[(2)] For the discrete time case, assume that $\liminf_n\pi_n(|\mu_nK_n/\pi_n|^2)>1$ and $T_{n,2}(\mu_n,\epsilon_0)\ra\infty$ for some $\epsilon_0\in(0,\infty)$. Then, $\mathcal{F}$ has a $L^2$-cutoff if and only if $\mathcal{F}$ has a $L^2$-pre-cutoff.
\end{itemize}
\end{thm}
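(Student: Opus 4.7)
The plan is to transfer both parts of Theorem~\ref{t-l2precutoff} to Theorem~\ref{t-precutoff} via the Laplace-transform representation \eqref{eq-laplace}--\eqref{eq-V}. Writing $V_n\in\mathcal V$ for the measure attached to the $n$th chain in $\mathcal F$, one has $d_{n,2}(\mu_n,t)^2=\mathcal{L}_{V_n}(t)$ for every $t\ge 0$ in the continuous-time case and for every nonnegative integer $t$ in the discrete-time case. Orthonormality of the eigenvectors gives $\mathcal{L}_{V_n}(0)=\sum_{i>0}|\mu_n(\phi_{n,i})|^2=\pi_n(|\mu_n/\pi_n|^2)-1$ in the continuous setting, and in the discrete setting one likewise has $\mathcal{L}_{V_n}(1)=\sum_{i>0}\beta_{n,i}^2|\mu_n(\phi_{n,i})|^2=\pi_n(|\mu_n K_n/\pi_n|^2)-1$.

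For part~(1), the hypothesis $\liminf_n\pi_n(|\mu_n/\pi_n|^2)>1$ is equivalent to $\liminf_n\mathcal{L}_{V_n}(0)>0$, so the standing assumption of Definition~\ref{d-cutoff} is met. After squaring, an $L^2$-pre-cutoff for $\mathcal F$ is word-for-word a pre-cutoff for $(\mathcal{L}_{V_n})_{n=1}^\infty$, so Theorem~\ref{t-precutoff} yields a Laplace-transform cutoff at some time $s_n$. The remark following Lemma~\ref{l-anal} asserts that any such cutoff forces $\mathcal{L}_{V_n}(0)\to\infty$, i.e.\ $M=\infty$; hence $d_{n,2}(\mu_n,as_n)^2\to 0$ for $a>1$ and $d_{n,2}(\mu_n,as_n)^2\to\infty$ for $0<a<1$, which is the desired $L^2$-cutoff. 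The converse direction is immediate from the definitions.

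For part~(2), the only new ingredient is integer-time bookkeeping. Assume $\mathcal F$ has a discrete $L^2$-pre-cutoff witnessed by $t_n>0$ and constants $0<A<B$. First I would show $t_n\to\infty$: otherwise some subsequence satisfies $t_n\le M$, so $\lceil Bt_n\rceil\le\lceil BM\rceil$, forcing $d_{n,2}(\mu_n,\lceil BM\rceil)\to 0$ along that subsequence and contradicting $T_{n,2}(\mu_n,\epsilon_0)\to\infty$. Once $t_n\to\infty$ the ratios $\lceil Bt_n\rceil/(Bt_n)$ and $\lfloor At_n\rfloor/(At_n)$ tend to $1$, so the monotonicity of $\mathcal{L}_{V_n}$ converts the discrete $L^2$-pre-cutoff at $t_n$ into a Laplace-transform pre-cutoff for $(\mathcal{L}_{V_n})$ with slightly enlarged constants; the hypothesis $\liminf_n\pi_n(|\mu_n K_n/\pi_n|^2)>1$ gives $\limsup_n\mathcal{L}_{V_n}(0)\ge\liminf_n\mathcal{L}_{V_n}(1)>0$. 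Theorem~\ref{t-precutoff} then produces a cutoff at some $s_n$, which must again satisfy $s_n\to\infty$ by the same argument. For $a>1$ and for $0<a<1$ respectively, the sandwich $\lfloor as_n\rfloor\le as_n\le\lceil as_n\rceil$ together with monotonicity yields $d_{n,2}(\mu_n,\lceil as_n\rceil)^2\le\mathcal{L}_{V_n}(as_n)\to 0$ and $d_{n,2}(\mu_n,\lfloor as_n\rfloor)^2\ge\mathcal{L}_{V_n}(as_n)\to\infty$, which is the desired $L^2$-cutoff.

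The main obstacle is part~(2), specifically proving $t_n\to\infty$ so that the discretization error between the integer-valued definition of $L^2$-cutoff and the real-valued cutoff delivered by Theorem~\ref{t-precutoff} becomes negligible. The hypothesis $T_{n,2}(\mu_n,\epsilon_0)\to\infty$ is exactly what rules out the pathological case of bounded cutoff times, where the $L^2$-distance can in principle drop from a positive value to zero in one step and no continuous rescaling can mimic that transition. The continuous case is essentially a transcription of definitions.
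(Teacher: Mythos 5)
Your proof is correct and takes the paper's intended route: Theorem~\ref{t-l2precutoff} is obtained by translating $L^2$-pre-cutoffs and $L^2$-cutoffs into the Laplace-transform framework via (\ref{eq-laplace})--(\ref{eq-V}) and invoking Theorem~\ref{t-precutoff}. The integer-time bookkeeping you supply for part~(2) --- showing $t_n\to\infty$ and $s_n\to\infty$ so the $\lceil\cdot\rceil$, $\lfloor\cdot\rfloor$ corrections wash out --- is exactly the detail the paper leaves implicit when it calls this an ``immediate application.''
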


To state the other two theorems, we need the following notations. Let $(\mu,\mathcal{S},L,\pi)$ be an irreducible and reversible continuous time finite Markov chain and $\lambda_0=0<\lambda_1\le\cdots\le\lambda_{|\mathcal{S}|-1}$ be eigenvalues of $-L$ with $L^2(\pi)$-orthonormal right eigenvectors $\phi_0=\mathbf{1}$, $\phi_1$,..., $\phi_{|\mathcal{S}|-1}$. For $c>0$, define
\begin{equation}\label{eq-jc}
 j(c):=\min\left\{j\ge 1\bigg|\sum_{i=1}^j|\mu(\phi_i)|^2>c\right\},
\end{equation}
and
\begin{equation}\label{eq-tauc}
 \tau(c):=\max_{j\ge j(c)}\left\{\frac{\log\left(1+\sum_{i=1}^j|\mu(\phi_i)|^2\right)}{2\lambda_j}\right\}.
\end{equation}
For the discrete time chain $(\mu,\mathcal{S},K,\pi)$, we define $j(c),\tau(c)$ by following (\ref{eq-jc})-(\ref{eq-tauc}) under the replacement of $\lambda_i$ with $-\log|\beta_i|$, where $\beta_i$'s and $\phi_i$'s are eigenvalues and $L^2(\pi)$-orthonormal right eigenvectors of $K$ satisfying $\beta_0=1>|\beta_1|\ge\cdots\ge|\beta_{|\mathcal{S}|-1}|$ and $1/\infty:=0$.

\begin{thm}\label{t-l2cutcts}
Consider a family of irreducible and reversible continuous time finite Markov chains $\mathcal{F}=(\mu_n,\mathcal{S}_n,L_n,\pi_n)_{n=1}^\infty$. Let $0<\lambda_{n,1}<\cdots<\lambda_{n,|\mathcal{S}_n|-1}$ be the eigenvalues of $-L_n$ and $j_n(c),\tau_n(c)$ be the constants in \textnormal{(\ref{eq-jc})-(\ref{eq-tauc})}. Assume that $\pi_n(|\mu_n/\pi_n|^2)\ra\infty$. Then, the following are equivalent.
\begin{itemize}
\item[(1)] $\mathcal{F}$ has a $L^2$-cutoff.

\item[(2)] For all $\epsilon>0$ and $c>0$, $T_{n,2}(\mu_n,\epsilon)\lambda_{n,j_n(c)}\ra\infty$.

\item[(3)] There is $\epsilon>0$ such that $T_{n,2}(\mu_n,\epsilon)\lambda_{n,j_n(c)}\ra\infty$ for all $c>0$.

\item[(4)] For all $c>0$, $\tau_n(c)\lambda_{n,j_n(c)}\ra\infty$.

\item[(5)] For all $\widetilde{c}>0$ and $c>0$, $\tau_n(\widetilde{c})\lambda_{n,j_n(c)}\ra\infty$.

\item[(6)] There is $\widetilde{c}>0$ such that $\tau_n(\widetilde{c})\lambda_{n,j_n(c)}\ra\infty$ for all $c>0$.
\end{itemize}
Further, if $\mathcal{F}$ has a $L^2$-cutoff, then $\tau_n(c)$ is a cutoff time for any $c>0$ and
\[
 |T_{n,2}(\mu_n,\epsilon)-T_{n,2}(\mu_n,\delta)|=O\left(1/\lambda_{n,j_n(c)})\right),
 \quad\forall \epsilon,\delta,c\in(0,\infty),
\]
and
\begin{equation}\label{eq-l2mixingtime}
 |T_{n,2}(\mu_n,\epsilon)-\tau_n(c)|=O\left(\sqrt{\tau_n(c)/\lambda_{n,j_n(c)}}\right),\quad \forall \epsilon,c\in(0,\infty).
\end{equation}
\end{thm}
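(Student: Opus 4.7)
The plan is to deduce Theorem \ref{t-l2cutcts} as a direct translation of Theorem \ref{t-ltcutoff} by choosing $V_n\in\mathcal{V}$ so that $d_{n,2}(\mu_n,t)^2=\mathcal{L}_{V_n}(t)$. Concretely, for each $n$ I take $V_n$ to be the step function defined in (\ref{eq-V}) (with $\lambda_i$ replaced by $\lambda_{n,i}$), so that by (\ref{eq-l2cts})--(\ref{eq-laplace}),
\[
 \mathcal{L}_{V_n}(t)=\sum_{i\ge 1}|\mu_n(\phi_{n,i})|^2 e^{-2t\lambda_{n,i}}=d_{n,2}(\mu_n,t)^2,\quad\forall t\ge 0.
\]
Parseval's identity then gives $\mathcal{L}_{V_n}(0)=\pi_n(|\mu_n/\pi_n|^2)-1$, so the standing hypothesis $\pi_n(|\mu_n/\pi_n|^2)\ra\infty$ is exactly $\mathcal{L}_{V_n}(0)\ra\infty$. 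The $L^2$-cutoff property (\ref{eq-l2cutoffdef}) is, after squaring, identical to Definition \ref{d-cutoff}(2) for $(\mathcal{L}_{V_n})$.

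Next I check the three correspondences. Since $V_n$ is right-continuous, constant on each interval $[2\lambda_{n,j-1},2\lambda_{n,j})$ with value $\sum_{i=1}^{j-1}|\mu_n(\phi_{n,i})|^2$, the threshold $V_n(\lambda)>c$ is first met at $\lambda=2\lambda_{n,j_n(c)}$, giving $\lambda_{V_n}(c)=2\lambda_{n,j_n(c)}$. On each flat piece of $V_n$ the map $\lambda\mapsto\lambda^{-1}\log(1+V_n(\lambda))$ is strictly decreasing, so the supremum defining $\tau_{V_n}(c)$ is attained at the jump points $\lambda=2\lambda_{n,j}$ for $j\ge j_n(c)$, which shows $\tau_{V_n}(c)=\tau_n(c)$, matching (\ref{eq-tauc}). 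Finally, $T_{V_n}(\epsilon^2)=T_{n,2}(\mu_n,\epsilon)$ is immediate from the definitions.

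With these identifications, I simply read off Theorem \ref{t-l2cutcts} from Theorem \ref{t-ltcutoff}. The equivalence of (1)--(6) transfers directly; the only bookkeeping is that $\lambda_{V_n}(c)=2\lambda_{n,j_n(c)}$ carries an innocuous factor of $2$ that is absorbed by each ``$\ra\infty$'' statement, and the quantifier ``$\forall\epsilon>0$'' for $T_{n,2}(\mu_n,\epsilon)$ corresponds bijectively to ``$\forall\epsilon>0$'' for $T_{V_n}(\epsilon^2)$ (and similarly for the existential versions in (3) and (6)). The cutoff-time claim and the bound (\ref{eq-l2mixingtime}), as well as $|T_{n,2}(\mu_n,\epsilon)-T_{n,2}(\mu_n,\delta)|=O(1/\lambda_{n,j_n(c)})$, follow in the same way from (\ref{eq-mixbound1})--(\ref{eq-mixbound2}), with the factor of $2$ once again being harmless.

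There is essentially no hard step here: all content is contained in Theorem \ref{t-ltcutoff}, and the only thing to watch is the verification that the supremum defining $\tau_{V_n}(c)$ really is attained at the left endpoints of the constancy intervals, so that it coincides with the discrete maximum in (\ref{eq-tauc}); this is handled by the monotonicity observation above. No further computation is required.
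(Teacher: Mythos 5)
Your argument is correct and is exactly the translation the paper has in mind: the paper itself states that Theorems~\ref{t-l2precutoff}--\ref{t-l2cutdis} are ``immediate applications'' of Theorems~\ref{t-precutoff}--\ref{t-ltcutoff} via (\ref{eq-laplace})--(\ref{eq-V}), and you have supplied precisely the dictionary ($\mathcal{L}_{V_n}(0)=\pi_n(|\mu_n/\pi_n|^2)-1$, $\lambda_{V_n}(c)=2\lambda_{n,j_n(c)}$, $\tau_{V_n}(c)=\tau_n(c)$, $T_{V_n}(\epsilon^2)=T_{n,2}(\mu_n,\epsilon)$) needed to read off the statement from Theorem~\ref{t-ltcutoff}. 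The observation that the supremum defining $\tau_{V_n}(c)$ is achieved at the left endpoints $2\lambda_{n,j}$ because $\lambda\mapsto\lambda^{-1}\log(1+V_n(\lambda))$ decreases on each constancy interval, and that the harmless factor of $2$ is absorbed, are the right things to check, and you have checked them.
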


\begin{thm}\label{t-l2cutdis}
Consider a family of irreducible and reversible discrete time finite Markov chains $\mathcal{F}=(\mu_n,\mathcal{S}_n,K_n,\pi_n)_{n=1}^\infty$. Let $\{1\}\cup\{\beta_{n,i}:i\ge 1\}$ be the eigenvalues of $K_n$ satisfying $|\beta_{n,1}|\ge\cdots\ge|\beta_{n,|\mathcal{S}_n|-1}|$ and $j_n(c),\tau_n(c)$ be the constants in \textnormal{(\ref{eq-jc})-(\ref{eq-tauc})} with $\lambda_{n,i}=-\log|\beta_{n,i}|$. Assume that $T_{n,2}(\mu_n,\epsilon_0)\ra\infty$ for some $\epsilon_0>0$ or $\tau_n(c)\ra\infty$ for some $c>0$. Assume further that $\pi_n(|\mu_nK_n/\pi_n|^2)\ra\infty$. Then, the equivalences in Theorem \ref{t-l2cutcts} also hold in this case. Further, if $\mathcal{F}$ has a $L^2$-cutoff, then $\tau_n(c)$ is a cutoff time for any $c>0$ and
\begin{equation}\label{eq-max1}
 |T_{n,2}(\mu_n,\epsilon)-T_{n,2}(\mu_n,\delta)|=O\left(\max\{1,1/\lambda_{n,j_n(c)}\}\right),
 \quad\forall \epsilon,\delta,c\in(0,\infty),
\end{equation}
and
\begin{equation}\label{eq-max2}
 |T_{n,2}(\mu_n,\epsilon)-\tau_n(c)|
 =O\left(\max\left\{1,\sqrt{\tau_n(c)/\lambda_{n,j_n(c)}}\right\}\right),
 \quad \forall \epsilon,c\in(0,\infty).
\end{equation}
\end{thm}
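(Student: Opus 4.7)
The plan is to reduce everything to Theorem \ref{t-ltcutoff} by identifying $d_{n,2}(\mu_n,m)^2$ with a Laplace transform and absorbing the integer-time effects into additive $O(1)$ error terms. For each $n$, define $V_n\in\mathcal{V}$ by the discrete-time convention described after (\ref{eq-V}): $V_n$ has a jump of size $|\mu_n(\phi_{n,i})|^2$ at $\lambda=-2\log|\beta_{n,i}|$, while eigenvectors with $\beta_{n,i}=0$ are discarded. Then (\ref{eq-l2dis}) gives $\mathcal{L}_{V_n}(m)=d_{n,2}(\mu_n,m)^2$ for every non-negative integer $m$, and Lemma \ref{l-ibp} extends $\mathcal{L}_{V_n}$ to a continuous, strictly decreasing function on $[0,\infty)$. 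The Section \ref{s-ct} quantities recover those of the statement exactly, namely $\lambda_{V_n}(c)=2\lambda_{n,j_n(c)}$ and $\tau_{V_n}(c)=\tau_n(c)$. The hypothesis $\pi_n(|\mu_nK_n/\pi_n|^2)\ra\infty$ translates as $\mathcal{L}_{V_n}(1)\ra\infty$, which by monotonicity forces $\mathcal{L}_{V_n}(0)\ra\infty$, so Theorem \ref{t-ltcutoff} is applicable to $(V_n)_{n=1}^\infty$.

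The quantitative bridge between the discrete and continuous mixing times is the identity $T_{n,2}(\mu_n,\epsilon)=\lceil T_{V_n}(\epsilon^2)\rceil$, which holds because $\mathcal{L}_{V_n}$ is continuous and strictly decreasing while the discrete mixing time only tests integer values; in particular $|T_{n,2}(\mu_n,\epsilon)-T_{V_n}(\epsilon^2)|\le 1$. The supplementary hypothesis---either $T_{n,2}(\mu_n,\epsilon_0)\ra\infty$ for some $\epsilon_0>0$ or $\tau_n(c_0)\ra\infty$ for some $c_0>0$---combined with the sandwich (\ref{eq-bm0}), ensures that $T_{V_n}(\epsilon)\ra\infty$ and $\tau_{V_n}(c)\ra\infty$ for all sufficiently small positive $\epsilon$ and $c$. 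Consequently the one-step discretization error is negligible whenever it competes with a diverging $T_{V_n}$, while it survives in the window estimates as an additive $O(1)$ that becomes the $\max\{1,\cdot\}$ in (\ref{eq-max1})--(\ref{eq-max2}). At the level of cutoff times, any $L^2$-cutoff time necessarily diverges, and the monotonicity of $\mathcal{L}_{V_n}$ combined with $\lceil(1+a)t_n\rceil/t_n\ra 1+a$, $\lfloor(1-a)t_n\rfloor/t_n\ra 1-a$ renders the discrete $L^2$-cutoff of $\mathcal{F}$ equivalent to the cutoff of $(\mathcal{L}_{V_n})_{n=1}^\infty$ in the sense of Definition \ref{d-cutoff}.

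With these reductions the six equivalent conditions of Theorem \ref{t-ltcutoff} applied to $(V_n)$ translate into conditions (1)--(6) of the present statement: (4)--(6) are literal because $\tau_{V_n}=\tau_n$ and $\lambda_{V_n}(c)=2\lambda_{n,j_n(c)}$, while (2) and (3) follow after replacing $T_{V_n}(\epsilon^2)$ by $T_{n,2}(\mu_n,\epsilon)=\lceil T_{V_n}(\epsilon^2)\rceil$, the limit being preserved by the remark just made. Therefore $\tau_n(c)$ is a valid cutoff time, and the window bounds (\ref{eq-max1})--(\ref{eq-max2}) follow from the triangle inequality together with $|T_{n,2}(\mu_n,\epsilon)-T_{V_n}(\epsilon^2)|\le 1$ and (\ref{eq-mixbound1})--(\ref{eq-mixbound2}), the integer rounding manifesting precisely as the $\max\{1,\cdot\}$ terms. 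The main obstacle is the careful bookkeeping of the integer-valued time: one must verify that the supplementary hypothesis rules out the degenerate regime in which $d_{n,2}(\mu_n,\cdot)$ collapses within $O(1)$ integer steps, and that each ceiling contribution is absorbed either into a diverging continuous quantity or into the $\max\{1,\cdot\}$ of the window bounds, rather than distorting the limit criteria themselves.
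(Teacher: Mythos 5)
Your proposal is correct and proceeds along exactly the route the paper intends (the paper dismisses these as ``immediate applications'' of Theorems \ref{t-precutoff}--\ref{t-ltcutoff} without spelling out the details). You correctly define $V_n$ via the discrete-time convention, verify the identifications $\lambda_{V_n}(c)=2\lambda_{n,j_n(c)}$ and $\tau_{V_n}(c)=\tau_n(c)$, observe that $\mathcal{L}_{V_n}(1)=\pi_n(|\mu_n K_n/\pi_n|^2)-1\to\infty$ implies $\mathcal{L}_{V_n}(0)\to\infty$, and use $T_{n,2}(\mu_n,\epsilon)=\lceil T_{V_n}(\epsilon^2)\rceil$ to absorb the integer-rounding error as the $\max\{1,\cdot\}$ in (\ref{eq-max1})--(\ref{eq-max2}).

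One small point worth making explicit: when transferring the existential criteria (conditions (2)--(3) of Theorem \ref{t-l2cutcts}) between $T_{n,2}$ and $T_{V_n}$, the inequality $T_{V_n}(\epsilon^2)\le T_{n,2}(\mu_n,\epsilon)\le T_{V_n}(\epsilon^2)+1$ is only useful once $T_{V_n}(\epsilon^2)$ is known to diverge; monotonicity of $T_{n,2}(\mu_n,\cdot)$ and $T_{V_n}(\cdot)$ in $\epsilon$ lets you shrink $\epsilon$ until that holds, using the supplementary hypothesis and (\ref{eq-bm0}) (equivalently Remark \ref{r-comp}). You gesture at this via ``for all sufficiently small positive $\epsilon$ and $c$,'' which is the right idea, but it is the place where the bookkeeping actually needs to be done. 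With that nailed down, the argument is complete.
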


\begin{rem}
Note that the mixing time of a discrete time chain is integer-valued and this results in the difference of (\ref{eq-max1})-(\ref{eq-max2}) from those corresponding identities in Theorem \ref{t-l2cutcts}.
\end{rem}

\begin{rem}
In Theorem \ref{t-l2cutcts}, the bound on the difference of $L^2$-mixing times say that, in the continuous time case, if the $L^2$-mixing time is selected as a $L^2$-cutoff time, then the cutoff window is at most $1/\lambda_{n,j_n(c)}$; if $\tau_n(c)$ is chosen as a $L^2$-cutoff time, then the cutoff window should be less than $\sqrt{\tau_n(c)/\lambda_{n,j_n(c)}}$, which is of order bigger than $1/\lambda_{n,j_n(c)}$. For the discrete time case, Theorem \ref{t-l2cutdis} provides a somewhat difference conclusion in (\ref{eq-max1})-(\ref{eq-max2}) due to the restriction of integer-valued times. The readers are referred to \cite{CSal08,CSal10} for a definition and more information of cutoff windows.
\end{rem}

As Theorems \ref{t-l2cutcts} and \ref{t-l2cutdis} provide criteria to inspect cutoffs and compute cutoff times, the following proposition supplies definite bounds on mixing times using (\ref{eq-tauc}), which is crucial to a family without cutoff.

\begin{prop}\label{p-bm}
Let $(\mu,\mathcal{S},L,\pi)$ and $(\mu,\mathcal{S},K,\pi)$ be irreducible and reversible finite Markov chains and $j(c),\tau(c)$ be the constants in \textnormal{(\ref{eq-jc})-(\ref{eq-tauc})}. Let $T_2(\mu,\cdot)$ be the $L^2$-mixing time and set $\alpha(c)=\sqrt{\tau(c)\lambda_{j(c)}}$.
\begin{itemize}
\item[(1)] For the continuous time case, one has, for $0<c<\pi(|\mu/\pi|^2)-1$ and $A>0$,
\begin{equation}\label{eq-bmcts1}
 \frac{\alpha(c)}{\alpha(c)+A}T_2\left(\mu,\sqrt{c+\frac{A+\alpha(c)}{Ae^{\alpha(c) A}}}\right)\le\tau(c)\le T_2\left(\mu,\sqrt{\frac{c}{1+c}}\right).
\end{equation}
In particular, for $0<\epsilon<\sqrt{[\pi(|\mu/\pi-1|^2)\wedge 1]/2}$,
\begin{equation}\label{eq-bmcts2}
 \tau(2\epsilon^2)\le T_2(\mu,\epsilon)\le\frac{6}{\epsilon^4}\tau(\epsilon^2/2).
\end{equation}

\item[(2)] For the discrete time case, one has, for $0<c<\pi(|\mu/\pi|^2)-1$ and $A>0$,
\begin{equation}\label{eq-bmdis1}
 \frac{\alpha(c)}{\alpha(c)+A}\left(T_2\left(\mu,\sqrt{c+\frac{A+\alpha(c)}{Ae^{\alpha(c) A}}}\right)-1\right)\le\tau(c)\le T_2\left(\mu,\sqrt{\frac{c}{1+c}}\right).
\end{equation}
In particular, for $0<\epsilon<\sqrt{[\pi(|\mu/\pi-1|^2)\wedge 1]/2}$,
\begin{equation}\label{eq-bmdis2}
 \tau(2\epsilon^2)\le T_2(\mu,\epsilon)\le\frac{6}{\epsilon^4}\tau(\epsilon^2/2)+1.
\end{equation}
\end{itemize}
\end{prop}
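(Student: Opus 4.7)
The plan is to reduce everything to Proposition~\ref{p-ltcomp}, which was proved for abstract Laplace transforms in $\mathcal{V}$. By (\ref{eq-laplace})--(\ref{eq-V}), the squared $L^2$-distance is a Laplace transform $d_2(\mu,t)^2=\mathcal{L}_V(t)$, where $V\in\mathcal{V}$ has jumps of size $|\mu(\phi_i)|^2$ at $2\lambda_i$ in the continuous time case and at $-2\log|\beta_i|$ in the discrete time case. Reading off the definitions of $\lambda_V$ and $\tau_V$ from Section~\ref{s-ct}, this identification yields $\lambda_V(c)=2\lambda_{j(c)}$, $\tau_V(c)=\tau(c)$, and hence the quantity $\alpha_V(c):=\sqrt{\tau_V(c)\lambda_V(c)}$ appearing in Proposition~\ref{p-ltcomp} equals $\sqrt{2}\,\alpha(c)$. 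In addition, $\mathcal{L}_V(0)=\pi(|\mu/\pi|^2)-1$, matching the range of $c$ stated in (\ref{eq-bmcts1}) and (\ref{eq-bmdis1}). In the continuous time case one has $T_V(\delta)=T_2(\mu,\sqrt{\delta})$, whereas in the discrete time case $\mathcal{L}_V$ is strictly decreasing (Lemma~\ref{l-ibp}) and $T_2(\mu,\sqrt{\delta})=\lceil T_V(\delta)\rceil$, so $T_V(\delta)\le T_2(\mu,\sqrt{\delta})\le T_V(\delta)+1$.

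With this dictionary in place, I would derive (\ref{eq-bmcts1}) by applying (\ref{eq-mixing1}) with parameter $\sqrt{2}A$ in place of $A$: the prefactor collapses from $\alpha_V/(\alpha_V+\sqrt{2}A)$ to $\alpha(c)/(\alpha(c)+A)$, and the argument of $T_V$ simplifies to $c+(A+\alpha(c))/(A e^{2A\alpha(c)})$. Converting $T_V$ to $T_2(\mu,\sqrt{\cdot\,})$ and using the trivial inequality $e^{2A\alpha}\ge e^{A\alpha}$ together with the monotonicity of $T_2$ weakens the lower bound to the one stated with $e^{A\alpha(c)A}$. The upper bound $\tau(c)\le T_V(c/(1+c))=T_2(\mu,\sqrt{c/(1+c)})$ is immediate from (\ref{eq-mixing1}). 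For (\ref{eq-bmcts2}) I would simply apply (\ref{eq-bm0}) with $\delta=\epsilon^2$ and translate via $T_V(\epsilon^2)=T_2(\mu,\epsilon)$, noting that the condition $\delta<(\mathcal{L}_V(0)\wedge 1)/2$ matches $\epsilon<\sqrt{[\pi(|\mu/\pi-1|^2)\wedge 1]/2}$.

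The discrete time statements (\ref{eq-bmdis1}) and (\ref{eq-bmdis2}) follow from the same pipeline, with one extra bookkeeping step. The upper bound in (\ref{eq-bmdis1}) and the lower bound in (\ref{eq-bmdis2}) require no modification, since $\tau(c)\le T_V(c/(1+c))\le T_2(\mu,\sqrt{c/(1+c)})$ and $\tau(2\epsilon^2)\le T_V(\epsilon^2)\le T_2(\mu,\epsilon)$ still hold. The lower bound in (\ref{eq-bmdis1}) and the upper bound in (\ref{eq-bmdis2}) each pick up a single $\pm 1$ correction coming from $T_2(\mu,\sqrt{\delta})\le T_V(\delta)+1$; multiplying through by the prefactor $\alpha(c)/(\alpha(c)+A)$ in the first case produces exactly the $-1$ term shown in (\ref{eq-bmdis1}).

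There is no genuine difficulty here: the proof is essentially a dictionary translation from Proposition~\ref{p-ltcomp} into Markov chain language. The only points demanding care are the $\sqrt{2}$ factor between $\alpha_V(c)$ and $\alpha(c)$ (absorbed by the substitution $A\mapsto\sqrt{2}A$), the mild weakening $e^{2A\alpha}\ge e^{A\alpha}$ in the argument of $T_2$, and the integer rounding step in the discrete case. No new analytic estimate beyond Proposition~\ref{p-ltcomp} itself is required.
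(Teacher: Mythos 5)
Your proof is correct and takes essentially the same route as the paper, which simply asserts that (\ref{eq-bmcts1})--(\ref{eq-bmdis2}) ``follow immediately'' from Proposition~\ref{p-ltcomp}; you have filled in the dictionary (notably the factor $\sqrt{2}$ between $\alpha_V(c)=\sqrt{\tau_V(c)\lambda_V(c)}$ and $\alpha(c)=\sqrt{\tau(c)\lambda_{j(c)}}$ arising from the jumps of $V$ being at $2\lambda_i$, absorbed by the substitution $A\mapsto\sqrt{2}A$ and the harmless weakening $e^{2A\alpha}\ge e^{A\alpha}$) and the integer-rounding $T_V(\delta)\le T_2(\mu,\sqrt{\delta})\le T_V(\delta)+1$ in discrete time.
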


\begin{proof}
By Proposition \ref{p-ltcomp}, (\ref{eq-bmcts1})-(\ref{eq-bmdis1}) follow immediately from (\ref{eq-mixing1}) and (\ref{eq-bmcts2})-(\ref{eq-bmdis2}) are obvious from (\ref{eq-bm0}), while $T_2(\mu,\cdot)$ is integer-valued and there is a modification of $-1$ in (\ref{eq-bmdis1}).
\end{proof}

\begin{rem}\label{r-comp}
It is easy to see from (\ref{eq-bmdis2}) that, in Theorem \ref{t-l2cutdis}, the prerequisite of $T_{n,2}(\mu_n,\epsilon_0)\ra\infty$ for some $\epsilon_0>0$ is in fact equivalent to $\tau_n(c)\ra\infty$ for some $c>0$. By (\ref{eq-bmcts2}), such an equivalence also holds in the continuous time case.
\end{rem}

\begin{rem}
Set $\theta=\inf_{n,x}K_n(x,x)$. Clearly, $(K_n-\theta I)/(1-\theta)$ is a stochastic matrix and this implies that the eigenvalues of $K_n$ fall in $[2\theta-1,1]$. Referring to the setting in (\ref{eq-jc}), if $\theta>1/2$, then $\lambda_{n,j_n(c)}\le -\log(2\theta-1)$. In this case, the right sides of (\ref{eq-max1})-(\ref{eq-max2}) turn into the same forms as in Theorem \ref{t-l2cutcts}.
\end{rem}

\subsection{Comparisons of $L^2$-cutoffs}

In the total variation, a comparison of cutoffs was made in \cite{CSal13-1} between continuous time chains and lazy discrete time chains. In this subsection, we consider the same comparison issue in the $L^2$-distance. For convenience, we shall use the following notations only in this subsection. For any discrete time Markov chain $(\mathcal{S},K,\pi)$ and $\theta\in(0,1)$, its $\theta$-lazy version refers to the discrete time chain $(\mathcal{S},K_\theta,\pi)$, where $K_\theta:=\theta I+(1-\theta)K$, and its associated continuous time chain refers to $(\mathcal{S},L,\pi)$, where $L=K-I$.

\begin{thm}\label{t-comparison}
Consider a family of irreducible and reversible discrete time finite Markov chains $\mathcal{F}=(\mu_n,\mathcal{S}_n,K_n,\pi_n)_{n=1}^\infty$. Let $\mathcal{F}_c$ and $\mathcal{F}_\theta$ with $\theta\in(0,1)$ be respective families of continuous time chains and $\theta$-lazy chains associated with $\mathcal{F}$. For $n\ge 1$, let $T_{n,2}^{(c)}(\mu_n,\cdot)$ and $T_{n,2}^{(\theta)}(\mu_n,\cdot)$ be the $L^2$-mixing times of the $n$th chains in $\mathcal{F}_c$ and $\mathcal{F}_\theta$. Assume that $\pi_n(|\mu_n/\pi_n|^2)\ra\infty$.
\begin{itemize}
\item[(1)] For $\theta\in[1/2,1)$, if $\mathcal{F}_\theta$ has a $L^2$-cutoff and $T_{n,2}^{(\theta)}(\mu_n,\epsilon_0)\ra\infty$ for some $\epsilon_0>0$, then $\mathcal{F}_c$ has a $L^2$-cutoff.

\item[(2)] If $\mathcal{F}_c$ has a $L^2$-cutoff and $T_{n,2}^{(c)}(\mu_n,\epsilon_0)\ra\infty$ for some $\epsilon_0>0$, then $\mathcal{F}_\theta$ has a $L^2$-cutoff for all $\theta\in(1/2,1)$.
\end{itemize}

In particular, for $\theta\in(1/2,1)$, if $\mathcal{F}_c$ and $\mathcal{F}_\theta$ have $L^2$-cutoffs and there is $\epsilon_0>0$ such that $T_{n,2}^{(c)}(\mu_n,\epsilon_0)\ra\infty$ or $T_{n,2}^{(\theta)}(\mu_n,\epsilon_0)\ra\infty$, then
\[
 1-\theta\le\liminf_{n\ra\infty}\frac{T_{n,2}^{(c)}(\mu_n,\epsilon)}
 {T_{n,2}^{(\theta)}(\mu_n,\epsilon)}\le\limsup_{n\ra\infty}
 \frac{T_{n,2}^{(c)}(\mu_n,\epsilon)}{T_{n,2}^{(\theta)}(\mu_n,\epsilon)}
 \le\frac{-\log(2\theta-1)}{2},\quad\forall \epsilon>0.
\]
\end{thm}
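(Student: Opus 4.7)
The plan is to reduce everything to a direct eigenvalue comparison between $\lambda^{(c)}_{n,i} := 1 - \beta_{n,i}$ for $-L_n = I - K_n$ and $\lambda^{(\theta)}_{n,i} := -\log(\theta + (1-\theta)\beta_{n,i})$ for $K_{n,\theta}$, and then invoke Theorems \ref{t-l2cutcts} and \ref{t-l2cutdis}. First, I would observe that when $\theta \ge 1/2$ both conventions for ordering eigenvalues---``$\lambda^{(c)}_{n,i}$ nondecreasing'' and ``$|\beta^{(\theta)}_{n,i}|$ nonincreasing''---collapse to ``$\beta_{n,i}$ nonincreasing,'' so the $L^2(\pi_n)$-orthonormal eigenvectors $\phi_{n,i}$ and the cutoff indices $j_n(c)$ from (\ref{eq-jc}) coincide for $\mathcal{F}_c$ and $\mathcal{F}_\theta$. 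Next, applying $y \le -\log(1-y) \le y/(1-y)$ at $y = (1-\theta)\lambda^{(c)}_{n,i} \in [0, 2(1-\theta)]$ yields
\[
 (1-\theta)\lambda^{(c)}_{n,i} \le \lambda^{(\theta)}_{n,i},
\]
always, and, for $\theta \in (1/2, 1)$,
\[
 \lambda^{(\theta)}_{n,i} \le \tfrac{-\log(2\theta-1)}{2}\,\lambda^{(c)}_{n,i}.
\]
Plugging these into the spectral expansions (\ref{eq-l2dis}) and (\ref{eq-l2cts}) gives the distance comparisons $d^{(\theta)}_{n,2}(\mu_n, m) \le d^{(c)}_{n,2}(\mu_n, (1-\theta)m)$ and (for $\theta > 1/2$) $d^{(\theta)}_{n,2}(\mu_n, m) \ge d^{(c)}_{n,2}\bigl(\mu_n, \tfrac{-\log(2\theta-1)}{2}m\bigr)$, hence
\[
 (1-\theta)\bigl(T^{(\theta)}_{n,2}(\mu_n, \epsilon)-1\bigr) \le T^{(c)}_{n,2}(\mu_n, \epsilon) \le \tfrac{-\log(2\theta-1)}{2}\,T^{(\theta)}_{n,2}(\mu_n, \epsilon),
\]
the right-hand inequality requiring $\theta > 1/2$.

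Part (2) then follows easily. Assuming $\mathcal{F}_c$ has a cutoff and $T^{(c)}_{n,2}(\mu_n, \epsilon_0) \to \infty$, Theorem \ref{t-l2cutcts}(2) gives $T^{(c)}_{n,2}(\mu_n, \epsilon)\lambda^{(c)}_{n, j_n(c)} \to \infty$; multiplying the two lower bounds $\lambda^{(\theta)}_{n, j_n(c)} \ge (1-\theta)\lambda^{(c)}_{n, j_n(c)}$ and $T^{(\theta)}_{n,2}(\mu_n, \epsilon) \ge \tfrac{2}{-\log(2\theta-1)}T^{(c)}_{n,2}(\mu_n, \epsilon)$ yields $T^{(\theta)}_{n,2}(\mu_n, \epsilon)\lambda^{(\theta)}_{n, j_n(c)} \to \infty$, and Theorem \ref{t-l2cutdis}(3) concludes. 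For part (1) with $\theta \in (1/2, 1)$ the argument is symmetric. The main obstacle is the $\theta = 1/2$ case of part (1), because the upper comparison of $\lambda^{(\theta)}$ by $\lambda^{(c)}$ is no longer linear (indeed $\lambda^{(1/2)}_{n,i}$ may be $+\infty$ for a fixed index), so the clean $\tau_n$-divergence transfer breaks down. I would handle this via the sharper inequality $1 - e^{-x} \ge \min(x, 1)/2$ applied at $x = \lambda^{(\theta)}_{n,i}$, giving the uniform bound $\lambda^{(c)}_{n,i} \ge \min(\lambda^{(\theta)}_{n,i}, 1)/(2(1-\theta))$; combining with $T^{(c)}_{n,2}(\mu_n, \epsilon) \ge (1-\theta)(T^{(\theta)}_{n,2}(\mu_n, \epsilon)-1)$ from the first paragraph and case-splitting according to whether $\lambda^{(\theta)}_{n, j_n(c)} \ge 1$, one shows $T^{(c)}_{n,2}(\mu_n, \epsilon)\lambda^{(c)}_{n, j_n(c)} \to \infty$ by exploiting $T^{(\theta)}_{n,2}(\mu_n, \epsilon_0) \to \infty$ together with Theorem \ref{t-l2cutdis}(2) applied to $\mathcal{F}_\theta$. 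Theorem \ref{t-l2cutcts}(3) then delivers the cutoff for $\mathcal{F}_c$.

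For the final ratio bound with $\theta \in (1/2, 1)$, the two-sided sandwich from the first paragraph immediately gives everything: under either hypothesis $T^{(c)}_{n,2}(\mu_n, \epsilon_0) \to \infty$ or $T^{(\theta)}_{n,2}(\mu_n, \epsilon_0) \to \infty$, the same bounds force both to diverge at rate $\epsilon$, and dividing through by $T^{(\theta)}_{n,2}(\mu_n, \epsilon)$ before taking $\liminf$ and $\limsup$ produces
\[
 1 - \theta \le \liminf_{n\to\infty} \tfrac{T^{(c)}_{n,2}(\mu_n, \epsilon)}{T^{(\theta)}_{n,2}(\mu_n, \epsilon)} \le \limsup_{n\to\infty} \tfrac{T^{(c)}_{n,2}(\mu_n, \epsilon)}{T^{(\theta)}_{n,2}(\mu_n, \epsilon)} \le \tfrac{-\log(2\theta-1)}{2}.
\]
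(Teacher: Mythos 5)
Your overall strategy mirrors the paper's: both reduce the theorem to a direct comparison of the eigenvalues $\lambda^{(c)}_{n,i}=1-\beta_{n,i}$ and $\lambda^{(\theta)}_{n,i}=-\log(\theta+(1-\theta)\beta_{n,i})$, transfer the cutoff criterion through the $\lambda_{n,j_n(c)}$ indices, and handle $\theta=1/2$ separately via a $\min(\cdot,1)$-style bound. Your handling of $\theta=1/2$ is essentially the paper's argument: the paper observes $\lambda^{(c)}_{n,j}\ge\min\{-\log\beta_{n,j}^{(1/2)},1\}$ and splits the product $\tau_n(c)\lambda^{(c)}_{n,j_n(c)}$ into the two pieces that independently diverge; your $1-e^{-x}\ge\min(x,1)/2$ variant is an equivalent device. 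Also, you work directly with the $L^2$-distance expansions (\ref{eq-l2dis})--(\ref{eq-l2cts}) and the resulting mixing-time sandwich, whereas the paper phrases everything through the quantities $\tau_n(c)$, $\tau_{n,\theta}(c)$ defined in (\ref{eq-tauc}); these are two presentations of the same comparison.

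However, there is a concrete gap in the upper spectral bound. You claim that $-\log(1-y)\le y/(1-y)$ at $y=(1-\theta)\lambda^{(c)}_{n,i}\in[0,2(1-\theta)]$ yields
\[
 \lambda^{(\theta)}_{n,i}\le\frac{-\log(2\theta-1)}{2}\,\lambda^{(c)}_{n,i},
\]
but it does not. Since $y\le 2(1-\theta)$, the inequality $-\log(1-y)\le y/(1-y)$ only gives
\[
 \lambda^{(\theta)}_{n,i}\le\frac{(1-\theta)\lambda^{(c)}_{n,i}}{1-2(1-\theta)}
 =\frac{1-\theta}{2\theta-1}\,\lambda^{(c)}_{n,i},
\]
and the constant $\tfrac{1-\theta}{2\theta-1}$ is strictly larger than $\tfrac{-\log(2\theta-1)}{2}$ for every $\theta\in(1/2,1)$ (equivalently, $1/u-1>-\log u$ for $u=2\theta-1\in(0,1)$). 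To get the sharp constant appearing in the theorem's final ratio bound you need the chord (secant) inequality for the convex function $y\mapsto-\log(1-y)$ on $[0,2(1-\theta)]$, namely $-\log(1-y)\le\tfrac{-\log(2\theta-1)}{2(1-\theta)}\,y$; this is exactly what the paper's second inequality in (\ref{eq-log}), $\log t\ge\tfrac{\log a}{1-a}(1-t)$ with $a=2\theta-1$, encodes. Your weaker constant suffices for the existence statements in parts (1) and (2), but it only proves $\limsup_n T^{(c)}_{n,2}/T^{(\theta)}_{n,2}\le\tfrac{1-\theta}{2\theta-1}$, not the claimed $\tfrac{-\log(2\theta-1)}{2}$. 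You should also explicitly note, as the paper does in Remark \ref{r-lazy}, that $\pi_n(|\mu_n K_{n,\theta}/\pi_n|^2)\to\infty$ follows from $\pi_n(|\mu_n/\pi_n|^2)\to\infty$, which is required to invoke Theorem \ref{t-l2cutdis} for the lazy chains.
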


\begin{rem}\label{r-lazy}
Refer to Theorem \ref{t-comparison} and let $(\mu_n,\mathcal{S}_n,K_{n,\theta},\pi_n)$ be the $\theta$-lazy version of the $n$th chain in $\mathcal{F}$. Consider the following computations.
\begin{align}
 \pi_n\left(\left|\frac{\mu_n}{\pi_n}\right|^2\right)\ge
 \pi_n\left(\left|\frac{\mu_nK_{n,\theta}}{\pi_n}\right|^2\right)
 &=\pi_n\left(\left|\theta\frac{\mu_n}{\pi_n}+(1-\theta)\frac{\mu_nK_n}{\pi_n}
 \right|^2\right)\notag\\&\ge\theta^2\pi_n\left(\left|\frac{\mu_n}{\pi_n}\right|^2\right).\notag
\end{align}
This implies that $\pi_n(|\mu_n/\pi_n|^2)\ra\infty$ if and only if $\pi_n(|\mu_nK_{n,\theta}/\pi_n|^2)\ra\infty$ for all $\theta\in(0,1)$.
\end{rem}

\begin{rem}
In \cite{CSal13-1}, Chen and Saloff-Coste proved that, when $\mathcal{F}_c$ and $\mathcal{F}_\theta$ present cutoffs in the total variation, the ratio of their cutoff times tends to a constant dependent on $\theta$ but independent of Markov chains. In general, this observation can fail in the $L^2$-distance. To see an example, let $\pi_n$ be a probability on $\mathcal{S}_n=\{0,1,...,n\}$ and $K_n(x,y)=r\delta_x(y)+(1-r)\pi(y)$, where $r\in(0,1)$ and $\delta_x$ is the Dirac delta function. For $\theta\in(0,1)$, let $K_{n,\theta}$ be the $\theta$-lazy version of $K_n$ and $L_n=K_n-I$. It is easy to see that $1-r$ and $\theta+(1-\theta)r$ are eigenvalues of $-L_n$ and $K_{n,\theta}$ with multiplicities $n$.
Referring to the notations in (\ref{eq-jc})-(\ref{eq-tauc}), we use $j_n(c),j_{n,\theta}(c)$ and $\tau_n(c),\tau_{n,\theta}(c)$ to denote the corresponding constants associated with $L_n,K_{n,\theta}$. When $\mu_n=\delta_{x_n}$ with $x_n\in\mathcal{S}_n$ and $1/\pi_n(x_n)-1>c$, one has $j_n(c)=j_{n,\theta}(c)=1$ and
\[
 \tau_n(c)=\frac{\log(1/\pi_n(x_n)-1)}{2(1-r)},\quad
 \tau_{n,\theta}(c)=\frac{\log(1/\pi_n(x_n)-1)}{-2\log(\theta+(1-\theta)r)}.
\]
By Theorems \ref{t-l2cutcts}-\ref{t-l2cutdis}, if $\pi_n(x_n)\ra 0$, then $\mathcal{F}_c$ and $\mathcal{F}_\theta$ have $L^2$-cutoffs with cutoff times $\tau_n(c)$ and $\tau_{n,\theta}(c)$. Note that
\[
 \frac{\tau_n(c)}{\tau_{n,\theta}(c)}=\frac{-\log(\theta+(1-\theta)r)}
 {1-r},
\]
where the right side takes values on $(1-\theta,-\log\theta)$ when $r$ ranges over $(0,1)$.
\end{rem}

\begin{proof}[Proof of Theorem \ref{t-comparison}]
We first make some spectral analysis for chains in $\mathcal{F}_c$ and $\mathcal{F}_\theta$.
Let $(\mu_n,\mathcal{S}_n,K_n,\pi_n)$ be the $n$th chain in $\mathcal{F}$ and let $\beta_{n,0}=1>\beta_{n,1}\ge\cdots\ge\beta_{n,|\mathcal{S}_n|-1}$ be eigenvalues of $K_n$. Set $\lambda_{n,i}=1-\beta_{n,i}$ and $\beta_{n,i}^{(\theta)}=\theta+(1-\theta)\beta_{n,i}$. It is easy to see that, for the $n$th chains in $\mathcal{F}_c$ and $\mathcal{F}_\theta$, the infinitesimal generator and the transition matrix have eigenvalues $(-\lambda_{n,i})_{i=0}^{|\mathcal{S}_n|-1}$ and $(\beta_{n,i}^{(\theta)})_{i=0}^{|\mathcal{S}_n|-1}$ with common $L^2(\pi_n)$-orthonormal right eigenvectors. Let $j_n(c),j_{n,\theta}(c)$ and $\tau_n(c),\tau_{n,\theta}(c)$ be the constants in (\ref{eq-jc})-(\ref{eq-tauc}) for the $n$th chains in $\mathcal{F}_c,\mathcal{F}_\theta$. Note that $\beta_{n,i}^{(\theta)}\ge 2\theta-1$ for all $i\ge 1$. When $\theta\in[1/2,1)$, one has $\beta_{n,|\mathcal{S}_n|-1}^{(\theta)}\ge 0$. This implies $j_n(c)=j_{n,\theta}(c)$ for all $c>0$ and, by the following inequalities,
\begin{equation}\label{eq-log}
 \log t\le t-1,\quad\forall 0<t\le 1,\quad \log t\ge\frac{\log a}{1-a}(1-t),\quad\forall 0<a<t\le 1,
\end{equation}
we have
\begin{equation}\label{eq-betacomp}
 -\log\beta_{n,i}^{(\theta)}\begin{cases}\ge (1-\theta)\lambda_{n,i}&\text{for }\theta\in[1/2,1),\\
 \le2^{-1}[-\log(2\theta-1)]\lambda_{n,i}&\text{for }\theta\in(1/2,1),\end{cases}
\end{equation}
and, for all $c>0$,
\begin{equation}\label{eq-tauncomp}
 \tau_{n,\theta}(c)\begin{cases}\le(1-\theta)^{-1}\tau_n(c)&\text{for }\theta\in[1/2,1),\\
 \ge2[-\log(2\theta-1)]^{-1}\tau_n(c)&\text{for }\theta\in(1/2,1).\end{cases}
\end{equation}

Now, we are ready to prove this theorem. For (1), let $\theta\in[1/2,1)$ and assume that $\mathcal{F}_\theta$ has a $L^2$-cutoff with $T_{n,2}^{(\theta)}(\mu_n,\epsilon_0)\ra\infty$ for some $\epsilon_0>0$. By Remark \ref{r-lazy} and Theorem \ref{t-l2cutdis}, one has
\begin{equation}\label{eq-tauntheta}
 \tau_{n,\theta}(c)\left(-\log\beta_{n,j_{n,\theta}(c)}^{(\theta)}\right)\ra\infty,\quad
 \tau_{n,\theta}(c)\ra\infty,\quad \forall c>0.
\end{equation}
For the case $\theta\in(1/2,1)$, one may use the second inequality in (\ref{eq-betacomp}) and the first inequality in (\ref{eq-tauncomp}) to conclude $\tau_n(c)\lambda_{n,j_n(c)}\ra\infty$ for all $c>0$. By Theorem \ref{t-l2cutcts}, this implies that $\mathcal{F}_c$ has a $L^2$-cutoff. For the case $\theta=1/2$, note that if
$\beta_{n,j}^{(1/2)}\in[0,1/2]$, then $\lambda_{n,j}=2\left(1-\beta_{n,j}^{(1/2)}\right)\ge 1$. If $\beta_{n,j}^{(1/2)}\in(1/2,1)$, then the application of the second inequality in (\ref{eq-log}) with $a=1/2$ yields
$\lambda_{n,j}\ge -\log\beta_{n,j}^{(1/2)}$. As a consequence, we obtain $\lambda_{n,j}\ge\min\left\{-\log\beta_{n,j}^{(1/2)},1\right\}$. By the first inequality in (\ref{eq-tauncomp}) and (\ref{eq-tauntheta}), this leads to
\[
 \tau_n(c)\lambda_{n,j_n(c)}\ge\frac{1}{2}
 \min\left\{\tau_{n,1/2}(c)\left(-\log\beta_{n,j_n(c)}^{(1/2)}\right),\tau_{n,1/2}(c)\right\}\ra\infty,
\]
which proves that $\mathcal{F}_c$ has a $L^2$-cutoff.

For (2), assume that $\mathcal{F}_c$ has a $L^2$-cutoff and, for some $\epsilon_0>0$, $T_{n,2}^{(c)}(\mu_n,\epsilon_0)\ra\infty$. By Theorem \ref{t-l2cutcts}, $\tau_n(c)\lambda_{n,j_n(c)}\ra\infty$ and $\tau_n(c)\ra\infty$ for all $c>0$. Combining the first inequality in (\ref{eq-betacomp}) and the second inequality in (\ref{eq-tauncomp}), we obtain (\ref{eq-tauntheta}) for $\theta\in(1/2,1)$ and, by Theorem \ref{t-l2cutdis}, $\mathcal{F}_\theta$ has a $L^2$-cutoff. The comparison of the $L^2$-cutoff times is immediate from (\ref{eq-tauncomp}).
\end{proof}

\begin{rem}\label{r-comp2}
From the proof of Theorem \ref{t-comparison}, we would like to remark the observation that, for $\theta\in(1/2,1)$, $T_{n,2}^{(\theta)}(\mu_n,\epsilon)\ra\infty$ for some $\epsilon>0$ if and only if $T_{n,2}^{(c)}(\mu_n,\epsilon)\ra\infty$ for some $\epsilon>0$. Note that this can also be proved using Proposition \ref{p-bm} and (\ref{eq-tauncomp}).
\end{rem}

In the following corollary, the laziness is combined with $\mathcal{F}$ and the comparison of cutoffs between $\mathcal{F}$ and $\mathcal{F}_c$ is summarized from Theorem \ref{t-comparison}.

\begin{cor}\label{c-comparison}
Let $\mathcal{F}$ be a family of irreducible and reversible discrete time finite Markov chain and $\mathcal{F}_c$ be the family of continuous time chains associated with $\mathcal{F}$. Assume that $\inf_{n,x}K_n(x,x)>1/2$, $\pi_n(|\mu_n/\pi_n|^2)\ra\infty$ and there is $\epsilon_0>0$ such that $T_{n,2}(\mu_n,\epsilon_0)\ra 0$ or $T_{n,2}^{(c)}(\mu_n,\epsilon_0)\ra\infty$. Then, $\mathcal{F}$ has a $L^2$-cutoff if and only if $\mathcal{F}_c$ has a $L^2$-cutoff.
\end{cor}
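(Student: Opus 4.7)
The plan is to reduce the statement to a direct application of Theorem \ref{t-comparison} by viewing $\mathcal{F}$ itself as a lazy version of another family. Set $\theta_0 := \inf_{n,x} K_n(x,x)$, so that $\theta_0 \in (1/2, 1)$ by hypothesis (the value $\theta_0 = 1$ being precluded by irreducibility once $|\mathcal{S}_n| \geq 2$). I would then define $P_n := (K_n - \theta_0 I)/(1-\theta_0)$; a routine check shows that each $P_n$ is a stochastic matrix, irreducible, and reversible with respect to $\pi_n$, and that $K_n = \theta_0 I + (1-\theta_0)P_n$ is exactly the $\theta_0$-lazy version of $P_n$.

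The key observation is the time-rescaling identity
\[
 P_n - I = \frac{K_n - I}{1-\theta_0} = \frac{L_n}{1-\theta_0},
\]
which means that the continuous time chain associated with $P_n$ is a deterministic time change by the factor $1/(1-\theta_0)$ of the $n$th chain in $\mathcal{F}_c$. Consequently, the continuous time family of $\mathcal{F}' := (\mu_n, \mathcal{S}_n, P_n, \pi_n)$ has the same $L^2$-cutoff behavior as $\mathcal{F}_c$, with the corresponding $L^2$-mixing times differing by the multiplicative factor $1-\theta_0$; in particular, either mixing time diverges iff the other does.

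Now I would apply Theorem \ref{t-comparison} to $\mathcal{F}'$ with $\theta = \theta_0 \in (1/2, 1)$: the $\theta_0$-lazy family $\mathcal{F}'_{\theta_0}$ is precisely $\mathcal{F}$, and $\mathcal{F}'_c$ is the time rescaling of $\mathcal{F}_c$ described above. The hypothesis $\pi_n(|\mu_n/\pi_n|^2) \to \infty$ carries over unchanged, and Remark \ref{r-comp2} applied to $\mathcal{F}'$ (combined with the time rescaling) ensures that divergence of either $T_{n,2}(\mu_n, \epsilon_0)$ or $T_{n,2}^{(c)}(\mu_n, \epsilon_0)$ forces divergence of the corresponding mixing times of $\mathcal{F}'_{\theta_0}$ and $\mathcal{F}'_c$. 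The two implications of Theorem \ref{t-comparison} then close both directions: from a $L^2$-cutoff for $\mathcal{F} = \mathcal{F}'_{\theta_0}$, part (1) yields a $L^2$-cutoff for $\mathcal{F}'_c$, hence for $\mathcal{F}_c$; conversely, from a $L^2$-cutoff for $\mathcal{F}_c$ (equivalently $\mathcal{F}'_c$), part (2) produces a $L^2$-cutoff for $\mathcal{F}'_{\theta_0} = \mathcal{F}$.

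The proof is essentially mechanical once the reparametrization is introduced; the only non-routine point is recognizing the scalar identity $P_n - I = L_n/(1-\theta_0)$ that makes $\mathcal{F}_c$ and $\mathcal{F}'_c$ interchangeable for cutoff questions, so no substantial obstacle should arise.
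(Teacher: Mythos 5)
Your proposal is correct and mirrors the paper's argument: the paper likewise sets $\theta=\inf_{n,x}K_n(x,x)$, defines $\widetilde{K}_n=(K_n-\theta I)/(1-\theta)$, notes $K_n=\theta I+(1-\theta)\widetilde{K}_n$ and $e^{t(K_n-I)}=e^{(1-\theta)t(\widetilde{K}_n-I)}$, and then applies Theorem~\ref{t-comparison} together with Remark~\ref{r-comp2} to the family $(\mu_n,\mathcal{S}_n,\widetilde{K}_n,\pi_n)$. Your slightly more spelled-out version (the stochasticity/irreducibility/reversibility check for $P_n$ and the explicit use of the time rescaling to transfer the divergence hypothesis) is exactly what the paper's terse ``follows immediately'' is appealing to.
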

\begin{proof}
Set $\theta=\inf_{n,x}K_n(x,x)$ and $\widetilde{K}_n=(K_n-\theta I)/(1-\theta)$. The proof follows immediately from the observation of $K_n=\theta I+(1-\theta)\widetilde{K}_n$ and $e^{t(K_n-I)}=e^{(1-\theta)t(\widetilde{K}_n-I)}$, and the application of Theorem \ref{t-comparison} and Remark \ref{r-comp2} to the family of $(\mu_n,\mathcal{S}_n,\widetilde{K}_n,\pi_n)_{n=1}^\infty$.
\end{proof}

\section{Products chains}\label{s-cexp}

In this section, we consider families of continuous time product chains. Let
\begin{equation}\label{eq-f}
 \mathcal{F}=\{(\mu_{n,i},\mathcal{S}_{n,i},L_{n,i},\pi_{n,i})|1\le i\le \ell_n,n\ge 1\}
\end{equation}
be a triangular array of irreducible continuous time finite Markov chains and \begin{equation}\label{eq-p}
 \mathcal{P}=\{p_{n,i}|1\le i\le \ell_n,n\ge 1\}
\end{equation}
be a triangular array of positive reals satisfying $p_{n,1}+\cdots+p_{n,\ell_n}\le 1$. For $n\ge 1$, set $\mathcal{S}_n=\mathcal{S}_{n,1}\times\cdots\times \mathcal{S}_{n,\ell_n}$, $\mu_n=\mu_{n,1}\times\dots\times\mu_{n,\ell_n}$, $\pi_n=\pi_{n,1}\times\cdots\times\pi_{n,\ell_n}$ and define
\begin{equation}\label{eq-ln}
 L_n=\sum_{i=1}^{\ell_n}p_{n,i}I_{n,1}\otimes\cdots\otimes I_{n,i-1}\otimes L_{n,i}\otimes I_{n,i+1}\otimes\cdots\otimes I_{n,\ell_n},
\end{equation}
where $I_{n,i}$ is the identity matrix indexed by $\mathcal{S}_{n,i}$ and $M\otimes M'$ denotes the tensor product of matrices $M$ and $M'$. In what follows, we write $\mathcal{F}^\mathcal{P}$ for $(\mu_n,\mathcal{S}_n,L_n,\pi_n)_{n=1}^\infty$ and call it the family of product chains induced by $\mathcal{F}$ and $\mathcal{P}$.

\subsection{The $L^2$-cutoffs of product chains}

Referring to the setting in (\ref{eq-ln}), if $H_{n,i,t}=e^{tL_{n,i}}$ and $H_{n,t}=e^{tL_n}$, then
\begin{equation}\label{eq-prodcts}
 H_{n,t}=H_{n,1,p_1t}\otimes\cdots\otimes H_{n,\ell_n,p_{\ell_n}t}.
\end{equation}
This leads to the following proposition.

\begin{prop}\label{p-prod}
Let $\mathcal{F},\mathcal{P}$ be as in \textnormal{(\ref{eq-f})-(\ref{eq-p})} and $\mathcal{F}^\mathcal{P}$ be the family of product chains induced by $\mathcal{F}$ and $\mathcal{P}$. For $n\ge 1$ and $1\le i\le \ell_n$, let $d_{n,2}(\mu_n,\cdot)$ and $d_{n,i,2}(\mu_{n,i},\cdot)$ be the $L^2$-distances of $(\mu_n,\mathcal{S}_n,L_n,\pi_n)$ and $(\mu_{n,i},\mathcal{S}_{n,i},L_{n,i},\pi_{n,i})$. Then, $\mathcal{F}^\mathcal{P}$ has a $L^2$-cutoff if and only if there is a sequence of positive reals $(t_n)_{n=1}^\infty$ such that
\[
 \lim_{n\ra\infty}\sum_{i=1}^{\ell_n}d_{n,i,2}(\mu_{n,i},ap_{n,i}t_n)^2=\begin{cases}
 0&\text{for }a>1,\\\infty&\text{for }0<a<1.\end{cases}
\]
Further, if $T_{n,2}(\mu_n,\cdot)$ is the $L^2$-mixing time of $(\mu_n,\mathcal{S}_n,L_n,\pi_n)$ and
\[
 \mathcal{T}_n(\epsilon)=\min\left\{t\ge 0\bigg|\sum_{i=1}^{\ell_n}d_{n,i,2}(\mu_{n,i},p_{n,i}t)^2\le\epsilon\right\},
\]
then
\begin{equation}\label{eq-prodmixing}
 T_{n,2}(\mu_n,\sqrt{e^\epsilon-1})\le \mathcal{T}_n(\epsilon)\le T_{n,2}(\mu_n,\sqrt{\epsilon}).
\end{equation}
\end{prop}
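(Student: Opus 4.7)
The plan is to reduce everything to a single multiplicative identity linking the $L^2$-distance of the product chain to those of its factors. Writing \textnormal{(\ref{eq-prodcts})} coordinate-wise gives $\mu_n H_{n,t}/\pi_n=\prod_{i=1}^{\ell_n}(\mu_{n,i}H_{n,i,p_{n,i}t}/\pi_{n,i})$ as a function on the product space $\mathcal{S}_n$. Applying Fubini and the elementary identity $\pi(|f/\pi|^2)=1+\|f/\pi-1\|_{L^2(\pi)}^2$ (valid for any probability $f$ absolutely continuous with respect to $\pi$), I would obtain
\begin{equation}\label{eq-prodid}
 1+d_{n,2}(\mu_n,t)^2=\prod_{i=1}^{\ell_n}\bigl(1+d_{n,i,2}(\mu_{n,i},p_{n,i}t)^2\bigr),\quad\forall t\ge 0.
\end{equation}

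Next I would combine \textnormal{(\ref{eq-prodid})} with the two elementary inequalities
\[
 \sum_{i=1}^{\ell_n}u_i\le\prod_{i=1}^{\ell_n}(1+u_i)-1\le\exp\Bigl(\sum_{i=1}^{\ell_n}u_i\Bigr)-1,\qquad u_i\ge 0,
\]
(the left one by expanding the product, the right one from $1+u\le e^u$). Plugging in $u_i=d_{n,i,2}(\mu_{n,i},p_{n,i}t)^2$ yields the master sandwich
\begin{equation}\label{eq-master}
 \sum_{i=1}^{\ell_n}d_{n,i,2}(\mu_{n,i},p_{n,i}t)^2\le d_{n,2}(\mu_n,t)^2\le\exp\Bigl(\sum_{i=1}^{\ell_n}d_{n,i,2}(\mu_{n,i},p_{n,i}t)^2\Bigr)-1.
\end{equation}

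Both assertions of the proposition then drop out of \textnormal{(\ref{eq-master})}. For the mixing-time bounds, at $t=T_{n,2}(\mu_n,\sqrt{\epsilon})$ the left half forces $\sum_id_{n,i,2}(\mu_{n,i},p_{n,i}t)^2\le\epsilon$, so $\mathcal{T}_n(\epsilon)\le T_{n,2}(\mu_n,\sqrt{\epsilon})$; conversely, at $t=\mathcal{T}_n(\epsilon)$ the right half yields $d_{n,2}(\mu_n,t)^2\le e^{\epsilon}-1$, hence $T_{n,2}(\mu_n,\sqrt{e^{\epsilon}-1})\le\mathcal{T}_n(\epsilon)$. For the cutoff equivalence, \textnormal{(\ref{eq-master})} shows that along any sequence $s_n\ge 0$ one has $d_{n,2}(\mu_n,s_n)\ra 0$ iff $\sum_id_{n,i,2}(\mu_{n,i},p_{n,i}s_n)^2\ra 0$, and $d_{n,2}(\mu_n,s_n)\ra\infty$ iff the same sum tends to $\infty$. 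Specializing $s_n=at_n$ for $a>1$ and $0<a<1$ then translates the definition of a $L^2$-cutoff for $\mathcal{F}^\mathcal{P}$ into exactly the condition displayed in the statement.

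No step here is genuinely hard; the only point deserving real care is the derivation of \textnormal{(\ref{eq-prodid})}, which is a short Fubini calculation exploiting the tensor structure in \textnormal{(\ref{eq-prodcts})} together with the factorisation of the density of $\mu_n H_{n,t}$ with respect to $\pi_n$ as the coordinate-wise product of the densities $\mu_{n,i}H_{n,i,p_{n,i}t}/\pi_{n,i}$.
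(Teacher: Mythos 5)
Your proposal is correct and matches the paper's own argument step for step: you derive the same multiplicative identity (the paper's equation \textnormal{(\ref{eq-l2product})}) from the tensor factorisation \textnormal{(\ref{eq-prodcts})}, sandwich it with the same $\sum u_i\le\prod(1+u_i)-1\le\exp(\sum u_i)-1$ inequalities, and read off both the mixing-time bounds and the cutoff equivalence. The only difference is that you spell out the two final deductions slightly more explicitly than the paper does.
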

\begin{proof}
By (\ref{eq-prodcts}), one has
\begin{equation}\label{eq-l2product}
 d_{n,2}(\mu_n,t)^2=\prod_{i=1}^{\ell_n}\left(d_{n,i,2}(\mu_{n,i},p_{n,i}t)^2+1\right)
 -1.
\end{equation}
This implies
\[
 \sum_{i=1}^{\ell_n}d_{n,i,2}(\mu_{n,i},p_{n,i}t)^2\le d_{n,2}(\mu_n,t)^2
 \le\exp\left\{\sum_{i=1}^{\ell_n}d_{n,i,2}(\mu_{n,i},p_{n,i}t)^2\right\}-1.
\]
The remaining of the proof follows from the above inequalities.
\end{proof}

\begin{rem}\label{r-prod1}
In general, the identity in (\ref{eq-l2product}) does not hold in the discrete time case. To see the details, let $\mathcal{F}=\{(\mu_{n,i},\mathcal{S}_{n,i},K_{n,i},\pi_{n,i})|1\le i\le \ell_n,n\ge 1\}$, $\mathcal{P}$ be as in (\ref{eq-p}) and $\mathcal{F}^{\mathcal{P}}=(\mu_n,\mathcal{S}_n,K_n,\pi_n)_{n=1}^\infty$, where
\[
  K_n=p_{n,0}I+\sum_{i=1}^{\ell_n}p_{n,i}I_{n,1}\otimes\cdots\otimes I_{n,i-1}\otimes K_{n,i}\otimes I_{n,i+1}\otimes\cdots\otimes I_{n,\ell_n},
\]
and $p_{n,0}=1-(p_{n,1}+\cdots+p_{n,\ell_n})$. For simplicity, we assume that $K_{n,i}$ is reversible and let $\{\beta_{n,i,j}|0\le j<|\mathcal{S}_{n,i}|\}$ and $\{\phi_{n,i,j}|0\le j<|\mathcal{S}_{n,i}|\}$ be eigenvalues and $L^2(\pi_{n,i})$-orthonormal right eigenvectors of $K_{n,i}$. For $J=(j_1,...,j_{\ell_n})$ with $0\le j_i<|\mathcal{S}_{n,i}|$ and $1\le i\le \ell_n$, set $\beta_{n,J}=p_{n,0}+\sum_{i=1}^{\ell_n}p_{n,i}\beta_{n,i,j_i}$ and $\phi_{n,J}=\phi_{n,1,j_1}\otimes\cdots\otimes\phi_{n,\ell_n,j_{\ell_n}}$. It is easy to see that $\beta_{n,J}$'s are eigenvalues of $K_n$ with $L^2(\pi_n)$-orthonormal right eigenvectors $\phi_{n,J}$'s. As a consequence, if $\beta_{n,i,0}=1$, then the $L^2$-distance, $d_{n,2}(\mu_n,\cdot)$, of $(\mu_n,\mathcal{S}_n,K_n,\pi_n)$ satisfies
\[
 d_{n,2}(\mu_n,m)^2=\sum_{J:J\ne\mathbf{0}}|\mu_n(\phi_{n,J})|^2\beta_{n,J}^{2m},
\]
where $\mathbf{0}=(0,0,...,0)$ and $\mu_n(\phi_{n,J})=\prod_{i=1}^{\ell_n}\mu_{n,i}(\phi_{n,i,j_i})$. In the continuous time case of (\ref{eq-f})-(\ref{eq-ln}), if $\{\lambda_{n,i,j}|0\le j<|\mathcal{S}_{n,i}|\}$ are eigenvalues of $L_{n,i}$ with $L^2(\pi_{n,i})$-orthonormal right eigenvectors $\{\phi_{n,i,j}|0\le j<|\mathcal{S}_{n,i}|\}$, then $\lambda_{n,J}=\sum_{i=1}^{\ell_n}p_{n,i}\lambda_{n,i,j_i}$ is an eigenvalue of $-L_n$ with right eigenvector $\phi_{n,J}$ defined as before. When $\lambda_{n,i,0}=0$, this implies
\[
 d_{n,2}(\mu_n,t)^2=\sum_{J:J\ne\mathbf{0}}|\mu_n(\phi_{n,J})|^2e^{-t\lambda_{n,J}},
\]
which is exactly the formula in (\ref{eq-l2product}). It is worth while to note that, in Proposition \ref{p-prod}, the reversibility is not required.
\end{rem}

\begin{thm}\label{t-prod}
Let $\mathcal{F},\mathcal{P}$ be the triangular arrays in \textnormal{(\ref{eq-f})-(\ref{eq-p})}. Assume that chains in $\mathcal{F}$ are reversible and let $\lambda_{n,i,0}=0$, $\lambda_{n,i,1}$,...,$\lambda_{n,i,|\mathcal{S}_{n,i}|-1}$ be eigenvalues of $-L_{n,i}$ with $L^2(\pi_{n,i})$-orthonormal right eigenvectors $\phi_{n,i,0}=\mathbf{1}$, $\phi_{n,i,1}$,...,$\phi_{n,i,|\mathcal{S}_{n,i}|-1}$. Set
\[
 \left\{\rho_{n,l}\bigg|1\le l\le \sum_{i=1}^{\ell_n}|\mathcal{S}_{n,i}|-\ell_n\right\}=\{p_{n,i}\lambda_{n,i,j}|1\le j<|\mathcal{S}_{n,i}|,1\le i\le \ell_n\}
\]
in the way that $\rho_{n,l}\le \rho_{n,l+1}$ and arrange accordingly
\[
 \left\{\psi_{n,l}\bigg|1\le l\le \sum_{i=1}^{\ell_n}|\mathcal{S}_{n,i}|-\ell_n\right\}=\{\mu_{n,i}(\phi_{n,i,j})|1\le j<|\mathcal{S}_{n,i}|,1\le i\le \ell_n\}.
\]
Let $T_{n,2}(\mu_n,\cdot)$ be the $L^2$-mixing time of the $n$th chain in $\mathcal{F}^\mathcal{P}$ and, for $c>0$, define
\begin{equation}\label{eq-jnprod}
 \widetilde{j}_n(c)=\min\left\{j\ge 1\bigg|\sum_{l=1}^j\psi_{n,l}^2>c\right\}
\end{equation}
and
\begin{equation}\label{eq-taunprod}
 \widetilde{\tau}_n(c)=\max_{j\ge j_n(c)}\left\{\frac{\log\left(1+\sum_{l=1}^j|\psi_{n,l}|^2\right)}{2\rho_{n,j}}\right\}
\end{equation}
Then, $\mathcal{F}^\mathcal{P}$ has a $L^2$-cutoff if and only if $\widetilde{\tau}_n(c)\rho_{n,\widetilde{j}_n(c)}\ra\infty$ for all $c>0$. Further, if $\mathcal{F}^\mathcal{P}$ has a $L^2$-cutoff, then $\widetilde{\tau}_n(c)$ is a cutoff time and, for all $\epsilon>0$ and $c>0$,
\begin{equation}\label{eq-prodmixing2}
 |T_{n,2}(\mu_n,\epsilon)-\widetilde{\tau}_n(c)|
 =O\left(\sqrt{\widetilde{\tau}_n(c)/\rho_{n,\widetilde{j}_n(c)}}\right).
\end{equation}
\end{thm}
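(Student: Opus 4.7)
The plan is to reduce Theorem \ref{t-prod} to the Laplace-transform framework of Section \ref{s-ct} via the sandwich in Proposition \ref{p-prod}. Define $\widetilde{V}_n\in\mathcal{V}$ to be the step function placing a jump of $|\psi_{n,l}|^2$ at $\lambda=2\rho_{n,l}$ for each $l$, in the manner of \eqref{eq-V}, so that
\[
 \mathcal{L}_{\widetilde{V}_n}(t)=\sum_{l}|\psi_{n,l}|^2e^{-2t\rho_{n,l}}=\sum_{i=1}^{\ell_n}d_{n,i,2}(\mu_{n,i},p_{n,i}t)^2.
\]
With this identification, the quantities introduced before Theorem \ref{t-ltcutoff} become $\lambda_{\widetilde{V}_n}(c)=2\rho_{n,\widetilde{j}_n(c)}$ and, because $\log(1+\widetilde{V}_n(\lambda))/\lambda$ is maximized at the left endpoint of each constancy interval of $\widetilde{V}_n$, $\tau_{\widetilde{V}_n}(c)=\widetilde{\tau}_n(c)$.

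Next, combining \eqref{eq-l2product} with the elementary inequality $\prod_i(1+a_i)-1\le e^{\sum_ia_i}-1$ (the engine of Proposition \ref{p-prod}) yields the sandwich
\[
 \mathcal{L}_{\widetilde{V}_n}(t)\le d_{n,2}(\mu_n,t)^2\le\exp\{\mathcal{L}_{\widetilde{V}_n}(t)\}-1.
\]
From this, for any sequence $t_n>0$ and any $a>0$, $\mathcal{L}_{\widetilde{V}_n}(at_n)\ra 0$ iff $d_{n,2}(\mu_n,at_n)\ra 0$, and $\mathcal{L}_{\widetilde{V}_n}(at_n)\ra\infty$ iff $d_{n,2}(\mu_n,at_n)\ra\infty$. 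Hence $\mathcal{F}^{\mathcal{P}}$ presents an $L^2$-cutoff if and only if $(\mathcal{L}_{\widetilde{V}_n})$ presents a cutoff in the sense of Definition \ref{d-cutoff}, with identical cutoff times. Applying the equivalence (1)$\Lra$(4) of Theorem \ref{t-ltcutoff} to $(\widetilde{V}_n)$ and substituting the identifications above proves that the $L^2$-cutoff is equivalent to $\widetilde{\tau}_n(c)\rho_{n,\widetilde{j}_n(c)}\ra\infty$ for all $c>0$, and the ``in particular'' clause of Theorem \ref{t-ltcutoff} shows that $\widetilde{\tau}_n(c)$ is then a cutoff time.

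Finally, for \eqref{eq-prodmixing2}, combine \eqref{eq-prodmixing} with \eqref{eq-mixbound2}. From \eqref{eq-prodmixing}, $T_{\widetilde{V}_n}(\epsilon^2)\le T_{n,2}(\mu_n,\epsilon)\le T_{\widetilde{V}_n}(\log(1+\epsilon^2))$, while \eqref{eq-mixbound2} applied to $\widetilde{V}_n$ gives $|T_{\widetilde{V}_n}(\delta)-\widetilde{\tau}_n(c)|=O(\sqrt{\widetilde{\tau}_n(c)/\rho_{n,\widetilde{j}_n(c)}})$ for every fixed $\delta>0$ (with $\delta$-dependent implicit constant). Applying this at $\delta=\epsilon^2$ and $\delta=\log(1+\epsilon^2)$ and sandwiching yields \eqref{eq-prodmixing2}. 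The only delicate point in the whole argument is keeping the factor $2$ from \eqref{eq-V} consistent in the definition of $\widetilde{V}_n$ and translating between the Laplace transform of $d_{n,2}^2$ and the Laplace transform $\mathcal{L}_{\widetilde{V}_n}$ of $\sum_id_{n,i,2}^2$ through the exponential sandwich; no idea is needed beyond Proposition \ref{p-prod} and Theorem \ref{t-ltcutoff}.
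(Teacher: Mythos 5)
Your proposal is correct and follows essentially the same route as the paper's proof: both identify $\sum_i d_{n,i,2}(\mu_{n,i},p_{n,i}t)^2$ with a Laplace transform $\mathcal{L}_{\widetilde V_n}(t)$, pass the $L^2$-cutoff of $\mathcal{F}^\mathcal{P}$ through the Proposition \ref{p-prod} sandwich to a cutoff of $(\mathcal{L}_{\widetilde V_n})$, apply Theorem \ref{t-ltcutoff}, and obtain \eqref{eq-prodmixing2} by combining \eqref{eq-mixbound2} with \eqref{eq-prodmixing}. The only difference is cosmetic: you make the identifications $\lambda_{\widetilde V_n}(c)=2\rho_{n,\widetilde j_n(c)}$ and $\tau_{\widetilde V_n}(c)=\widetilde\tau_n(c)$ fully explicit, whereas the paper simply notes that $f_n$ is a Laplace transform of a discrete measure and cites Theorem \ref{t-ltcutoff} directly.
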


\begin{proof}
Let $\mathcal{T}_n$ be as in Proposition \ref{p-prod} and set
\[
 f_n(t):=\sum_{l\ge 1}\psi_{n,l}^2e^{-2\rho_{n,l}t}=\sum_{i=1}^{\ell_n}d_{n,i,2}(\mu_{n,i},p_{n,i}t)^2.
\]
By Proposition \ref{p-prod}, $\mathcal{F}^{\mathcal{P}}$ has a $L^2$-cutoff if and only if $(f_n)_{n=1}^\infty$ has a cutoff. Note that $f_n$ can be regarded as a Laplace transform of some discrete measure on $[0,\infty)$. By Theorem \ref{t-ltcutoff}, $(f_n)_{n=1}^\infty$ has a cutoff if and only if $\widetilde{\tau}_n(c)\rho_{n,\widetilde{j}_n(c)}\ra\infty$ for all $c>0$. Further, as a consequence of
(\ref{eq-mixbound2}), if $(f_n)_{n=1}^\infty$ has a cutoff, then
\[
 |\mathcal{T}_n(\epsilon)-\widetilde{\tau}_n(c)|
 =O\left(\sqrt{\widetilde{\tau}_n(c)/\rho_{n,\widetilde{j}_n(c)}}\right),
 \quad\forall c,\epsilon\in(0,\infty).
\]
The desired comparison in (\ref{eq-prodmixing2}) is then given by the above identity and (\ref{eq-prodmixing}).
\end{proof}

\begin{rem}\label{r-prod2}
Note that $\widetilde{j}_n,\widetilde{\tau}_n$ in Theorem \ref{t-prod} are different from $j_n,\tau_n$ in Theorem \ref{t-l2cutcts}, while Lemma \ref{l-jtaucomp} provides a comparison between each other, which is crucial for the discussion in Example \ref{ex-counterexample}.
\end{rem}

\subsection{Products of two-state chains}

In this subsection, we restrict ourselves to products of two-state chains and derive a simplified method to determine cutoffs from Theorem \ref{t-prod}. For convenience, we shall restrict ourselves to the continuous time case and all chains in $\mathcal{F}^\mathcal{P}$ will be assumed to start at $\mathbf{0}$, the zero vector.

\begin{thm}\label{t-prod2state}
Let $\mathcal{F},\mathcal{P}$ be triangular arrays in \textnormal{(\ref{eq-f})-(\ref{eq-p})} with $\mathcal{S}_{n,i}=\{0,1\}$, $\mu_{n,i}=\delta_0$ and
\[
 L_{n,i}=\left(\begin{array}{cc}-A_{n,i}&A_{n,i}\\B_{n,i}&-B_{n,i}
 \end{array}\right).
\]
For $n\ge 1$, let $T_{n,2}(\mathbf{0},\cdot)$ be the $L^2$-mixing time of the $n$th chain in $\mathcal{F}^\mathcal{P}$. Suppose that $p_{n,i}\le p_{n,i+1}$ for $1\le i<\ell_n$ and there are a constant $R>1$ and a sequence of positive reals $r_n$ such that
\begin{equation}\label{eq-arn}
 R^{-1}r_n\le A_{n,i}\le Rr_n,\quad R^{-1}r_n\le B_{n,i}\le Rr_n,\quad\forall 1\le i\le \ell_n,\,n\ge 1.
\end{equation}
Then, $\mathcal{F}^{\mathcal{P}}$ has a $L^2$-cutoff if and only if
\begin{equation}\label{eq-2statel2cut}
 \lim_{n\ra\infty}\max_{j\ge 1}\frac{\log (1+j)}{p_{n,j}/p_{n,1}}=\infty.
\end{equation}
Moreover, assuming that $p_{n,i}(A_{n,i}+B_{n,i})$ is increasing in $i$ for all $n\ge 1$, one has
\begin{equation}\label{eq-nocutmixing}
 R^{-2}t_n\le T_{n,2}(\mathbf{0},\epsilon)\le 40R^2\epsilon^{-4}t_n,\quad
 \forall 0<\epsilon<1/(\sqrt{2}R),
\end{equation}
and, further, if \textnormal{(\ref{eq-2statel2cut})} holds, then
\[
 T_{n,2}(\mathbf{0},\epsilon)=t_n+O(b_n),\quad\forall \epsilon>0,
\]
where
\begin{equation}\label{eq-tnbn}
 t_n=\max_{j\ge 1}\frac{\log(1+j)}{2p_{n,j}(A_{n,j}+B_{n,j})},\quad b_n=\sqrt{\frac{t_n}{r_np_{n,1}}}.
\end{equation}
\end{thm}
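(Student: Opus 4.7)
The plan is to apply Theorem~\ref{t-prod} to the product chain after extracting the spectral data of each factor. For the two-state generator $L_{n,i}$ the nonzero eigenvalue of $-L_{n,i}$ is $A_{n,i}+B_{n,i}$, and the corresponding $L^2(\pi_{n,i})$-orthonormal right eigenvector $\phi_{n,i,1}$ satisfies $\phi_{n,i,1}(0)=\sqrt{A_{n,i}/B_{n,i}}$, so $|\mu_{n,i}(\phi_{n,i,1})|^2=A_{n,i}/B_{n,i}\in[R^{-2},R^2]$ by~(\ref{eq-arn}). In the notation of Theorem~\ref{t-prod}, $\{\rho_{n,l}\}$ is a non-decreasing rearrangement of $\{p_{n,i}(A_{n,i}+B_{n,i})\}_{i=1}^{\ell_n}$ while the corresponding $\psi_{n,l}^2$'s all lie in $[R^{-2},R^2]$; a standard sandwiching of order statistics combined with (\ref{eq-arn}) and $p_{n,i}\le p_{n,i+1}$ gives $\rho_{n,l}\asymp r_np_{n,l}$ uniformly, and under the additional hypothesis that $p_{n,i}(A_{n,i}+B_{n,i})$ is non-decreasing the rearrangement is trivial and $\rho_{n,l}=p_{n,l}(A_{n,l}+B_{n,l})$.

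For the cutoff equivalence, Theorem~\ref{t-prod} says $\mathcal{F}^\mathcal{P}$ has an $L^2$-cutoff if and only if $\widetilde\tau_n(c)\rho_{n,\widetilde j_n(c)}\to\infty$ for every $c>0$, and by Lemma~\ref{l-cond34} it suffices to verify this for small $c$. Since each $\psi_{n,l}^2\ge R^{-2}$, for $c<R^{-2}$ one has $\widetilde j_n(c)=1$ and $\rho_{n,\widetilde j_n(c)}=\rho_{n,1}\asymp r_np_{n,1}$. The elementary convexity inequalities $R^{-2}\log(1+j)\le\log(1+R^{-2}j)$ and $\log(1+R^2j)\le R^2\log(1+j)$ (verifiable by differentiation) yield $\log(1+\sum_{l=1}^j\psi_{n,l}^2)\asymp\log(1+j)$ with constants depending only on $R$, and combining with $\rho_{n,l}\asymp r_np_{n,l}$ gives
\[
\widetilde\tau_n(c)\rho_{n,1}\asymp\max_{j\ge1}\frac{p_{n,1}\log(1+j)}{p_{n,j}},
\]
so the cutoff criterion is exactly~(\ref{eq-2statel2cut}).

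For the $L^2$-mixing bounds, I would invoke the monotonicity hypothesis on $p_{n,i}(A_{n,i}+B_{n,i})$, so that $\rho_{n,j}=p_{n,j}(A_{n,j}+B_{n,j})$ is non-decreasing in~$j$. The same convexity estimates then produce $R^{-2}t_n\le\widetilde\tau_n(c)\le R^2t_n$ for every $c<R^{-2}$; more importantly, bounding the ratio $(1+\sum\psi^2)/(1+j)$ inside $[R^{-2},R^2]$ gives the additive refinement $|\widetilde\tau_n(c)-t_n|\le(\log R)/\rho_{n,1}\asymp1/(r_np_{n,1})$. The two-sided bound (\ref{eq-nocutmixing}) now follows by chaining $\mathcal{T}_n(\epsilon^2)\le T_{n,2}(\mathbf{0},\epsilon)\le\mathcal{T}_n(\log(1+\epsilon^2))$ from Proposition~\ref{p-prod} with Proposition~\ref{p-bm} applied to the Laplace transform $f_n(t)=\sum_l\psi_{n,l}^2e^{-2\rho_{n,l}t}$, namely $\widetilde\tau_n(2\delta)\le\mathcal{T}_n(\delta)\le(6/\delta^2)\widetilde\tau_n(\delta/2)$; the hypothesis $\epsilon<1/(\sqrt2R)$ keeps both $2\epsilon^2$ and $\log(1+\epsilon^2)$ below $R^{-2}\le f_n(0)$ so the application is legal, and $\log(1+\epsilon^2)\ge\epsilon^2/2$ absorbs the factor $6$ into the constant $40$. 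When cutoff holds, Theorem~\ref{t-prod} yields $|T_{n,2}(\mathbf{0},\epsilon)-\widetilde\tau_n(c)|=O(\sqrt{\widetilde\tau_n(c)/\rho_{n,1}})=O(b_n)$, and the cutoff hypothesis forces $r_np_{n,1}t_n\to\infty$, so $|\widetilde\tau_n(c)-t_n|=O(1/(r_np_{n,1}))=O(b_n^2/t_n)=o(b_n)$; combining the two errors produces $T_{n,2}(\mathbf{0},\epsilon)=t_n+O(b_n)$ as claimed.

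The main obstacle is the bookkeeping between the indexed quantity $t_n$ and the intrinsic quantity $\widetilde\tau_n(c)$ built from the sorted $\rho$'s and the partial sums $\sum_{l=1}^j\psi_{n,l}^2$; both the uniform spectral bound~(\ref{eq-arn}) and the monotonicity hypothesis on $p_{n,i}(A_{n,i}+B_{n,i})$ are essential to make this comparison additive rather than merely multiplicative, which is precisely what the order-$b_n$ cutoff window demands.
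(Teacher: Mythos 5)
Your proposal is correct and follows essentially the same route as the paper's proof: extract the two-state spectral data, reduce to Theorem~\ref{t-prod}, use~(\ref{eq-arn}) together with the order-statistics sandwich to get $\psi_{n,l}^2\in[R^{-2},R^2]$ and $\rho_{n,l}\asymp r_np_{n,l}$, show $\widetilde{j}_n(c)=1$ for $c<R^{-2}$, compare $\log(1+\sum\psi^2)$ with $\log(1+j)$ multiplicatively for the cutoff criterion and additively (error $\le 2\log R$) for the refined window, then chain Proposition~\ref{p-prod} with~(\ref{eq-bm0}) for the no-cutoff bound and with~(\ref{eq-prodmixing2}) for the cutoff-time estimate. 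The one place you streamline slightly is the ``conversely'' direction of the cutoff criterion: you invoke Lemma~\ref{l-cond34} to reduce the verification to $c<R^{-2}$, whereas the paper instead checks directly that $\widetilde{j}_n(c)$ stays bounded for every fixed $c$ and that $p_{n,j}s_n(j)\to\infty$ propagates from $j=1$ to all $j$; both arguments are valid and of comparable length.
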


\begin{proof}
Note that $-(A_{n,i}+B_{n,i})$ is the non-zero eigenvalue of $L_{n,i}$ with $L^2(\pi_{n,i})$-orthonormal right eigenvector $\phi_{n,i}=(\sqrt{A_{n,i}/B_{n,i}},\sqrt{B_{n,i}/A_{n,i}})$. Let $\rho_{n,i}$ be an increasing arrangement of $p_{n,i}(A_{n,i}+B_{n,i})$ and $\psi_{n,i}$ be an arrangement of $\sqrt{A_{n,i}/B_{n,i}}$ accordingly. For $c>0$, let $\widetilde{j}_n(c),\widetilde{\tau}_n(c)$ be constants defined in (\ref{eq-jnprod})-(\ref{eq-taunprod}). By Theorem \ref{t-prod}, $\mathcal{F}^\mathcal{P}$ has a $L^2$-cutoff if and only if $\widetilde{\tau}_n(c)\rho_{n,\widetilde{j}_n(c)}\ra\infty$ for all $c>0$.

Based on the assumption of (\ref{eq-arn}), it is easy to see that
\begin{equation}\label{eq-aa}
 R^{-1}\le\psi_{n,i}\le R,\quad 2R^{-1}r_np_{n,i}\le\rho_{n,i}\le 2Rr_np_{n,i},
 \quad\forall 1\le i\le \ell_n.
\end{equation}
Using the following inequalities.
\[
  \forall t>0,\quad \log(1+at)-a\log(1+t)\begin{cases}<0&\text{for }a>1,\\>0&\text{for }0<a<1,\end{cases}
\]
one may derive from (\ref{eq-aa}) that
\begin{equation}\label{eq-jnc}
 R^{-2}\log(1+j)\le\log\left(1+\sum_{i=1}^j\psi_{n,i}^2\right)\le R^2\log(1+j),
\end{equation}
and then
\[
 \frac{1}{4R^3r_n}s_n(\widetilde{j}_n(c))\le\widetilde{\tau}_n(c)\le \frac{R^3}{4r_n}s_n(\widetilde{j}_n(c)),
\]
where
\[
 s_n(l)=\max_{j\ge l}\left\{\frac{\log(1+j)}{p_{n,j}}\right\}.
\]
As a consequence, $\mathcal{F}^{\mathcal{P}}$ has a $L^2$-cutoff if and only if $p_{n,\widetilde{j}_n(c)}s_n(\widetilde{j}_n(c))\ra\infty$ for all $c>0$.

Let $R$ be the constant as before. By the first inequality of (\ref{eq-aa}), one has $j_n(c)=1$ for all $0<c<R^{-2}$ and $n\ge 1$. This implies that if $\mathcal{F}^{\mathcal{P}}$ has a $L^2$-cutoff, then $p_{n,1}s_n(1)\ra\infty$. Conversely, we assume that $p_{n,1}s_n(1)\ra\infty$. Note that
\[
 p_{n,i}s_n(i)\le \max\{\log j,p_{n,j}s_n(j)\},\quad\forall i\le j.
\]
As a result, this implies $p_{n,j}s_n(j)\ra\infty$ for all $j\ge 1$. Following (\ref{eq-jnc}), we obtain that, for any $c>0$, $\widetilde{j}_n(c)$ is bounded and this leads to $p_{n,\widetilde{j}_n(c)}s_n(\widetilde{j}_n(c))\ra\infty$, which proves the equivalence of the $L^2$-cutoff of $\mathcal{F}^\mathcal{P}$.

To bound the $L^2$-mixing time, we assume that $p_{n,i}(A_{n,i}+B_{n,i})$ is increasing in $i$ for all $n\ge 1$. In this case, $\rho_{n,i}=p_{n,i}(A_{n,i}+B_{n,i})$ and, by (\ref{eq-jnc}), one has
\begin{equation}\label{eq-nocutmixing1}
 \widetilde{j}_n(c)=1,\quad R^{-2}t_n\le\widetilde{\tau}_n(c)\le R^2t_n,\quad\forall c\in(0,R^{-2}),
\end{equation}
where $t_n$ is the constant in (\ref{eq-tnbn}). Let $\mathcal{T}_n(\epsilon)$ be the corresponding constant in Proposition \ref{p-prod}. As a result, we have
\begin{equation}\label{eq-nocutmixing2}
 \mathcal{T}_n(\epsilon^2)\le T_{n,2}(\mathbf{0},\epsilon)\le\mathcal{T}_n(\log(1+\epsilon^2))
 \le\mathcal{T}_n(\epsilon^2/2),\quad\forall \epsilon\in(0,1),
\end{equation}
where the last inequality uses the fact of $\log(1+t)\ge t/(1+t)$ for all $t\ge 0$. By Proposition \ref{p-ltcomp}, (\ref{eq-bm0}) yields
\begin{equation}\label{eq-nocutmixing3}
 \widetilde{\tau}_n(2\delta)\le\mathcal{T}_n(\delta)
 \le\frac{12}{\delta^2}\widetilde{\tau}_n(\delta/2),\quad\forall \delta\in(0,1/(2R^2)).
\end{equation}
Consequently, (\ref{eq-nocutmixing}) follows immediately from (\ref{eq-nocutmixing1}), (\ref{eq-nocutmixing2}) and (\ref{eq-nocutmixing3}).

To estimate the $L^2$-cutoff time, we assume that (\ref{eq-2statel2cut}) holds. Let $t_n,b_n$ be those constants in (\ref{eq-tnbn}) and $c\in(0,R^{-2})$. As before, we have $\widetilde{j}_n(c)=1$ for all $n\ge 1$ and
\[
 \widetilde{\tau}_n(c)=\max_{j\ge 1}\frac{\log(1+\sum_{l=1}^j|\psi_{n,l}|^2)}{2p_{n,j}(A_{n,j}+B_{n,j})}.
\]
By (\ref{eq-aa}), one may derive
\[
 \log(1+j)-2\log R\le\log\left(1+\sum_{l=1}^j|\psi_{n,l}|^2\right)\le\log(1+j)+2\log R,
\]
and, as a result of (\ref{eq-arn}), this yields
\begin{equation}\label{eq-tauntn}
 |\widetilde{\tau}_n(c)-t_n|\le\frac{\log R}{p_{n,1}(A_{n,1}+B_{n,1})}\le\frac{R\log R}{2r_np_{n,1}}.
\end{equation}
It is easy to check, using (\ref{eq-2statel2cut}), that $(r_np_{n,1})^{-1}=o(b_n)$ and $b_n=o(t_n)$. Consequently, (\ref{eq-tauntn}) leads to $\widetilde{\tau}_n(c)\sim t_n$ and, hence,
\[
 |\widetilde{\tau}_n(c)-t_n|=o(b_n),\quad
 \widetilde{\tau}_n(c)/\rho_{n,\widetilde{j}_n(c)}\asymp b_n^2.
\]
The desired identity for the $L^2$-mixing time is then given by (\ref{eq-prodmixing2}).
\end{proof}

In the next theorem, we consider specific triangular arrays $\mathcal{P}$.

\begin{thm}\label{t-prod2state2}
Let $\mathcal{F}$ be a triangular array in \textnormal{(\ref{eq-f})} with
\[
 \mathcal{S}_{n,i}=\{0,1\},\quad
 L_{n,i}=\left(\begin{array}{cc}-A_{n,i}&A_{n,i}\\B_{n,i}&-B_{n,i}
 \end{array}\right),
\]
and assume
\[
 A_{n,i}+B_{n,i}=A_{n,1}+B_{n,1},\quad\forall i\ge 1,\quad 0<\inf_{i,n}\frac{A_{n,i}}{B_{n,i}}
 \le\sup_{i,n}\frac{A_{n,i}}{B_{n,i}}<\infty.
\]
Consider a sequence of positive integers $(x_n)_{n=1}^\infty$ and a positive function $f$ defined on $(0,\infty)$. Let $\mathcal{P}$ be a triangular array in \textnormal{(\ref{eq-p})} given by
\[
 p_{n,i}=\frac{p_{n,1}f(x_n+i-1)}{f(x_n)},\quad p_{n,1}\le \frac{f(x_n)}{\sum_{i=1}^{\ell_n}f(x_n+i-1)}.
\]
\begin{itemize}
\item[(1)] If $f(t)=e^{at}$ with $a>0$, then $\mathcal{F}^\mathcal{P}$ has no $L^2$-cutoff.

\item[(2)] If $f(t)=\exp\{a[\log(1+t)]^b\}$ with $a>0$ and $b>0$, then
\[
 \mathcal{F}^\mathcal{P}\text{ has a $L^2$-cutoff}\quad\Lra\quad x_n\wedge\ell_n\ra\infty.
\]
Further, if $x_n\wedge\ell_n\ra\infty$, then
\begin{equation}\label{eq-cutofftime}
 T_{n,2}(\mathbf{0},\epsilon)=\frac{\kappa_n}{2(A_{n,1}+B_{n,1})p_{n,1}}+
 O\left(\frac{\sqrt{\kappa_n}}{(A_{n,1}+B_{n,1})p_{n,1}}\right),
 \quad\forall\epsilon>0,
\end{equation}
where $\kappa_n=(\log x_n-b\log\log x_n)\wedge\log\ell_n$.

\item[(3)] If $f(t)=[\log(1+t)]^a$ with $a>0$, then
\[
 \mathcal{F}^\mathcal{P}\text{ has a $L^2$-cutoff}\quad\Lra\quad \begin{cases}x_n\wedge\ell_n\ra\infty&\text{for }a\ge 1,\\\ell_n\ra\infty&\text{for }0<a<1.\end{cases}
\]
Further, if $a\ge 1$ and $x_n\wedge\ell_n\ra\infty$, then \textnormal{(\ref{eq-cutofftime})} holds with $\kappa_n=(\log x_n)\wedge(\log\ell_n)$. If $0<a<1$ and $\ell_n\ra\infty$, then \textnormal{(\ref{eq-cutofftime})} holds with $\kappa_n=[\log(1+x_n\wedge\ell_n)]^a(\log \ell_n)^{1-a}$.
\end{itemize}

Moreover, for Case (1), for Case (2) with $x_n\wedge\ell_n=O(1)$ and for Case (3) with $x_n\wedge\ell_n=O(1)$, when $a\ge 1$, and $\ell_n=O(1)$, when $0<a<1$, one has
\[
 T_{n,2}(\mathbf{0},\epsilon)\asymp \frac{1}{(A_{n,1}+B_{n,1})p_{n,1}},\quad\forall \epsilon\in(0,S/\sqrt{2}),
\]
where $S=\inf_{n,i}\{(A_{n,i}\wedge B_{n,i})/(A_{n,1}+B_{n,1})\}$.
\end{thm}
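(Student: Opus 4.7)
The strategy is to reduce Theorem \ref{t-prod2state2} to Theorem \ref{t-prod2state}. Setting $r_n = A_{n,1}+B_{n,1}$, the hypothesis that the ratios $A_{n,i}/B_{n,i}$ lie in a fixed compact subinterval of $(0,\infty)$ combined with $A_{n,i}+B_{n,i}\equiv r_n$ forces $A_{n,i},B_{n,i} \asymp r_n$ uniformly in $i,n$, so the bound \textnormal{(\ref{eq-arn})} holds with the choice $R = 1/S$. Each $f$ under consideration is increasing on $(0,\infty)$, hence $p_{n,i}$ and $p_{n,i}(A_{n,i}+B_{n,i}) = r_n p_{n,i}$ are nondecreasing in $i$. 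Theorem \ref{t-prod2state} then reduces the cutoff question and the computation of cutoff time / window to the single quantity
\[
\kappa_n := \max_{1\le j\le\ell_n}\log(1+j)\,\frac{f(x_n)}{f(x_n+j-1)},
\]
with cutoff iff $\kappa_n\to\infty$, cutoff time $t_n = \kappa_n/[2 r_n p_{n,1}]$, and window $O(\sqrt{\kappa_n}/(r_n p_{n,1}))$; when $\kappa_n = O(1)$ one obtains $T_{n,2}(\mathbf{0},\epsilon)\asymp 1/(r_n p_{n,1})$ for $\epsilon\in(0, 1/(\sqrt{2} R)) = (0, S/\sqrt{2})$, matching the range in the ``moreover'' clause.

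It remains to compute $\kappa_n$ case by case via logarithmic differentiation of the continuous profile $G_n(j) := \log(1+j)\,f(x_n)/f(x_n+j-1)$. In \emph{Case (1)}, $G_n(j) = \log(1+j)e^{-a(j-1)}$ is a fixed bounded profile independent of $n$, so $\kappa_n \asymp 1$ and no cutoff occurs. In \emph{Case (2)}, the first-order condition reads
\[
x_n+j \;\approx\; ab\,(1+j)\log(1+j)\,[\log(x_n+j)]^{b-1};
\]
for $j \ll x_n$ this yields the optimizer $j \asymp x_n/(\log x_n)^b$, at which $\log(1+j) \sim \log x_n - b\log\log x_n$ and the exponent $a[\log(1+x_n)]^b - a[\log(x_n+j)]^b \sim -ab/\log x_n \to 0$, so the exponential factor tends to $1$. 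When this critical $j$ exceeds $\ell_n$ the maximum sits at $j = \ell_n$, where a short expansion gives $G_n(\ell_n)\sim \log\ell_n$; the two regimes combine into $\kappa_n \sim (\log x_n - b\log\log x_n)\wedge\log\ell_n$, which is unbounded iff $x_n\wedge\ell_n\to\infty$. When $x_n\wedge\ell_n = O(1)$, the factor $e^{-a[\log(x_n+j)]^b}$ forces $G_n(j)\to 0$ as $j\to\infty$, hence $\kappa_n = O(1)$. In \emph{Case (3)}, the analogous critical equation $(x_n+j)\log(x_n+j) = a(1+j)\log(1+j)$ admits an interior solution $j\asymp x_n$ for $a > 1$ (giving $G_n \sim \log x_n$), while $G_n$ has no interior critical point for $a \le 1$; one deduces $\kappa_n \sim (\log x_n)\wedge(\log\ell_n)$ when $a\ge 1$. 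For $0 < a < 1$, the boundary evaluation at $j=\ell_n$ splits according to whether $\ell_n\le x_n$ or $\ell_n > x_n$ and combines into $\kappa_n \sim [\log(1+x_n\wedge\ell_n)]^a(\log\ell_n)^{1-a}$, unbounded as soon as $\ell_n\to\infty$ independently of $x_n$. In every no-cutoff subcase of the ``moreover'' clause, the analysis above gives $\kappa_n = O(1)$, while the universal bound $\kappa_n \ge G_n(1) = \log 2$ gives $\kappa_n\asymp 1$; combined with \textnormal{(\ref{eq-nocutmixing})} this yields $T_{n,2}(\mathbf{0},\epsilon) \asymp 1/(r_n p_{n,1})$.

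The most delicate step is Case (2): producing the precise correction $-b\log\log x_n$ in the cutoff time requires expanding $a[\log(x_n+j)]^b - a[\log(1+x_n)]^b$ to an order sharp enough to show this exponent is $o(1)$ at the optimizer, not merely $O(1)$; a crude estimate would degrade the leading $\log x_n$ by a multiplicative constant. A secondary technical point, shared with Case (3), is the matching of the ``interior optimum'' and ``boundary $j=\ell_n$ optimum'' regimes with an error that is absorbed into the window $O(\sqrt{\kappa_n}/(r_n p_{n,1}))$ coming from \textnormal{(\ref{eq-prodmixing2})}.
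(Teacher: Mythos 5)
Your proposal follows essentially the same route as the paper's proof: reduce to Theorem \ref{t-prod2state} (checking that the hypotheses give a uniform $R$ in \textnormal{(\ref{eq-arn})} and that $p_{n,i}(A_{n,i}+B_{n,i})$ is nondecreasing), isolate the quantity $\kappa_n = \max_{1\le j\le\ell_n}\log(1+j)\,f(x_n)/f(x_n+j-1)$, which is exactly the paper's $\Delta_n$, and then analyze it case by case by locating the critical point of the continuous profile $j\mapsto\log(1+j)/f(x_n+j-1)$. The paper carries this out via the explicit unimodality analysis of $F_c$, $G_c$, $\widetilde F_c$, $\widetilde G_c$, showing $t_c\sim(ab)^{-1}c(\log c)^{-b}$ in Case (2) and $\widetilde t_c\sim c/(a-1)$ in Case (3) with $a>1$, and then expanding $\Delta_n$ to $O(1)$; your first-order conditions and asymptotic expansions of the exponent and the optimizer agree with these, including the delicate cancellation that gives the $-b\log\log x_n$ correction and the matching with the $j=\ell_n$ boundary regime. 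The ``moreover'' part is handled the same way, feeding the bounded $\kappa_n$ into \textnormal{(\ref{eq-nocutmixing})}, with your choice $R=1/S$ matching the threshold $\epsilon<S/\sqrt 2$.
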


\begin{proof}
For $n\ge 1$, set $r_n=A_{n,1}+B_{n,1}$ and define
\[
 \Delta_n=\max_{1\le j\le \ell_n}\frac{\log(1+j)}{f(x_n-1+j)/f(x_n)}.
\]
Immediately, one can see that $0<\inf_{i,n}A_{n,i}/r_n\le\sup_{i,n}A_{n,i}/r_n<1$, which is equivalent to (\ref{eq-arn}), and, by Theorem \ref{t-prod2state}, (\ref{eq-2statel2cut}) yields
\begin{equation}\label{eq-2statel2cut2}
 \mathcal{F}^\mathcal{P}\text{ presents a $L^2$-cutoff}\quad\Lra\quad \Delta_n\ra\infty.
\end{equation}
Further, if $\Delta_n\ra\infty$, then (\ref{eq-tnbn}) implies
\begin{equation}\label{eq-2statel2cut3}
 T_{n,2}(\mathbf{0},\epsilon)=t_n+O(b_n),\quad\forall \epsilon>0,
\end{equation}
where
\[
 t_n=\frac{\Delta_n}{2r_np_{n,1}},\quad b_n=\frac{\sqrt{\Delta_n}}{r_np_{n,1}}.
\]
In what follows, we treat $f$ case by case.

For (1), assume that $f(t)=e^{at}$ with $a>0$. In this case, it is easy to see that
\[
 \Delta_n=\max_{1\le j\le \ell_n}\frac{\log(1+j)}{e^{j-1}}=\log 2,
\]
where the last inequality uses the fact that
\[
 \frac{\log(1+j)}{\log j}\le 1+\frac{1}{j\log j}<2,\quad\forall j\ge 2.
\]
As a result, $\mathcal{F}^\mathcal{P}$ has no $L^2$-cutoff for all sequences $x_n$ and $\ell_n$.

For (2), let $f(t)=\exp\{a[\log(1+t)]^b\}$ with $a>0$ and $b>0$. In this case, we define $F_c(t)=\log(1+t)/f(c-1+t)$ for $c\ge 1$ and write $\Delta_n=f(x_n)\max_{1\le j\le \ell_n}F_{x_n}(j)$. In some computations, one can show that
\[
 G_c(t):=(1+t)f(c-1+t)F_c'(t)=1-ab[\log(c+t)]^bg_c(t),\quad\forall t>0,
\]
where
\begin{equation}\label{eq-gc}
 g_c(t)=\frac{(1+t)\log(1+t)}{(c+t)\log(c+t)}.
\end{equation}
Note that the mapping $s\mapsto s\log s$ is strictly increasing on $[e^{-1},\infty)$. This implies $g_c'(t)>0$ for $t>0$ and, hence, $G_c$ is strictly decreasing on $(0,\infty)$. Along with the observation of
\[
 \lim_{t>0,t\ra 0}G_c(t)=1,\quad \lim_{t\ra\infty}G_c(t)=-\infty,\quad\forall c\ge 1,
\]
one may select, for each $c\ge 1$, a constant $t_c\in(0,\infty)$ such that $F_c'(t)>0$ for $t\in(0,t_c)$ and $F_c'(t)<0$ for $t\in(t_c,\infty)$. Consequently, this implies
\[
 \Delta_n=f(x_n)\max\{F_{x_n}((u_n-1)\vee 1),F_{x_n}(u_n)\},
\]
where $u_n=\lceil t_{x_n}\rceil\wedge\ell_n$.

Note that if $x_n\wedge \ell_n$ is bounded, then $u_n=O(1)$ and, hence, $\Delta_n\le \log(1+u_n)=O(1)$, which implies that $\mathcal{F}^\mathcal{P}$ has no $L^2$-cutoff.
Next, we assume that $x_n\wedge\ell_n\ra\infty$. In this setting, one has
\[
 \lim_{c\ra\infty}G_c\left(\frac{Ac}{(\log c)^b}\right)=1-abA,\quad\forall A>0.
\]
Clearly, this implies $t_c\sim(ab)^{-1}c(\log c)^{-b}$ as $c\ra\infty$ and, thus, we have $u_n\sim ((ab)^{-1}x_n(\log x_n)^{-b})\wedge \ell_n=o(x_n)$. To estimate $\Delta_n$, we write
\[
 [\log(1+x_n)]^b=(\log x_n)^b+by_n,\quad [\log(x_n+u_n)]^b=(\log x_n)^b+bz_n,
\]
and
\[
 \log(1+u_n)=\log u_n+v_n,\quad \frac{f(x_n)}{f(x_n-1+u_n)}=e^{ab(y_n-z_n)}=1-abw_n.
\]
It is an easy exercise to derive from the above setting that
\[
 y_n\sim\frac{(\log x_n)^{b-1}}{x_n},\quad z_n\sim\frac{u_n(\log x_n)^{b-1}}{x_n}=O\left(\frac{1}{\log x_n}\right),\quad w_n\sim z_n,\quad v_n\sim\frac{1}{u_n}.
\]
As a consequence, this leads to
\[
 \Delta_n=(1-abw_n)(\log u_n+v_n)=\log u_n+O(1)=\xi_n+O(1),
\]
where $\xi_n=(\log x_n-b\log\log x_n)\wedge\log\ell_n$. By (\ref{eq-2statel2cut2})-(\ref{eq-2statel2cut3}), $\mathcal{F}^\mathcal{P}$ has a $L^2$-cutoff and
\[
 T_{n,2}(\mathbf{0},\epsilon)=\frac{\xi_n}{2r_np_{n,1}}+O\left(\frac{\sqrt{(\log x_n)\wedge(\log\ell_n)}}{r_np_{n,1}}\right),\quad\forall \epsilon>0.
\]

For (3), we assume that $f(t)=[\log(1+t)]^a$ with $a>0$. As before, we set
\[
 \widetilde{F}_c(t)=\frac{\log(1+t)}{f(c-1+t)},\quad \widetilde{G}_c(t)=(1+t)f(c-1+t)\widetilde{F}_c'(t).
\]
Clearly, $\Delta_n=f(x_n)\max_{1\le j\le \ell_n}\widetilde{F}_{x_n}(j)$. In a similar computation, one can show that
\[
 \widetilde{G}_c(t)=1-ag_c(t),\quad \forall t>0.
\]
where $g_c$ is the function in (\ref{eq-gc}). As $g_c'>0$ on $(0,\infty)$, one may conclude that $\widetilde{G}_c$ is strictly decreasing on $(0,\infty)$. Based on the following observation
\[
 \lim_{t>0,t\ra 0}\widetilde{G}_c(t)=1,\quad \lim_{t\ra\infty}\widetilde{G}_c(t)=1-a,
\]
we treat two subcases.

{\bf Case 1: $a>1$.} Clearly, there is $\widetilde{t}_c\in(0,\infty)$ such that $\widetilde{F}_c'>0$ on $(0,\widetilde{t}_c)$ and $\widetilde{F}_c'<0$ on $(\widetilde{t}_c,\infty)$. This implies
\[
 \Delta_n=f(x_n)\max\{\widetilde{F}_{x_n}((\widetilde{u}_n-1)\vee 1),
 \widetilde{F}_{x_n}(\widetilde{u}_n)\}
\]
where $\widetilde{u}_n=\lceil \widetilde{t}_{x_n}\rceil\wedge \ell_n$. Based on the observation of
\[
 \lim_{c\ra\infty}\widetilde{G}_c(Ac)=1-\frac{aA}{A+1},\quad\forall A>0,
\]
one has $t_c\sim c/(a-1)$ as $c\ra\infty$. As a result, there exists $M>0$ such that $\widetilde{u}_n\le M(x_n\wedge\ell_n)$ for $n\ge 1$, which leads to
\[
 \Delta_n\le\log(1+\widetilde{u}_n)\le \log(1+M(x_n\wedge\ell_n)).
\]
By (\ref{eq-2statel2cut2}), if $\liminf_n x_n\wedge\ell_n<\infty$, then $\mathcal{F}^{\mathcal{P}}$ has no $L^2$-cutoff.

Next, assume that $x_n\wedge\ell_n\ra\infty$. In this case, $\widetilde{u}_n\sim(x_n/(a-1))\wedge\ell_n$ and a similar reasoning as in Case 2-2 yields
\[
 \log(1+x_n)=\log x_n+O(1/x_n), \quad \log(x_n+\widetilde{u}_n)=\log x_n+O(1),
\]
and
\[
 \log(1+\widetilde{u}_n)=\log \widetilde{u}_n+O(1/\widetilde{u}_n)=(\log x_n)\wedge (\log \ell_n)+O(1).
\]
This leads to $\Delta_n=(\log x_n)\wedge(\log \ell_n)+O(1)$. By (\ref{eq-2statel2cut3}), $\mathcal{F}^\mathcal{P}$ has a $L^2$-cutoff and
\[
 T_{n,2}(\mathbf{0},\epsilon)=\frac{(\log x_n)\wedge(\log \ell_n)}{2r_np_{n,1}}+O\left(\frac{\sqrt{(\log x_n)\wedge(\log \ell_n)}}{r_np_{n,1}}\right),\quad\forall \epsilon>0.
\]

{\bf Case 2: $0<a\le 1$.} In this case, it is clear that $\widetilde{F}_c'>0$ on $(0,\infty)$ and, hence, one has
\begin{equation}\label{eq-deltan}
 \Delta_n=\frac{f(x_n)\log(1+\ell_n)}{f(x_n+\ell_n-1)}=\left(\frac{\log(1+x_n)}
 {\log(\ell_n+x_n)}\right)^a\log(1+\ell_n).
\end{equation}
Observe that
\[
 \log(1+x_n\vee \ell_n)\le\log(\ell_n+x_n)\le 2\log(1+x_n\vee\ell_n)
\]
and
\begin{equation}\label{eq-xnln}
 [\log(1+x_n)][\log(1+\ell_n)]=[\log(1+x_n\vee \ell_n)][\log(1+x_n\wedge \ell_n)].
\end{equation}
This implies $\Lambda_n/2\le\Delta_n\le\Lambda_n$, where
\begin{equation}\label{eq-Ln}
 \Lambda_n=[\log(1+x_n\wedge\ell_n)]^a[\log(1+\ell_n)]^{1-a}.
\end{equation}
By (\ref{eq-2statel2cut2}), when $a=1$, $\mathcal{F}^\mathcal{P}$ has a $L^2$-cutoff if and only if $x_n\wedge\ell_n\ra\infty$. When $0<a<1$, $\mathcal{F}^\mathcal{P}$ has a $L^2$-cutoff if and only if $\ell_n\ra\infty$. For $a=1$, assuming $x_n\wedge\ell_n\ra\infty$ yields
$\Delta_n=(\log x_n)\wedge(\log\ell_n)+O(1)$ and, by (\ref{eq-2statel2cut3}),
\[
 T_{n,2}(\mathbf{0},\epsilon)=\frac{(\log x_n)\wedge(\log \ell_n)}{2r_np_{n,1}}+O\left(\frac{\sqrt{(\log x_n)\wedge(\log \ell_n)}}{r_np_{n,1}}\right),\quad\forall \epsilon>0.
\]
For $0<a<1$, suppose $\ell_n\ra\infty$. Note that
\[
 1+x_n\vee\ell_n\le x_n+\ell_n\le 2(1+x_n\vee\ell_n).
\]
This implies $\log(x_n+\ell_n)=\log(1+x_n\vee\ell_n)+O(1)$ and, by (\ref{eq-xnln}) and (\ref{eq-Ln}),
\begin{align}
 \Delta_n&=\Lambda_n\left(1+O\left(\frac{1}{\log(x_n\vee\ell_n)}\right)\right)\notag\\
 &=[\log(1+x_n\wedge\ell_n)]^a(\log \ell_n)^{1-a}\left(1+O\left(\frac{1}{\log(x_n\vee\ell_n)}
 +\frac{1}{\ell_n\log\ell_n}\right)\right)\notag\\
 &=[\log(1+x_n\wedge\ell_n)]^a(\log \ell_n)^{1-a}+O(1).\notag
\end{align}
By (\ref{eq-2statel2cut3}), we receive
\[
 T_{n,2}(\mathbf{0},\epsilon)=\frac{\zeta_n}{2r_np_{n,1}}+O\left(\frac{\sqrt{\zeta_n}}
 {r_np_{n,1}}\right),\quad\forall\epsilon>0.
\]
where $\zeta_n=[\log(1+x_n\wedge\ell_n)]^a(\log \ell_n)^{1-a}$.

When a cutoff fails to exist, the bound on the mixing time follows is given by (\ref{eq-nocutmixing}) and the details is omitted.
\end{proof}

\begin{proof}[Proof of Theorem \ref{t-machinery}]
The proof of Theorem \ref{t-machinery} follows immediately from Theorem \ref{t-prod2state2} with the replacement of
\[
 A_{n,i}=A_{x_n+i-1},\quad B_{n,i}=B_{x_n+i-1},\quad p_{n,i}=\frac{p_{x_n+i-1}}{q_n}.
\]
The cutoff times in (\ref{eq-Gcutofftime}) and (\ref{eq-cutofftime}) are somewhat different up to a multiple constant $q_n$ and this result in the accelerating constant $q_n$ in $\mathcal{G}$.
\end{proof}

The goal of the following example is to remark some optimality of Theorem \ref{t-main} and we shall show in the following that, for some $c>0$, the limits in conditions (2)-(3) are not sufficient for an $L^2$-cutoff.

\begin{ex}\label{ex-counterexample}
Consider the triangular arrays $\mathcal{F},\mathcal{P}$ in (\ref{eq-f})-(\ref{eq-p}) with
\[
 \ell_n=2n,\quad\mathcal{S}_{n,i}=\{0,1\},\quad L_{n,i}=\left(\begin{array}{cc}-A_{n,i}&A_{n,i}\\
 B_{n,i}&-B_{n,i}\end{array}\right)
\]
and
\[
 A_{n,i}=\begin{cases}1/n&\forall 1\le i\le n,\\1/\sqrt{n}&\forall n<i\le 2n,\end{cases}\quad B_{n,i}=1,\quad p_{n,i}=\begin{cases}i/n^3&\forall 1\le i\le n,\\(\log i)/n^2&\forall n<i\le 2n.\end{cases}
\]
We first prove that $\mathcal{F}^\mathcal{P}=(\mu_n,\mathcal{S}_n,L_n,\pi_n)_{n=1}^\infty$ has no $L^2$-cutoff. For $n\ge 1$ and $1\le i\le 2n$, set $\rho_{n,i}=p_{n,i}(A_{n,i}+B_{n,i})$ and
\[
 D_n(t)=\sum_{i=1}^{2n}A_{n,i}e^{-2\rho_{n,i}t},\quad
 \mathcal{T}_n(\epsilon)=\min\{t\ge 0|D_n(t)\le\epsilon\}.
\]
By Proposition \ref{p-prod}, $\mathcal{F}^\mathcal{P}$ has a $L^2$-cutoff if and only if $\mathcal{T}_n(\epsilon)\sim\mathcal{T}_n(\delta)$ for all $\epsilon,\delta\in(0,\infty)$. Note that, for $A>0$,
\[
 \sum_{i=1}^nA_{n,i}e^{-2\rho_{n,i}An^2}=\frac{1}{n}
 \sum_{i=1}^n\exp\left\{-\frac{2Ai(1+1/n)}{n}\right\}\sim\int_0^1e^{-2As}ds
 =\frac{1-e^{-2A}}{2A}
\]
and
\begin{align}
 \sum_{i=n+1}^{2n}A_{n,i}e^{-2\rho_{n,i}An^2}&=\frac{1}{\sqrt{n}}\sum_{i=n+1}^{2n}
 \exp\left\{-2A(\log i)\left(1+\frac{1}{\sqrt{n}}\right)\right\}\notag\\
 &\sim\frac{1}{\sqrt{n}}\sum_{i=n+1}^{2n}e^{-2A\log i}=\frac{1}{\sqrt{n}}\sum_{i=n+1}^{2n}i^{-2A}.\notag
\end{align}
It is an easy exercise to show that
\[
 0<\int_n^{2n}s^{-2A}ds-\sum_{i=n+1}^{2n}i^{-2A}\le n^{-2A}
\]
and
\[
 \int_n^{2n}s^{-2A}ds=\begin{cases}\log 2&\text{for }A=1/2,\\\frac{2^{1-2A}-1}{1-2A}n^{1-2A}&\text{for }A\ne1/2.\end{cases}
\]
As a consequence of the above computations, one has
\[
 \lim_{n\ra\infty}D_n(An^2)=\begin{cases}\infty&\text{for }0<A<1/4,\\
 2(\sqrt{2}-e^{-1/2})&\text{for }A=1/4,\\(1-e^{-2A})/(2A)&\text{for }A>1/4,\end{cases}
\]
and this leads to
\begin{equation}\label{eq-tnce}
 \mathcal{T}_n(\epsilon)\sim\begin{cases}n^2/4&\text{for }\epsilon\in(2(1-e^{-1/2}),\infty),\\ C_\epsilon n^2&\text{for }\epsilon\in(0,2(1-e^{-1/2})],\end{cases}
\end{equation}
where $C_\epsilon\ge 1/4$ is the constant such that $(1-e^{-2C_\epsilon})/(2C_\epsilon)=\epsilon$. As the mapping $s\mapsto (1-e^{-s})/s$ is strictly decreasing on $(0,\infty)$, $C_\epsilon>C_\delta$ for $\delta>\epsilon\ge 2(1-e^{-1/2})$. This proves that $\mathcal{F}^\mathcal{P}$ has no $L^2$-cutoff.

Next, we compute the $L^2$-mixing time. By Proposition \ref{p-prod}, (\ref{eq-tnce}) leads to $T_{n,2}(\mathbf{0},\epsilon)\asymp n^2$ for all $\epsilon>0$. Further, by applying the fact of $\alpha(c)\ge\sqrt{\log(1+c)}$ to (\ref{eq-bmcts1}) with $A=1$, one has
\[
 \frac{\sqrt{\log(1+c)}}{\sqrt{\log(1+c)}+1}T_{n,2}\left(\mathbf{0},\sqrt{c+1}\right)
 \le\tau_n(c)\le T_{n,2}\left(\mathbf{0},\sqrt{c/(1+c)}\right),
\]
for all $0<c<(1+1/n)^n(1+1/\sqrt{n})^n-1$. As a result, the constant in (\ref{eq-taun}) satisfies $\tau_n(c)\asymp n^2$ for all $c>0$.

Now, we examine the limits in Theorem \ref{t-main}. Let $j_n(c)$ be the constant in (\ref{eq-jn}), $\widetilde{j}_n(c)$ be the constant in (\ref{eq-jnprod}) and set
\[
 \{\varrho_{n,l}|0\le l<2^{2n}\}=\sigma(-L_n),
\]
where $\rho_{n,l}\le\rho_{n,l+1}$ and $\varrho_l\le\varrho_{l+1}$. By Lemma \ref{l-jtaucomp}, one has $\rho_{n,\widetilde{j}_n(\log(1+c))}\le\varrho_{n,j_n(c)}\le\rho_{n,\widetilde{j}_n(c)}$. It is easy to show that
\[
 \widetilde{j}_n(c)=\begin{cases}cn(1+o(1))&\forall 0<c<1,\\n+(c-1)\sqrt{n}(1+o(1))&\forall c>1,\end{cases}
\]
which implies
\[
 \rho_{n,\widetilde{j}_n(c)}\sim\begin{cases}cn^{-2}&\forall 0<c<1,\\(\log n)n^{-2}&\forall c>1.\end{cases}
\]
Consequently, we obtain
\[
 \varrho_{n,j_n(c)}\begin{cases}\asymp n^{-2}&\forall 0<c<1,\\\sim(\log n)n^{-2}&\forall c>e-1,\end{cases}
\]
and this leads to
\[
 T_{n,2}(\mathbf{0},\epsilon)\varrho_{n,j_n(c)}\begin{cases}\asymp 1&\forall 0<c<1,\\\ra\infty&\forall c>e-1,\end{cases}\quad
 \tau_n(c)\varrho_{n,j_n(c)}\begin{cases}\asymp 1&\forall 0<c<1,\\\ra\infty&\forall c>e-1,\end{cases}
\]
for all $\epsilon>0$.
\end{ex}

%\section*{Acknowledgements}
%We thank Takashi Kumagai for his contribution in the development of the theoretical framework and the preparation of %valuable examples.

\appendix

\section{Proof of Lemma \ref{l-ltcomp}}

We first prove (1). Note that $\lambda_V(c)\in(0,\infty)$ for $c\in(0,\mathcal{L}_V(0))$. By Lemma \ref{l-tvc}, there is $\gamma\ge \lambda_V(c)$ such that $e^{\tau_V(c)\gamma}=1+V(\gamma)$. This implies
\begin{align}
 \mathcal{L}_V(\tau_V(c))&\ge \int_{(0,\gamma]}e^{-\tau_V(c)\lambda}dV(\lambda)\ge
 \frac{V(\gamma)}{e^{\tau_V(c)\gamma}}
 =\frac{V(\gamma)}{1+V(\gamma)}\notag\\&\ge\frac{V(\lambda_V(c))}
 {1+V(\lambda_V(c))}\ge\frac{c}{1+c},\notag
\end{align}
where the last two inequalities use the monotonicity of $x\mapsto x/(1+x)$ on $(0,\infty)$.

Next, we consider the second inequality of (1). By Lemma \ref{l-ibp}, one has
\[
 \mathcal{L}_V(t)=t\int_{(0,\infty)}V(\lambda)e^{-t\lambda}d\lambda\le c+t\int_{[\lambda_V(c),\infty)}V(\lambda)e^{-t\lambda}d\lambda.
\]
Note that $V(\lambda)\le e^{\tau_V(c)\lambda}-1\le e^{\tau_V(c)\lambda}$ for $\lambda\ge\lambda_V(c)$. This implies, for $t>\tau_V(c)$,
\[
 \int_{[\lambda_V(c),\infty)}V(\lambda)e^{-t\lambda}d\lambda
 \le \int_{[\lambda_V(c),\infty)}e^{-(t-\tau_V(c))\lambda}d\lambda
 =\frac{e^{-(t-\tau_V(c))\lambda_V(c)}}{t-\tau_V(c)}.
\]
The desired inequality is then given by the replacement of $t$ with $\tau_V(c)+s$.

For (2), the first identity is obvious from the continuity of $\mathcal{L}_V$. For the second inequality, one may use Lemma \ref{l-ibp} to write that, for $r\ge 0$ and $s>0$,
\[
 \mathcal{L}_V(T_V(\epsilon)+r+s)=(T_V(\epsilon)+r+s)\int_{(0,\infty)}V(\lambda)
 e^{-(T_V(\epsilon)+r+s)\lambda}d\lambda.
\]
Note that
\[
 (T_V(\epsilon)+r+s)\int_{(0,\lambda_V(c_1))}V(\lambda)
 e^{-(T_V(\epsilon)+r+s)\lambda}d\lambda\le c_1,
\]
and
\[
 (T_V(\epsilon)+r+s)\int_{[\lambda_V(c_1),\lambda_V(c_2))}V(\lambda)
 e^{-(T_V(\epsilon)+r+s)\lambda}d\lambda\le c_2e^{-(T_V(\epsilon)+r+s)\lambda_V(c_1)},
\]
and
\begin{align}
 \int_{[\lambda_V(c_2),\infty)}V(\lambda)&e^{-(T_V(\epsilon)+r+s)\lambda}d\lambda\notag\\
 \le& e^{-r\lambda_V(c_2)}\int_{[\lambda_V(c_2),\infty)}V(\lambda)
 e^{-(T_V(\epsilon)+s)\lambda}d\lambda.\notag
\end{align}
The desired inequality is then given by adding up the above three bounds and applying the observation of
\begin{align}
 \int_{[\lambda_V(c_2),\infty)}V(\lambda)e^{-(T_V(\epsilon)+s)\lambda}d\lambda
 &\le\int_{(0,\infty)}V(\lambda)e^{-(T_V(\epsilon)+s)\lambda}d\lambda\notag\\
 &=\frac{\mathcal{L}_V(T_V(\epsilon)+s)}{T_V(\epsilon)+s}
 \le\frac{\epsilon}{T_V(\epsilon)+s},\notag
\end{align}
where the second-to-last equality applies Lemma \ref{l-ibp} again.

\section{Techniques for product chains}

Let $(\mu_i,\mathcal{S}_i,L_i,\pi_i)_{i=1}^n$ be irreducible and reversible continuous time finite Markov chains, $(p_i)_{i=1}^n$ be positive constants satisfying $\sum_{i=1}^np_i\le 1$, and $(\mu,\mathcal{S},L,\pi)$ be a continuous time Markov chain with $\mathcal{S}=\mathcal{S}_1\times\cdots\times\mathcal{S}_n$, $\mu=\mu_1\times\cdots\times\mu_n$, $\pi=\pi_1\times\cdots\times\pi_n$ and
\[
 L=\sum_{i=1}^np_iI_1\otimes\cdots\otimes I_{i-1}\otimes L_i\otimes I_{i+1}\otimes\cdots\otimes I_n,
\]
where $I_i$ is the identity matrix indexed by $\mathcal{S}_i$. Let $\lambda_{i,0}=0,\lambda_{i,1},...,\lambda_{i,|\mathcal{S}_i|-1}$ be eigenvalues of $-L_i$ with $L^2(\pi_i)$-orthonormal right eigenvectors $\phi_{i,0}=\mathbf{1},\phi_{i,1},...,\phi_{i,|\mathcal{S}_i|-1}$. Set $\Gamma=\{j=(j_1,...,j_n)|0\le j_i<|\mathcal{S}_i|,\,\forall 1\le i\le n\}$ and, for $J=(j_1,...,j_n)\in\Gamma$, define $\lambda_J=\sum_{i=1}^np_i\lambda_{i,j_i}$ and $\phi_J=\prod_{i=1}^n\phi_{i,j_i}$. It is easy to see that, for $J\in\Gamma$, $\lambda_J$ is an eigenvalue of $-L$ with $L^2(\pi)$-orthonormal right eigenvector $\phi_J$. Write
\begin{equation}\label{eq-varrho}
 \left\{\varrho_l\bigg|1\le l<\prod_{i=1}^n|\mathcal{S}_i|\right\}=\{\lambda_J|J\in\Gamma,J\ne\mathbf{0}\}
\end{equation}
and
\begin{equation}\label{eq-rho}
 \left\{\rho_l\bigg|1\le l\le \sum_{i=1}^n|\mathcal{S}_i|-n\right\}=\{p_i\lambda_{i,j}|1\le j<|\mathcal{S}_i|,\,1\le i\le n\}
\end{equation}
in the way that $\rho_l\le\rho_{l+1}$ and $\varrho_l\le\varrho_{l+1}$. We rearrange $\mu(\phi_J)$'s and $\mu_i(\phi_{i,j})$'s accordingly and write them as $\psi_l$'s and $\varphi_l$'s. Consider the following setting. For $c>0$, set
\begin{equation}\label{eq-jj}
 j(c)=\min\left\{j\ge 1\bigg|\sum_{i=1}^j\psi_j^2>c\right\},\quad
 \widetilde{j}(c)=\min\left\{j\ge 1\bigg|\sum_{i=1}^j\varphi_j^2>c\right\}.
\end{equation}

\begin{lem}\label{l-jtaucomp}
Referring to the setting in \textnormal{(\ref{eq-varrho})}, \textnormal{(\ref{eq-rho})} and \textnormal{(\ref{eq-jj})}, one has
\[
 \varrho_{j(c)}\le\rho_{\widetilde{j}(c)}\le\varrho_{j(e^c-1)},\quad\forall c>0,
\]
where $\min\emptyset:=\infty$.
\end{lem}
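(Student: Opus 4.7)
The plan is to exploit two structural facts: the eigenvalues of $-L$ are additive over components ($\lambda_J = \sum_i p_i\lambda_{i,j_i}$) and the eigenvector coefficients factor as $\mu(\phi_J)^2 = \prod_i \mu_i(\phi_{i,j_i})^2$, with the convention $\mu_i(\phi_{i,0}) = 1$. In particular, each $\rho_l$ of the form $p_i\lambda_{i,j}$ equals some $\varrho_\ell$ corresponding to the multi-index $J$ with a single nonzero entry $j_i = j$, and at that position $\psi_\ell = \varphi_l$. This embedding of the $\rho$-list into the $\varrho$-list is the engine for both inequalities.

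For the first inequality $\varrho_{j(c)} \le \rho_{\widetilde{j}(c)}$, set $T = \rho_{\widetilde{j}(c)}$ and consider the sum $\sum_{\ell:\varrho_\ell \le T} \psi_\ell^2$. Since the $\widetilde{j}(c)$ smallest $\rho_l$'s are all $\le T$, and each of these appears in the $\varrho$-list with the corresponding $\psi = \varphi$, we get $\sum_{\ell:\varrho_\ell \le T}\psi_\ell^2 \ge \sum_{l=1}^{\widetilde{j}(c)}\varphi_l^2 > c$. Because $\varrho$'s are sorted, this sum equals $\sum_{\ell=1}^N \psi_\ell^2$ where $N = \max\{\ell:\varrho_\ell \le T\}$, so $j(c) \le N$ and thus $\varrho_{j(c)} \le \varrho_N \le T$.

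For the second inequality $\rho_{\widetilde{j}(c)} \le \varrho_{j(e^c-1)}$, set $T = \varrho_{j(e^c-1)}$. By definition of $j$, the sum $\sum_{\ell:\varrho_\ell\le T}\psi_\ell^2$ exceeds $e^c - 1$, so adding the $J = \mathbf{0}$ contribution yields
\[
 \sum_{J:\lambda_J \le T}\mu(\phi_J)^2 > e^c.
\]
Now the crucial step: since every $p_i\lambda_{i,j_i}$ is nonnegative, the constraint $\lambda_J \le T$ forces each coordinate to satisfy $p_i\lambda_{i,j_i} \le T$, giving the inclusion of index sets
\[
 \{J:\lambda_J\le T\}\subseteq\prod_{i=1}^n\{j:p_i\lambda_{i,j}\le T\}.
\]
Combined with the factorization of $\mu(\phi_J)^2$, this bounds the sum above by $\prod_i(1+a_i)$ where $a_i = \sum_{j\ge 1,\,p_i\lambda_{i,j}\le T}\mu_i(\phi_{i,j})^2$. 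Applying $\prod_i(1+a_i)\le \exp(\sum_i a_i)$ and recognizing $\sum_i a_i = \sum_{l:\rho_l\le T}\varphi_l^2$, we conclude $\sum_{l:\rho_l\le T}\varphi_l^2 > c$, hence $\widetilde{j}(c) \le \#\{l:\rho_l\le T\}$, giving $\rho_{\widetilde{j}(c)}\le T$.

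The main obstacle is purely bookkeeping: the $\rho$-list and $\varrho$-list are sorted versions of multisets, and one must argue carefully that the embedding of single-component eigenvalues into the product spectrum preserves the pairing with the $\mu$-coefficients. Once the product bound $\prod(1+a_i) \le e^{\sum a_i}$ is identified as the correct convexity tool (which explains the exponential gap $e^c - 1$ versus $c$ in the statement), the rest is immediate. The edge case $\widetilde{j}(c) = \infty$ or $j(e^c-1) = \infty$ is handled by the convention $\min\emptyset = \infty$, making the corresponding inequality vacuous.
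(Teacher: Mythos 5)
Your proof is correct and rests on the same two ideas the paper uses: the embedding of each single-coordinate eigenvalue $p_i\lambda_{i,j}$ into the product spectrum as $\lambda_{je_i}$ with matching $\mu$-coefficient, and the bound $\prod_i(1+a_i)\le e^{\sum_i a_i}$, which is exactly where the $e^c-1$ versus $c$ gap enters. The paper phrases the argument by explicitly constructing vectors $J$ and $J'$ and working with the partial order $\preceq$, while you work with threshold sets $\{\rho_l\le T\}$ and $\{\varrho_\ell\le T\}$; this is a cosmetic reformulation of the same proof, with your version being slightly more streamlined.
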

\begin{proof}
Suppose that $\lambda_{i,j}\le \lambda_{i,j+1}$. Fix $c>0$ and let $J=(J_1,J_2,...,J_n)\in\Gamma$ be a vector such that
\begin{equation}\label{eq-rhol}
 \{\rho_l|1\le l\le \widetilde{j}(c)\}=\bigcup_{i=1}^n\{p_i\lambda_{n,j}|1\le j\le J_i\}.
\end{equation}
Note that $\{\lambda_{j_ie_i}|1\le j_i\le J_i,\,1\le i\le n\}=\{\rho_l|1\le l\le \widetilde{j}(c)\}$, where $e_i$ is a vector with $1$ in the $i$th coordinate and $0$ in the others. This implies that there is an integer $N\ge 1$ such that $\varrho_N=\rho_{\widetilde{j}(c)}$ and
\[
 \{\varphi_l|1\le l\le \widetilde{j}(c)\}\subset\{\psi_l|1\le l\le N\}.
\]
Clearly, one has
\[
 \sum_{l=1}^N\psi_l^2\ge \sum_{l=1}^{\widetilde{j}(c)}\varphi_l^2>c.
\]
As a consequence, this leads to $j(c)\le N$ and then $\varrho_{j(c)}\le\varrho_N=\rho_{\widetilde{j}(c)}$, which proves the first inequality.

For the second inequality, let $J$ be the vector as before. Up to a permutation of $\{\mathcal{S}_i|1\le i\le n\}$, we may assume $J_1\ge 1$ and $p_1\lambda_{1,J_1}=\rho_{\widetilde{j}(c)}$. Set $J'=(J_1',...,I_n')$, where $J_1'=J_1-1$ and $J_i'=J_i$ for $2\le i\le n$. For $\mathcal{I}=(i_1,...,i_n)$ and $\mathcal{J}=(j_1,...,j_n)$, we write $\mathcal{I}\preceq \mathcal{J}$ if $i_k\le j_k$ for all $1\le k\le n$. Using the fact of $\log (1+t)\le t$, one may derive
\begin{align}
 \sum_{\mathcal{J}:\mathbf{0}\preceq\mathcal{J}\preceq J'}\phi_{\mathcal{J}}^2&=\sum_{\mathcal{J}:\mathcal{J}\preceq J'}\phi_{\mathcal{J}}^2-1
 =\prod_{i=1}^n\sum_{j=0}^{J_i'}|\mu_i(\phi_{i,j})|^2-1\notag\\
 &\le\exp\left\{\sum_{i=1}^n\sum_{j=1}^{J_i'}|\mu_i(\phi_{i,j})|^2\right\}-1\le e^c-1.\notag
\end{align}
Further, by the setting in (\ref{eq-rhol}), it is easy to see that $\lambda_{je_i}\ge \rho_{\widetilde{j}(c)}$ for all $j>J_i'$ and $1\le i\le n$. If $\mathcal{I}=(i_1,...,i_n)\npreceq J'$, then there is $1\le k\le n$ such that $i_k>J_k'$ and this implies $\lambda_{\mathcal{I}}\ge\lambda_{i_ke_k}\ge\rho_{\widetilde{j}(c)}$. Consequently, we have $\varrho_{j(e^c-1)}\ge\rho_{\widetilde{j}(c)}$, as desired.
\end{proof}

\end{document}